\theoremstyle{plain}
\newtheorem{thm}{Theorem}[section]
\newtheorem*{thm*}{Theorem}
\newtheorem{prop}[thm]{Proposition}
\newtheorem{lem}[thm]{Lemma}
\newtheorem{cor}[thm]{Corollary}
\theoremstyle{definition}
\newtheorem{ex}[thm]{Example}
\newtheorem{defi}[thm]{Definition}
\theoremstyle{remark}
\newtheorem{rem}[thm]{Remark}
\renewcommand{\epsilon}{\varepsilon}
\newcommand{\ric}{\operatorname{Ric}}
\newcommand{\Hess}{\operatorname{Hess}}
\newcommand{\Lip}{\mathrm{Lip}}
\newcommand{\inte}{\mathrm{Int}\,}
\newcommand{\cut}{\mathrm{Cut}\,}
\newcommand{\bm}{\partial M}
\newcommand{\op}{\operatorname{op}}
\newcommand{\Var}{\operatorname{Var}}
\newcommand{\IR}{\operatorname{InRad}}
\newcommand{\SR}{\operatorname{SpecRad}}
\newcommand{\rarrow}{\rightarrow}
\newcommand{\orarrow}{\overrightarrow}
\newcommand{\olarrow}{\overleftarrow}
\title[Geometric and spectral properties of directed graphs]{Geometric and spectral properties of directed graphs under a lower Ricci curvature bound}
\author{Ryunosuke Ozawa}
\address{Advanced Institute for Materials Research (AIMR), Tohoku University, 2-1-1 Katahira, Aoba-ku, Sendai, 980-8577, Japan}
\email{ryunosuke.ozawa.b8@tohoku.ac.jp}
\author{Yohei Sakurai}
\address{Advanced Institute for Materials Research (AIMR), Tohoku University, 2-1-1 Katahira, Aoba-ku, Sendai, 980-8577, Japan}
\email{yohei.sakurai.e2@tohoku.ac.jp}
\author{Taiki Yamada}
\address{Research Institute for Humanity and Nature, 457-4 Motoyama, Kamigamo, Kita-ku, Kyoto, 603-8047, Japan}
\email{t.yamada@chikyu.ac.jp}
\subjclass[2010]{Primary 05C20, 05C12, 05C81, 53C21, 53C23}
\keywords{Directed graph; Ricci curvature; Comparison geometry; Eigenvalue of Laplacian}
\date{June 21, 2020}
\begin{document}
\maketitle

\begin{abstract}
For undirected graphs,
the Ricci curvature introduced by Lin-Lu-Yau has been widely studied from various perspectives,
especially geometric analysis.
In the present paper,
we discuss generalization problem of their Ricci curvature for directed graphs.
We introduce a new generalization for strongly connected directed graphs by using the mean transition probability kernel
which appears in the formulation of the Chung Laplacian.
We conclude several geometric and spectral properties under a lower Ricci curvature bound extending previous results in the undirected case.
\end{abstract}

\section{Introduction}

Ricci curvature is one of the most fundamental objects in Riemannian geometry.
Based on a geometric observation on (smooth) Riemannian manifolds,
Ollivier \cite{O} has introduced the coarse Ricci curvature for (non-smooth) metric spaces by means of the Wasserstein distance which is an essential tool in optimal transport theory.
Modifying the formulation in \cite{O},
Lin-Lu-Yau \cite{LLY} have defined the Ricci curvature for undirected graphs.
It is well-known that
a lower Ricci curvature bound of Lin-Lu-Yau \cite{LLY} implies various geometric and analytic properties (see e.g., \cite{BJL}, \cite{BRT}, \cite{JL}, \cite{LLY}, \cite{MW}, \cite{P}, and so on).

There have been some attempts to generalize the Ricci curvature of Lin-Lu-Yau \cite{LLY} for directed graphs.
The third author \cite{Y1} has firstly proposed a generalization of their Ricci curvature (see Remark \ref{rem:various directed Ricci curvature} for its precise definition).
He computed it for some concrete examples, and given several estimates.
Eidi-Jost \cite{EJ} have recently introduced another formulation (see Remark \ref{rem:various directed Ricci curvature}).
They have applied it to the study of directed hypergraphs.

We are now concerned with the following question:
What is the suitable generalization of the Ricci curvature of Lin-Lu-Yau \cite{LLY} for strongly connected directed graphs?
In this paper,
we provide a new Ricci curvature for such directed graphs,
examine its basic properties,
and conclude several geometric and analytic properties under a lower Ricci curvature bound.
Our formulation is as follows (more precisely, see Section \ref{sec:Preliminaries} and Subsection \ref{sec:Definition of Ricci curvature}):
Let $(V,\mu)$ denote a simple,
strongly connected,
finite weighted directed graph,
where $V$ is the vertex set,
and $\mu:V\times V\to [0,\infty)$ is the (non-symmetric) edge weight.
For the transition probability kernel $P:V\times V\to [0,1]$,
we consider the \textit{mean transition probability kernel} $\mathcal{P}:V\times V\to [0,1]$ defined as
\begin{equation*}\label{eq:perron vector}
\mathcal{P}(x,y):=\frac{1}{2}\left(P(x,y)+\frac{\mathfrak{m}(y)}{\mathfrak{m}(x)}P(y,x)\right),
\end{equation*}
where $\mathfrak{m}:V\to (0,1]$ is the so-called \textit{Perron measure} on $V$.
We denote by $d:V\times V\to [0,\infty)$ the (non-symmetric) distance function on $V$,
and by $W$ the associated Wasserstein distance.
For $x,y\in V$ with $x\neq y$,
we define the \textit{Ricci curvature $\kappa(x,y)$} by
\begin{equation*}
\kappa(x,y):=\lim_{\epsilon\to 0} \frac{1}{\epsilon}\left(  1-\frac{W(\nu^{\epsilon}_{x},\nu^{\epsilon}_{y})}{d(x,y)} \right),
\end{equation*}
where $\nu^{\epsilon}_{x}:V\to [0,1]$ is a probability measure on $V$ defined as
\begin{equation*}
\nu^{\epsilon}_{x}(z):=\begin{cases}
                        1-\epsilon   & \text{if $z= x$},\\
                        \epsilon\,\mathcal{P}(x,z)  & \text{if $z \neq x$}.
\end{cases}
\end{equation*}
In the undirected case (i.e., the weight $\mu$ is symmetric),
our Ricci curvature coincides with that of Lin-Lu-Yau \cite{LLY}.
We also note that the third author \cite{Y1} and Eidi-Jost \cite{EJ} have used different probability measures from $\nu^{\epsilon}_{x},\nu^{\epsilon}_{y}$ to define their Ricci curvatures (see Remark \ref{rem:various directed Ricci curvature}).

One of remarkable features of our Ricci curvature is that
it controls the behavior of the symmetric Laplacian introduced by Chung \cite{C1}, \cite{C2}.
Here we recall that
the Chung Laplacian $\mathcal{L}$ is defined as
\begin{equation*}
\mathcal{L}f(x):=f(x)-\sum_{y\in V} \mathcal{P}(x,y)f(y)
\end{equation*}
for a function $f:V\to \mathbb{R}$.
For instance,
we will derive lower bounds of the spectrum of the Chung Laplacian $\mathcal{L}$ under a lower Ricci curvature bound (see Theorems \ref{thm:main theorem} and \ref{thm:LLY comparison}).

\subsection{Main results and organization}\label{sec:Main results and organization}
In Section \ref{sec:Preliminaries},
we prepare some notations,
recall basic facts on directed graphs (see Subsections \ref{sec:Directed graphs} and \ref{sec:Laplacians}),
and optimal transport theory (see Subsection \ref{sec:Optimal transport theory}).
In Section \ref{sec:Ricci curvature},
we define our Ricci curvature (see Subsection \ref{sec:Definition of Ricci curvature}),
and examine the relation with the Chung Laplacian $\mathcal{L}$ (see Subsection \ref{sec:Ricci curvature and Laplacian}).
In Section \ref{sec:Examples},
we calculate our Ricci curvature for some concrete examples.
In Section \ref{sec:Products of directed graphs},
we further calculate it for the weighted Cartesian product of directed graphs.
In Section \ref{sec:Estimates of Ricci curvature},
we provide its upper and lower bounds.
In Section \ref{sec:Curvature-dimension conditions},
we will discuss the relation with the curvature-dimension inequality of Bakry-\'Emery type determined by the Chung Laplacian $\mathcal{L}$.

In Section \ref{sec:Comparison geometric results},
we prove several comparison geometric results under a lower Ricci curvature bound.
First,
we will extend the eigenvalue comparison of Lichnerowicz type,
and the diameter comparison of Bonnet-Myers type that have been obtained by Lin-Lu-Yau \cite{LLY} in the undirected case to our directed case (see Subsections \ref{sec:Eigenvalue comparisons} and \ref{sec:Diameter comparisons}).
Next,
we will generalize the volume comparison of Bishop type that has been established by Paeng \cite{P} in the undirected case (see Subsection \ref{sec:Volume comparisons}).
Further,
we will extend the Laplacian comparison for the distance function from a single vertex that has been established by M\"unch-Wojciechowski \cite{MW} in the undirected case (see Subsection \ref{sec:Laplacian comparisons}).

To formulate our comparison geometric results,
we introduce a notion of the asymptotic mean curvature around each vertex as follows:
For $x\in V$,
we define the \textit{asymptotic mean curvature $\mathcal{H}_{x}$ around $x$} by
\begin{equation*}
\mathcal{H}_{x}:=\mathcal{L} \rho_{x}(x),
\end{equation*}
where $\rho_{x}:V\to \mathbb{R}$ is the distance function from $x$ defined as $\rho_{x}(y):=d(x,y)$ (see Remark \ref{rem:formulation of asymptotic mean curvature} for the reason why we call it the asymptotic mean curvature).
It holds that $\mathcal{H}_{x}\leq -1$ in general.
In the undirected case,
we always have $\mathcal{H}_{x}=-1$;
in particular,
this notion plays an essential role in the case where $(V,\mu)$ is not undirected.

On Riemannian manifolds with a lower Ricci curvature bound,
it is well-known that
several comparison geometric results hold for hypersurfaces with a mean curvature bound (see the pioneering work of Heintze-Karcher \cite{HK}, and see e.g., \cite{Ba}, \cite{M}, \cite{Mo}).
In a spirit of Heintze-Karcher comparison,
for instance,
we formulate our Laplacian comparison as follows:
\begin{thm}\label{thm:Laplacian comparison}
Let $x\in V$.
For $K\in \mathbb{R}$
we assume $\inf_{y\in V\setminus \{x\}}\kappa(x,y) \geq K$.
For $\Lambda \in (-\infty,-1]$
we further assume $\mathcal{H}_{x}\geq \Lambda$.
Then on $V\setminus \{x\}$,
we have
\begin{equation}\label{eq:Laplacian comparison}
\mathcal{L} \rho_{x} \geq K \rho_{x}+\Lambda.
\end{equation}
\end{thm}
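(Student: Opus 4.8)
The plan is to use the distance function $\rho_{x}$ itself as a competitor in the Kantorovich dual for the Wasserstein distance $W$, exploiting that $\rho_{x}$ realizes the gradient bound along every pair $(x,y)$. Only a one-sided bound from a single test function is needed, so no delicate interchange of limit and supremum arises.

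First I would record the action of $\mathcal{L}$ on the measures $\nu^{\epsilon}_{z}$. For any $\phi:V\to\mathbb{R}$ and any $z\in V$, since the graph is simple (so $\mathcal{P}(z,z)=0$) and $\mathcal{P}(z,\cdot)$ is a probability distribution on $V\setminus\{z\}$, a direct computation gives
\begin{equation*}
\sum_{w\in V}\phi(w)\,\nu^{\epsilon}_{z}(w)=(1-\epsilon)\phi(z)+\epsilon\sum_{w\neq z}\mathcal{P}(z,w)\phi(w)=\phi(z)-\epsilon\,\mathcal{L}\phi(z).
\end{equation*}
Hence, by the dual formulation of $W$ with respect to the non-symmetric distance $d$, for every $1$-Lipschitz function $\phi$ one has the lower bound
\begin{equation*}
W(\nu^{\epsilon}_{x},\nu^{\epsilon}_{y})\geq \sum_{w\in V}\phi(w)\bigl(\nu^{\epsilon}_{y}(w)-\nu^{\epsilon}_{x}(w)\bigr)=\bigl(\phi(y)-\phi(x)\bigr)-\epsilon\bigl(\mathcal{L}\phi(y)-\mathcal{L}\phi(x)\bigr).
\end{equation*}

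Next I would specialize to $\phi=\rho_{x}$. The triangle inequality $d(x,w)\le d(x,z)+d(z,w)$ shows that $\rho_{x}$ is $1$-Lipschitz for $d$, and moreover $\rho_{x}(y)-\rho_{x}(x)=d(x,y)-0=d(x,y)$, so $\rho_{x}$ attains the gradient bound along the pair $(x,y)$. Substituting into the displayed lower bound, dividing by $d(x,y)>0$, and using that the leading term cancels exactly, I obtain
\begin{equation*}
\frac{1}{\epsilon}\left(1-\frac{W(\nu^{\epsilon}_{x},\nu^{\epsilon}_{y})}{d(x,y)}\right)\le \frac{\mathcal{L}\rho_{x}(y)-\mathcal{L}\rho_{x}(x)}{d(x,y)}.
\end{equation*}
Letting $\epsilon\to 0$ and recalling the definitions of $\kappa(x,y)$ and of $\mathcal{H}_{x}=\mathcal{L}\rho_{x}(x)$ yields the pointwise estimate $\mathcal{L}\rho_{x}(y)\ge \kappa(x,y)\,d(x,y)+\mathcal{H}_{x}$; equivalently, I could simply invoke the curvature--Laplacian comparison of Subsection \ref{sec:Ricci curvature and Laplacian} applied to the admissible function $\rho_{x}$. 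Finally, inserting the hypotheses $\kappa(x,y)\ge K$ and $\mathcal{H}_{x}\ge \Lambda$ gives $\mathcal{L}\rho_{x}(y)\ge K\,d(x,y)+\Lambda=K\rho_{x}(y)+\Lambda$ for every $y\in V\setminus\{x\}$, which is \eqref{eq:Laplacian comparison}.

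The hard part is not the algebra but getting the directed duality right: because $d$ is non-symmetric one must fix the orientation of the Kantorovich dual so that $\rho_{x}$ is genuinely $1$-Lipschitz and so that the term $\phi(y)-\phi(x)$ carries the sign making the leading order cancel. Once the competitor is chosen to be $\rho_{x}$, the limit $\epsilon\to 0$ is harmless, since only a lower bound on $W$ from one element of the supremum is used and no limit--supremum interchange is required.
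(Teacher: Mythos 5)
Your proposal is correct and takes essentially the same route as the paper: both arguments use $\rho_{x}$ as the admissible test function witnessing $\kappa(x,y)\leq \nabla_{xy}\mathcal{L}\rho_{x}$, and then insert the hypotheses $\kappa(x,y)\geq K$ and $\mathcal{H}_{x}=\mathcal{L}\rho_{x}(x)\geq\Lambda$. The only difference is presentational: the paper invokes Theorem \ref{thm:limit free formula} for that inequality, whereas you re-derive its (easy) one-sided half directly from the Kantorovich duality of Proposition \ref{prop:Kantorovich duality} together with Lemma \ref{lem:integration property of random walk}, which is precisely Lemma \ref{lem:implication of Kantrovich duality} specialized to $f=\rho_{x}$.
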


M\"unch-Wojciechowski \cite{MW} have established Theorem \ref{thm:Laplacian comparison} in the undirected case (see Theorem 4.1 in \cite{MW}).
Here we emphasize that
in the undirected case,
the lower asymptotic mean curvature bound has not been supposed since $\mathcal{H}_{x}=-1$ in that case.
We compare Theorem \ref{thm:Laplacian comparison} with a similar comparison result on Riemannian manifolds (see Subsection \ref{sec:Laplacian comparisons}).

In Section \ref{sec:Dirichlet eigenvalues of p-Laplacian},
we study the Dirichlet eigenvalues of the $p$-Laplacian $\mathcal{L}_{p}$ defined by
\begin{equation*}
\mathcal{L}_{p}f(x):=\sum_{y\in V} \vert f(x)-f(y) \vert^{p-2} (f(x)-f(y))\mathcal{P}(x,y)
\end{equation*}
for $p\in (1,\infty)$,
where we notice that $\mathcal{L}_{2}=\mathcal{L}$.
For a non-empty subset $\mathcal{V}$ of $V$ with $\mathcal{V}\neq V$,
let $\lambda^{D}_{p}(\mathcal{V})$ stand for the smallest Dirichlet eigenvalue over $\mathcal{V}$ (more precisely, see Subsection \ref{sec:Dirichlet p-Poincare constants}).
We first prove an inequality of Cheeger type for $\lambda^{D}_{p}(\mathcal{V})$ (see Subsection \ref{sec:Cheeger inequalities}).
For $x\in V$,
the \textit{inscribed radius $\IR_{x} V$ of $V$ at $x$} is defined by
\begin{equation*}\label{eq:inscribed radius}
\IR_{x} V:=\sup_{y\in V}\rho_{x}(y).
\end{equation*}
Combining the Cheeger inequality and Theorem \ref{thm:Laplacian comparison},
we obtain the following lower bound of the Dirichlet eigenvalue over the outside of a metric ball under our lower curvature bounds,
and an upper inscribed radius bound:
\begin{thm}\label{thm:main theorem}
Let $x\in V$ and $p\in (1,\infty)$.
For $K\in \mathbb{R}$
we assume $\inf_{y\in V\setminus \{x\}}\kappa(x,y) \geq K$.
For $\Lambda \in (-\infty,-1]$
we also assume $\mathcal{H}_{x}\geq \Lambda$.
For $D>0$
we further assume $\IR_{x} V\leq D$.
Then for every $R \geq 1$ with $KR+\Lambda>0$,
we have
\begin{equation}\label{eq:main theorem}
\lambda^{D}_{p}(E_{R}(x))\geq \frac{2^{p-1}}{p^{p}} \left(\frac{K R+\Lambda}{D}\right)^{p},
\end{equation}
where $E_{R}(x):=\{y\in V \mid \rho_{x}(y)\geq R\}$.
\end{thm}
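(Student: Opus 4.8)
The plan is to combine the Cheeger-type inequality established in Subsection \ref{sec:Cheeger inequalities} with the Laplacian comparison of Theorem \ref{thm:Laplacian comparison}. The Cheeger inequality should control $\lambda^{D}_{p}(\mathcal{V})$ from below by $2^{p-1}/p^{p}$ times the $p$-th power of a Dirichlet Cheeger constant $h^{D}(\mathcal{V})$; our task therefore reduces to producing the geometric lower bound $h^{D}(E_{R}(x)) \geq (KR+\Lambda)/D$, after which \eqref{eq:main theorem} follows immediately by inserting this into the Cheeger inequality. I first observe that the hypotheses are nonvacuous only when $K>0$: since $R\geq 1$ and $\Lambda\leq -1$, the requirement $KR+\Lambda>0$ forces $KR>-\Lambda\geq 1$, hence $K>0$. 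Consequently, on $E_{R}(x)=\{y : \rho_{x}(y)\geq R\}$ the comparison \eqref{eq:Laplacian comparison} yields the uniform bound $\mathcal{L}\rho_{x}(y)\geq K\rho_{x}(y)+\Lambda\geq KR+\Lambda>0$.

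The core step is the estimate of the Dirichlet Cheeger constant. Fix any nonempty $\Omega\subseteq E_{R}(x)$ and sum the pointwise inequality against the Perron measure:
\begin{equation*}
(KR+\Lambda)\,\mathfrak{m}(\Omega) \leq \sum_{y\in\Omega}\mathcal{L}\rho_{x}(y)\,\mathfrak{m}(y) = \sum_{y\in\Omega}\sum_{z\in V}\bigl(\rho_{x}(y)-\rho_{x}(z)\bigr)\mathcal{P}(y,z)\,\mathfrak{m}(y),
\end{equation*}
where the last equality uses $\sum_{z}\mathcal{P}(y,z)=1$. Splitting the inner sum according to whether $z\in\Omega$ or $z\notin\Omega$, the reversibility identity $\mathfrak{m}(y)\mathcal{P}(y,z)=\mathfrak{m}(z)\mathcal{P}(z,y)$ (which is exactly what makes $\mathcal{L}$ self-adjoint with respect to $\mathfrak{m}$) turns the edges internal to $\Omega$ into an antisymmetric double sum over a symmetric index set, which therefore vanishes. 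Hence only the boundary edges survive:
\begin{equation*}
(KR+\Lambda)\,\mathfrak{m}(\Omega) \leq \sum_{y\in\Omega}\sum_{z\notin\Omega}\bigl(\rho_{x}(y)-\rho_{x}(z)\bigr)\mathcal{P}(y,z)\,\mathfrak{m}(y).
\end{equation*}

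On each boundary edge with $y\in\Omega\subseteq E_{R}(x)$ and $z\notin\Omega$, the inscribed-radius bound $\IR_{x} V\leq D$ gives $\rho_{x}(y)\leq D$, while $\rho_{x}(z)\geq 0$, so $\rho_{x}(y)-\rho_{x}(z)\leq D$. Therefore
\begin{equation*}
(KR+\Lambda)\,\mathfrak{m}(\Omega) \leq D\sum_{y\in\Omega}\sum_{z\notin\Omega}\mathcal{P}(y,z)\,\mathfrak{m}(y) = D\,\lvert\partial\Omega\rvert,
\end{equation*}
where $\lvert\partial\Omega\rvert=\sum_{y\in\Omega,\,z\notin\Omega}\mathcal{P}(y,z)\mathfrak{m}(y)$. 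This rearranges to $\lvert\partial\Omega\rvert/\mathfrak{m}(\Omega)\geq (KR+\Lambda)/D$, and taking the infimum over $\Omega\subseteq E_{R}(x)$ yields $h^{D}(E_{R}(x))\geq (KR+\Lambda)/D$.

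The steps invoking the comparison theorem and the inscribed-radius bound are routine once the summation by parts is set up, so I expect the main obstacle to be matching conventions: verifying that the Dirichlet Cheeger constant $h^{D}$ appearing in the Cheeger inequality of Subsection \ref{sec:Cheeger inequalities} is indeed the boundary-to-volume ratio $\lvert\partial\Omega\rvert/\mathfrak{m}(\Omega)$ with the boundary measure above, and confirming the precise constant $2^{p-1}/p^{p}$ there. The positivity $KR+\Lambda>0$ is genuinely used twice — to guarantee the uniform sign of $\mathcal{L}\rho_{x}$ on $E_{R}(x)$, and to ensure that the Cheeger constant bound is nonnegative so that raising it to the $p$-th power preserves the inequality.
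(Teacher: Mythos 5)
Your proposal is correct and follows essentially the same route as the paper: Proposition \ref{prop:Dirichlet Cheeger inequality} combined with the isoperimetric bound $\mathcal{I}^{D}_{E_{R}(x)}\geq (KR+\Lambda)/D$, which the paper proves as Proposition \ref{prop:isoperimetric inequality} using Theorem \ref{thm:Laplacian comparison} and the bound $\rho_{x}(y)-\rho_{x}(z)\leq \rho_x(y)\leq D$ on boundary edges. Your inline summation-by-parts (killing the interior double sum via the symmetry $\mathfrak{m}_{yz}=\mathfrak{m}_{zy}$) is just a re-derivation of the special case $f_{1}\equiv 1$ of the paper's Proposition \ref{prop:integration by parts}, and your Cheeger constant coincides with the paper's $\mathcal{I}^{D}$, so the arguments match step for step.
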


We stress that
Theorem \ref{thm:main theorem} is new even in the undirected case (cf. Remark \ref{rem:isop Riem graph}).

\subsection*{{\rm Acknowledgements}}
The authors are grateful to the anonymous referees for valuable comments.
The first author was supported in part by JSPS KAKENHI (19K14532).
The first and second authors were supported in part by JSPS Grant-in-Aid for Scientific Research on Innovative Areas ``Discrete Geometric Analysis for Materials Design" (17H06460).

\section{Preliminaries}\label{sec:Preliminaries}
In this section,
we review basics of directed graphs.
We refer to \cite{G} for the notation and basics of the theory of undirected graph. 

\subsection{Directed graphs}\label{sec:Directed graphs}
Let $(G,\mu)$ be a finite weighted directed graph,
namely,
$G=(V,E)$ is a finite directed graph,
and $\mu:V \times V\to [0,\infty)$ is a function such that $\mu(x,y)>0$ if and only if $x\rightarrow y$,
where $x\rightarrow y$ means that $(x,y) \in E$.
We will denote by $n$ the cardinality of $V$.
The function $\mu$ is called the \textit{edge weight},
and we write $\mu(x,y)$ by $\mu_{xy}$.
We notice that
$(G,\mu)$ is undirected if and only if $\mu_{xy}=\mu_{yx}$ for all $x,y\in V$,
and simple if and only if $\mu_{xx}=0$ for all $x\in V$.
For $x\in V$ and $\Omega \subset V$
we set
\begin{equation*}
\mu(x):=\sum_{y\in V}\mu_{xy},\quad \mu(\Omega):=\sum_{x\in \Omega}\mu(x).
\end{equation*}
We also note that
$(G,\mu)$ has no isolated points if and only if $\mu(x)>0$ for all $x\in V$.
The weighted directed graph can be denoted by $(V,\mu)$ since $\mu$ contains full information of $E$.
Thus in this paper,
we use $(V,\mu)$ instead of $(G,\mu)$. 

For $x \in V$, 
its \textit{outer neighborhood $N_{x}$}, \textit{inner one $\olarrow{N}_{x}$}, and \textit{neighborhood $\mathcal{N}_{x}$} are defined as
\begin{equation}\label{eq:neighborhoods}
N_{x}:= \left\{y \in V \mid x\rightarrow y \right\},\quad \olarrow{N}_{x}:= \left\{y \in V \mid y \rightarrow x \right\},\quad \mathcal{N}_{x}:=N_{x} \cup \olarrow{N}_{x},
\end{equation}
respectively.
Its \textit{outer degree $\orarrow{\deg}(x)$} and \textit{inner degree $\olarrow{\deg}(x)$} are defined as the cardinality of $N_{x}$ and $\olarrow{N}_{x}$,
respectively.
We say that
$(V,\mu)$ is \textit{unweighted} if $\mu_{xy}=1$ whenever $x\rightarrow y$,
and then $\mu(x)=\orarrow{\deg}(x)$ for all $x\in V$.
In the unweighted case,
$(V,\mu)$ is said to be \textit{Eulerian} if $\orarrow{\deg}(x)=\olarrow{\deg}(x)$ for all $x\in V$.
An Eulerian graph is called \textit{regular} if $\orarrow{\deg}(x)$ (or equivalently, $\olarrow{\deg}(x)$) does not depend on $x$.
Furthermore,
for $r\geq 1$,
a regular graph is called \textit{$r$-regular} if we possess $\orarrow{\deg}(x)=r$ (or equivalently, $\olarrow{\deg}(x)=r$) for all $x\in V$.

For $x,y\in V$,
a sequence $\left\{x_{i} \right\}_{i=0}^{l}$ of vertexes is called a \textit{directed path} from $x$ to $y$ if $x_{i}\rarrow x_{i+1}$ for all $i=0,\dots,l-1$.
The number $l$ is called its length.
Furthermore,
$(V,\mu)$ is called \textit{strongly connected} if
for all $x,y\in V$,
there exists a directed path from $x$ to $y$.
Notice that
if $(V,\mu)$ is strongly connected,
then it has no isolated points.
For strongly connected $(V,\mu)$,
the (non-symmetric) distance function $d:V\times V\to [0,\infty)$ is defined as follows:
$d(x,y)$ is defined to be the minimum of the length of directed paths from $x$ to $y$.
For a fixed $x\in V$,
the \textit{distance function $\rho_{x}:V\to \mathbb{R}$},
and the \textit{reverse distance function $\olarrow{\rho}_{x}:V\to \mathbb{R}$ from $x$} are defined as
\begin{equation}\label{eq:distance function from a single point}
\rho_{x}(y):=d(x,y),\quad \olarrow{\rho}_{x}(y):=d(y,x).
\end{equation}
We further define the \textit{inscribed radius $\IR_{x} V$ of $V$ at $x$} by
\begin{equation}\label{eq:inscribed radius}
\IR_{x} V:=\sup_{y\in V}\rho_{x}(y).
\end{equation}
For $L>0$,
a function $f:V\to \mathbb{R}$ is said to be \textit{$L$-Lipschitz} if
\begin{equation*}
f(y)- f(x) \leq L\, d(x,y)
\end{equation*}
for all $x,y\in V$.
We remark that
$\rho_{x}$ is $1$-Lipschitz,
but $\olarrow{\rho}_{x}$ is not always $1$-Lipschitz.
Let $\Lip_{L}(V)$ stand for the set of all $L$-Lipschitz functions on $V$.

\begin{rem}
The non-symmetric distance function also appears in the Finsler geometry (see e.g., \cite{BCS}, \cite{Sh}).
We refer to \cite{Oh1}, \cite{Oh2} for the notation and terminology concerning the distance.
\end{rem}

\subsection{Laplacian}\label{sec:Laplacians}
Let $(V,\mu)$ be a strongly connected, finite weighted directed graph.
We recall the formulation of the Laplacian on $(V,\mu)$ introduced by Chung \cite{C1}, \cite{C2}.
The \textit{transition probability kernel} $P:V\times V\to [0,1]$ is defined as
\begin{equation}\label{eq:Markov kernel}
P(x,y):=\frac{\mu_{xy}}{\mu(x)},
\end{equation}
which is well-defined since $(V,\mu)$ has no isolated points.
Since $(V,\mu)$ is finite and strongly connected,
the Perron-Frobenius theorem implies that
there exists a unique (up to scaling) positive function $m:V\to (0,\infty)$ such that
\begin{equation}\label{eq:Perron Frobenius}
m(x)=\sum_{y\in V}m(y)P(y,x).
\end{equation}
A probability measure $\mathfrak{m}:V\to (0,1]$ on $V$ satisfying (\ref{eq:Perron Frobenius}) is called the \textit{Perron measure}.
For a non-empty subset $\Omega \subset V$,
its \textit{measure} is defined as
\begin{equation}\label{eq:measure}
\mathfrak{m}(\Omega):=\sum_{x\in \Omega}\mathfrak{m}(x).
\end{equation}

\begin{rem}\label{rem:concrete perron measure}
When $(V,\mu)$ is undirected or Eulerian,
the Perron measure $\mathfrak{m}$ is given by
\begin{equation}\label{eq:concrete perron measure}
\mathfrak{m}(x)=\frac{\mu(x)}{\mu(V)};
\end{equation}
in particular,
if $(V,\mu)$ is a regular graph,
then $\mathfrak{m}(x)=1/n$ for all $x\in V$ (see Examples 1, 2, 3 in \cite{C1}).
Here we recall that
$n$ is the cardinality of $V$.
\end{rem}

We denote by $\mathfrak{m}$ the Perron measure.
We define the \textit{reverse transition probability kernel} $\olarrow{P}:V\times V\to [0,1]$,
and the \textit{mean transition probability kernel} $\mathcal{P}:V\times V\to [0,1]$ by
\begin{equation}\label{eq:perron vector}
\olarrow{P}(x,y):=\frac{\mathfrak{m}(y)}{\mathfrak{m}(x)}P(y,x),\quad \mathcal{P}:=\frac{1}{2}(P+\olarrow{P}).
\end{equation}

\begin{rem}\label{rem:Euler mean trans}
For later convenience,
we notice the formula of the mean transition probability kernel in the case where $(V,\mu)$ is Eulerian.
When $(V,\mu)$ is Eulerian,
we deduce
\begin{equation*}
P(x,y)= \begin{cases}
		\cfrac{1}{\orarrow{\deg}(x)} & \text{if $x\rightarrow y$},\\
		0 & \text{otherwise},
		\end{cases}\quad
\olarrow{P}(x,y) = \begin{cases}
		\cfrac{1}{\orarrow{\deg}(x)} & \text{if $y\rightarrow x$},\\
		0 & \text{otherwise}
		\end{cases}
\end{equation*}
from (\ref{eq:concrete perron measure}).
Therefore
we possess
\begin{equation}\label{eq:Euler mean trans}
\mathcal{P}(x,y)= \begin{cases}
		\cfrac{1}{\orarrow{\deg}(x)} & \text{if $x\rightarrow y$ and $y\rightarrow x$},\\
		\cfrac{1}{2\,\orarrow{\deg}(x)} & \text{if either $x\rightarrow y$ or $y\rightarrow x$},\\
		0 & \text{otherwise}.
		\end{cases}
\end{equation}
\end{rem}

Let $\mathcal{F}$ stand for the set of all functions on $V$.
Chung \cite{C1}, \cite{C2} has introduced the following \textit{(positive, normalized) Laplacian} $\mathcal{L}:\mathcal{F}\to \mathcal{F}$ on $(V,\mu)$:
\begin{equation}\label{eq:Chung Laplacian}
\mathcal{L}f(x):=f(x)-\sum_{y\in V} \mathcal{P}(x,y)f(y).
\end{equation}
We will also use the \textit{negative Laplacian} $\Delta:\mathcal{F}\to \mathcal{F}$ defined by
\begin{equation}\label{eq:negative Laplacian}
\Delta:=-\mathcal{L}.
\end{equation}

The \textit{inner product} and the \textit{norm} on $\mathcal{F}$ are defined by
\begin{equation*}
(f_{0},f_{1}):=\sum_{x\in V} f_{0}(x)f_{1}(x)\mathfrak{m}(x),\quad \Vert f \Vert:=(f,f)^{1/2},
\end{equation*}
respectively.
We define a function $\mathfrak{m}:V\times V \to [0,\infty)$ by
\begin{equation}\label{eq:symmetric edge weight}
\mathfrak{m}(x,y):=\frac{1}{2}(\mathfrak{m}(x)P(x,y)+\mathfrak{m}(y)P(y,x))=\mathfrak{m}(x)\mathcal{P}(x,y).
\end{equation}
We write $\mathfrak{m}(x,y)$ by $\mathfrak{m}_{xy}$.
The following basic properties hold:
(1) $\mathfrak{m}_{xy}=\mathfrak{m}_{yx}$;
(2) $\mathfrak{m}_{xy}>0$ if and only if $y\in \mathcal{N}_{x}$ (or equivalently, $x\in \mathcal{N}_{y}$);
(3) $\mathcal{P}(x,y)=\mathfrak{m}_{xy}/\mathfrak{m}(x)$.

We also have the following integration by parts formula,
which can be proved by the same calculation as in the proof of Theorem 2.1 in \cite{G}:
\begin{prop}\label{prop:integration by parts}
Let $\Omega \subset V$ be a non-empty subset.
Then for all $f_{0},f_{1}:V\to \mathbb{R}$,
\begin{align*}
\sum_{x\in \Omega}\mathcal{L}f_{0}(x)f_{1}(x)\mathfrak{m}(x)
&=\frac{1}{2}\sum_{x,y\in \Omega}(f_{0}(y)-f_{0}(x))(f_{1}(y)-f_{1}(x))\mathfrak{m}_{xy}\\
&\quad -\sum_{x\in \Omega}\sum_{y\in V\setminus \Omega}(f_{0}(y)-f_{0}(x))f_{1}(x)\mathfrak{m}_{xy}.
\end{align*}
In particular,
\begin{equation*}
(\mathcal{L}f_{0},f_{1})=\frac{1}{2} \sum_{x,y\in V}(f_{0}(y)-f_{0}(x))(f_{1}(y)-f_{1}(x))\mathfrak{m}_{xy}=(f_{0},\mathcal{L}f_{1}).
\end{equation*}
\end{prop}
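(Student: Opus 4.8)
The plan is to reduce the left-hand side to a single double sum against the symmetric weight $\mathfrak{m}_{xy}$ and then to symmetrize. First I would rewrite the Chung Laplacian in ``difference form''. Since $\mathcal{P}(x,\cdot)$ is a probability kernel---indeed $\sum_{y\in V}\mathcal{P}(x,y)=1$, which follows from $\sum_{y}P(x,y)=1$ together with the Perron-Frobenius identity (\ref{eq:Perron Frobenius}) applied to $\olarrow{P}$---I can insert $f_{0}(x)=\sum_{y\in V}\mathcal{P}(x,y)f_{0}(x)$ into (\ref{eq:Chung Laplacian}) to obtain
\begin{equation*}
\mathcal{L}f_{0}(x)=-\sum_{y\in V}\mathcal{P}(x,y)(f_{0}(y)-f_{0}(x)).
\end{equation*}

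Next I would multiply by $f_{1}(x)\mathfrak{m}(x)$, sum over $x\in\Omega$, and use property (3) following (\ref{eq:symmetric edge weight}), namely $\mathfrak{m}(x)\mathcal{P}(x,y)=\mathfrak{m}_{xy}$, to convert the transition kernel into the symmetric weight. This gives
\begin{equation*}
\sum_{x\in\Omega}\mathcal{L}f_{0}(x)f_{1}(x)\mathfrak{m}(x)=-\sum_{x\in\Omega}\sum_{y\in V}(f_{0}(y)-f_{0}(x))f_{1}(x)\mathfrak{m}_{xy}.
\end{equation*}
Splitting the inner sum according to whether $y\in\Omega$ or $y\in V\setminus\Omega$ isolates precisely the boundary term $-\sum_{x\in\Omega}\sum_{y\in V\setminus\Omega}(f_{0}(y)-f_{0}(x))f_{1}(x)\mathfrak{m}_{xy}$ appearing in the statement.

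The main step is then to handle the remaining ``interior'' double sum over $\Omega\times\Omega$. Here I would exploit the symmetry $\mathfrak{m}_{xy}=\mathfrak{m}_{yx}$ (property (1) following (\ref{eq:symmetric edge weight})), which is the structural reason the mean kernel $\mathcal{P}$ was introduced in the first place. Writing the interior sum twice---once as it stands and once after interchanging the dummy indices $x$ and $y$---and averaging the two, the weight is left unchanged while the integrand becomes $(f_{0}(x)-f_{0}(y))(f_{1}(x)-f_{1}(y))$; this produces exactly $\frac{1}{2}\sum_{x,y\in\Omega}(f_{0}(y)-f_{0}(x))(f_{1}(y)-f_{1}(x))\mathfrak{m}_{xy}$, completing the formula. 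I do not anticipate a genuine obstacle, since the computation is routine; the one point demanding care is bookkeeping of signs during the symmetrization, as both the factor $\tfrac{1}{2}$ and the product of differences arise from the index swap.

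Finally, for the ``in particular'' assertion I would take $\Omega=V$, so that $V\setminus\Omega=\emptyset$ and the boundary term vanishes, leaving $(\mathcal{L}f_{0},f_{1})=\frac{1}{2}\sum_{x,y\in V}(f_{0}(y)-f_{0}(x))(f_{1}(y)-f_{1}(x))\mathfrak{m}_{xy}$. This expression is manifestly symmetric in $f_{0}$ and $f_{1}$, so applying the same identity with their roles exchanged yields $(\mathcal{L}f_{0},f_{1})=(f_{0},\mathcal{L}f_{1})$, which is the self-adjointness of $\mathcal{L}$.
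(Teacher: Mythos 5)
Your proof is correct and is essentially the argument the paper intends: the paper omits the computation, deferring to Theorem 2.1 in \cite{G}, and that computation is exactly your route---write $\mathcal{L}$ in difference form using $\sum_{y\in V}\mathcal{P}(x,y)=1$, convert $\mathfrak{m}(x)\mathcal{P}(x,y)$ into the symmetric weight $\mathfrak{m}_{xy}$, split off the boundary term, and symmetrize the interior sum via the index swap. Your explicit verification that $\mathcal{P}$ is stochastic (using the Perron--Frobenius identity (\ref{eq:Perron Frobenius}) for the $\olarrow{P}$ part) is the one point where the directed setting differs from \cite{G}, and you handled it correctly.
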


In virtue of Proposition \ref{prop:integration by parts},
$\mathcal{L}$ is symmetric with respect to the inner product.

\begin{rem}\label{rem:various directed Laplacian}
Besides the Chung Laplacian $\mathcal{L}$,
there are several generalizations of the undirected graph Laplacian for directed graphs.
For instance,
Bauer \cite{B} has studied spectral properties of the (non-symmetric) Laplace operator $\mathcal{L}_{0}:\mathcal{F}\to \mathcal{F}$ defined as
\begin{equation*}
\mathcal{L}_{0}f(x):=f(x)-\frac{1}{\sum_{y\in V}\mu_{yx}} \sum_{y\in V} \mu_{yx}f(y),
\end{equation*}
which is equivalent to the operator $\mathcal{L}_{1}:\mathcal{F}\to \mathcal{F}$ defined as
\begin{equation*}
\mathcal{L}_{1}f(x):=f(x)-\sum_{y\in V}P(x,y)f(y)
\end{equation*}
in the sense that
the spectrum of $\mathcal{L}_{0}$ on $(V,\mu)$ coincides with that of $\mathcal{L}_{1}$ on the directed graph that is obtained from $(V,\mu)$ by reversing all edges (see Definition 2.1 in \cite{B}).
On the other hand,
Yoshida \cite{Y} has recently introduced the (non-linear) \textit{submodular Laplace operator} in the context of discrete convex analysis,
which can be applied to the study of directed graphs (see Example 1.5 in \cite{Y}).
He formulated an inequality of Cheeger type for the eigenvalues of the submodular Laplace operator.
We stress that
$(V,\mu)$ does not need to be strongly connected when we define the Laplace operators in \cite{B}, \cite{Y},
unlike the Chung Laplacian.
\end{rem}

\subsection{Optimal transport theory}\label{sec:Optimal transport theory}
We recall the basic facts on the optimal transport theory,
and refer to \cite{V1}, \cite{V2}.
Let $(V,\mu)$ denote a strongly connected, finite weighted directed graph.
For two probability measures $\nu_{0},\nu_{1}$ on $V$,
a probability measure $\pi :V\times V\to [0,\infty)$ is called a \textit{coupling of $(\nu_{0},\nu_{1})$} if
\begin{equation*}
\sum_{y \in V}\pi(x,y) =\nu_{0}(x),\quad \sum_{x \in V}\pi(x,y) = \nu_{1}(y).
\end{equation*}
Let $\Pi(\nu_{0},\nu_{1})$ denote the set of all couplings of $(\nu_{0},\nu_{1})$.
The \textit{($L^{1}$-)Wasserstein distance} from $\nu_{0}$ to $\nu_{1}$ is defined as
\begin{equation}\label{eq:Wasserstein distance}
W(\nu_{0},\nu_{1}):=\inf_{\pi \in \Pi(\nu_{0},\nu_{1})} \sum_{x,y \in V}d(x,y)\pi(x,y).
\end{equation}
This is known to be a (non-symmetric) distance function on the set of all probability measures on $V$.
We also note that
$W(\delta_{x},\delta_{y})=d(x,y)$ for all $x,y\in V$,
where $\delta_{x}:V\to \{0,1\}$ denotes the Dirac measure at $x$ defined as
\begin{equation*}
\delta_{x}(z):=\begin{cases}
                        1   & \text{if $z=x$},\\
                        0   & \text{otherwise}.
                      \end{cases}
\end{equation*}
A coupling $\pi$ is called \textit{optimal} if it attains the infimum of (\ref{eq:Wasserstein distance}).
It is well-known that for any $\nu_{0},\nu_{1}$,
there exists an optimal coupling (cf. Theorem 4.1 in \cite{V2}).

The distance $W$ enjoys the following jointly convexity property (cf. Section 7.4 in \cite{V1}):
\begin{prop}\label{prop:jointly convex}
Let $t\in [0,1]$.
For any four probability measures $\nu_{0},\nu_{1},\sigma_{0},\sigma_{1}$ on $V$,
\begin{equation*}
W((1-t)\nu_{0}+t\nu_{1},(1-t)\sigma_{0}+t\sigma_{1}) \leq (1-t)\, W(\nu_{0},\sigma_{0})+t\,W(\nu_{1},\sigma_{1}).
\end{equation*}
\end{prop}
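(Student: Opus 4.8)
The plan is to exploit the coupling characterization of $W$ in \eqref{eq:Wasserstein distance} together with the fact that convex combinations of couplings are again couplings. The key observation is that transporting the convex combination $(1-t)\nu_0+t\nu_1$ to $(1-t)\sigma_0+t\sigma_1$ can be achieved by interpolating between an optimal plan for $(\nu_0,\sigma_0)$ and an optimal plan for $(\nu_1,\sigma_1)$; since the cost functional $\pi \mapsto \sum_{x,y}d(x,y)\pi(x,y)$ is linear in $\pi$, the cost of this interpolated plan is exactly the right-hand side, and the infimum defining $W$ can only be smaller.

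Concretely, I would first invoke the existence of optimal couplings (stated just above the proposition) to fix $\pi_0\in\Pi(\nu_0,\sigma_0)$ and $\pi_1\in\Pi(\nu_1,\sigma_1)$ attaining $W(\nu_0,\sigma_0)$ and $W(\nu_1,\sigma_1)$ respectively. I would then set
\begin{equation*}
\pi:=(1-t)\pi_0+t\pi_1,
\end{equation*}
and check that $\pi$ is a coupling of $((1-t)\nu_0+t\nu_1,\,(1-t)\sigma_0+t\sigma_1)$. This amounts to a short marginal computation: using $\sum_{y}\pi_i(x,y)=\nu_i(x)$ and $\sum_{x}\pi_i(x,y)=\sigma_i(y)$ for $i=0,1$, linearity gives $\sum_{y}\pi(x,y)=(1-t)\nu_0(x)+t\nu_1(x)$ and $\sum_{x}\pi(x,y)=(1-t)\sigma_0(y)+t\sigma_1(y)$. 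One also notes that $\pi\geq 0$ as a convex combination of nonnegative functions and $\sum_{x,y}\pi(x,y)=(1-t)+t=1$, so $\pi$ is indeed a probability measure on $V\times V$.

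Finally, I would estimate $W$ from above by the cost of this admissible $\pi$. By the definition \eqref{eq:Wasserstein distance} and linearity of the cost,
\begin{equation*}
W\big((1-t)\nu_0+t\nu_1,\,(1-t)\sigma_0+t\sigma_1\big)\leq \sum_{x,y\in V}d(x,y)\,\pi(x,y)=(1-t)\sum_{x,y\in V}d(x,y)\,\pi_0(x,y)+t\sum_{x,y\in V}d(x,y)\,\pi_1(x,y),
\end{equation*}
and the two sums on the right equal $W(\nu_0,\sigma_0)$ and $W(\nu_1,\sigma_1)$ by optimality of $\pi_0,\pi_1$, yielding the claimed inequality. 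I do not expect a genuine obstacle here: the non-symmetry of $d$ plays no role, since the first and second slots of each coupling are kept consistent throughout, and the entire argument is the elementary principle that an infimum over an affine family is a concave (in particular subadditive under convex combination) function of the marginals. The only point demanding a little care is the bookkeeping of the two marginal identities, which is the routine verification described above.
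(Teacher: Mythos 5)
Your proof is correct and follows the standard argument: the paper itself gives no proof of this proposition, citing Section 7.4 of \cite{V1} instead, and your route---taking optimal couplings $\pi_0,\pi_1$ (whose existence is stated in the paper just before the proposition), forming $(1-t)\pi_0+t\pi_1$, checking the marginals, and using linearity of the cost---is exactly the canonical one. Nothing is missing; in particular you rightly observe that the non-symmetry of $d$ plays no role since the two slots of each coupling are never interchanged.
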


We also recall the following Kantorovich-Rubinstein duality formula (cf. Theorem 5.10 and Particular Cases 5.4 and 5.16 in \cite{V2}, and see also Subsection 2.2 in \cite{Oh2}):
\begin{prop}\label{prop:Kantorovich duality}
For any two probability measures $\nu_{0}, \nu_{1}$ on $V$, we have
\begin{equation*}
W(\nu_{0},\nu_{1}) = \sup_{f\in \Lip_{1}(V)} \sum_{x\in V} f(x)\left(  \nu_{1}(x)-\nu_{0}(x)  \right).
\end{equation*}
\end{prop}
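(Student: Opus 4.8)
The plan is to prove the two inequalities separately: the ``$\geq$'' direction by an elementary computation from the definitions, and the ``$\leq$'' direction via finite-dimensional linear programming duality, taking care of the non-symmetry of $d$ throughout.

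First I would establish $W(\nu_0,\nu_1)\geq \sup_{f\in\Lip_1(V)}\sum_{x\in V}f(x)(\nu_1(x)-\nu_0(x))$. Fix $f\in\Lip_1(V)$ and any coupling $\pi\in\Pi(\nu_0,\nu_1)$. Using the two marginal conditions to rewrite $\nu_1$ and $\nu_0$ as partial sums of $\pi$, one obtains $\sum_{x}f(x)(\nu_1(x)-\nu_0(x))=\sum_{x,y}(f(y)-f(x))\pi(x,y)$. Since $f(y)-f(x)\leq d(x,y)$ by the definition of $\Lip_1(V)$ and $\pi\geq 0$, this is at most $\sum_{x,y}d(x,y)\pi(x,y)$. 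Taking the infimum over $\pi$ and then the supremum over $f$ yields the claimed inequality. This direction costs no real work.

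For the reverse inequality I would view $W(\nu_0,\nu_1)$ as the value of the finite linear program of minimizing $\sum_{x,y}d(x,y)\pi(x,y)$ over $\pi\geq 0$ subject to the marginal constraints. The program is feasible (the product $\pi(x,y)=\nu_0(x)\nu_1(y)$ is a coupling) and bounded below by $0$, so strong duality applies and gives $W(\nu_0,\nu_1)=\max\{\sum_{x}\phi(x)\nu_0(x)+\sum_{y}\psi(y)\nu_1(y)\}$ over pairs $(\phi,\psi)$ of functions on $V$ satisfying $\phi(x)+\psi(y)\leq d(x,y)$ for all $x,y$. Fixing an optimal pair, I may replace $\phi$ by the pointwise-largest admissible choice $\phi(x)=\inf_{z\in V}(d(x,z)-\psi(z))$ without decreasing the objective (because $\nu_0\geq 0$ and the new $\phi$ is still admissible), hence without changing the optimal value.

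The crux is then to pass from $(\phi,\psi)$ to a single $1$-Lipschitz function. I would set $f(x):=-\phi(x)=\sup_{z\in V}(\psi(z)-d(x,z))$ and verify two facts. First, $f\in\Lip_1(V)$: choosing $z$ to attain the supremum defining $f(y)$ gives $f(y)-f(x)\leq d(x,z)-d(y,z)\leq d(x,y)$, where the last step is the triangle inequality $d(x,z)\leq d(x,y)+d(y,z)$ for the distance $d$. This is precisely where the orientation of the non-symmetric $d$ must be tracked carefully, and it is the main obstacle of the argument. Second, $f\geq\psi$ pointwise, since evaluating the supremum at $z$ equal to the base point and using $d(z,z)=0$ gives $f(z)\geq\psi(z)$. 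Combining $\phi=-f$ with $f\geq\psi$ and $\nu_0,\nu_1\geq 0$, the dual objective is bounded by $-\sum_{x}f(x)\nu_0(x)+\sum_{y}f(y)\nu_1(y)=\sum_{x}f(x)(\nu_1(x)-\nu_0(x))$, which is at most the supremum over $\Lip_1(V)$. Since the dual objective equals $W(\nu_0,\nu_1)$, this closes the argument and, together with the first step, gives the stated equality.
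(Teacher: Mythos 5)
Your proof is correct, but it takes a genuinely different route from the paper, because the paper does not prove Proposition \ref{prop:Kantorovich duality} at all: it is recalled as a known result, with citations to Theorem 5.10 and Particular Cases 5.4 and 5.16 in \cite{V2} and, for the non-symmetric distance, to Subsection 2.2 in \cite{Oh2}. Your argument replaces that appeal to general optimal-transport theory by a self-contained, fully finite one: the easy inequality follows from the marginal identities plus $f(y)-f(x)\le d(x,y)$, and the hard inequality from finite-dimensional linear programming strong duality (legitimately invoked, since the primal is feasible via the product coupling $\pi(x,y)=\nu_0(x)\nu_1(y)$ and bounded below by $0$), followed by the conversion of an optimal dual pair $(\phi,\psi)$ into the single potential $f=-\phi=\sup_{z}\bigl(\psi(z)-d(\cdot,z)\bigr)$. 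Your bookkeeping of the asymmetry is exactly right and matches the paper's conventions: the paper defines $f\in\Lip_1(V)$ by $f(y)-f(x)\le d(x,y)$, and your verification via the directed triangle inequality $d(x,z)\le d(x,y)+d(y,z)$ lands on precisely that inequality, while the sign conventions in the dual objective produce $\sum_x f(x)\left(\nu_1(x)-\nu_0(x)\right)$ with $\nu_1$ and $\nu_0$ in the correct order. What the citation buys the paper is brevity and generality; what your proof buys is an elementary argument adapted to the finite directed setting that makes the orientation issues explicit --- which has real value here, since most standard references state Kantorovich--Rubinstein duality only for symmetric metrics, and the non-symmetric case is exactly the point at which a careless adaptation could go wrong.
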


\section{Ricci curvature}\label{sec:Ricci curvature}
In this section,
we propose a generalization of the Ricci curvature of Lin-Lu-Yau \cite{LLY} for directed graphs,
and investigate its basic properties.
In what follows,
we denote by $(V,\mu)$ a simple, strongly connected, finite weighted directed graph.

\subsection{Definition of Ricci curvature}\label{sec:Definition of Ricci curvature}
Let us introduce our Ricci curvature.
For $\epsilon \in [0,1]$ and $x\in V$,
we define a probability measure $\nu^{\epsilon}_{x}:V\to [0,1]$ by
\begin{equation}\label{eq:random walk}
\nu^{\epsilon}_{x}(z):=\begin{cases}
                        1-\epsilon   & \text{if $z= x$},\\
                        \epsilon\,\mathcal{P}(x,z)  & \text{if $z \neq x$},
                      \end{cases}
\end{equation}
which can also be written as
\begin{equation}\label{eq:modified random walk}
\nu^{\epsilon}_{x}(z)=(1-\epsilon)\delta_{x}(z)+\epsilon \,\mathcal{P}(x,z).
\end{equation}
Here $\mathcal{P}$ is defined as (\ref{eq:perron vector}).
Note that $\nu^{\epsilon}_{x}$ is a probability measure since $(V,\mu)$ is simple,
and it is supported on $\{x\}\cup \mathcal{N}_{x}$,
where $\mathcal{N}_{x}$ is defined as (\ref{eq:neighborhoods}).

We also notice the following useful property:
\begin{lem}\label{lem:integration property of random walk}
For every $f:V\to \mathbb{R}$
it holds that
\begin{equation}\label{eq:probability measure and Laplacian}
\sum_{z\in V} f(z)\nu^{\epsilon}_{x}(z)=(f+\epsilon \Delta f)(x),
\end{equation}
where $\Delta$ is defined as $(\ref{eq:negative Laplacian})$.
\end{lem}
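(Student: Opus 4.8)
The plan is to reduce the claim to a short direct computation that exploits the two equivalent descriptions of $\nu^{\epsilon}_{x}$ together with the definitions of the Chung Laplacian $\mathcal{L}$ in (\ref{eq:Chung Laplacian}) and of the negative Laplacian $\Delta=-\mathcal{L}$ in (\ref{eq:negative Laplacian}). First I would work from the convex-combination form (\ref{eq:modified random walk}), namely $\nu^{\epsilon}_{x}(z)=(1-\epsilon)\delta_{x}(z)+\epsilon\,\mathcal{P}(x,z)$, insert it into the left-hand side of (\ref{eq:probability measure and Laplacian}), and split the sum into a Dirac part and a kernel part:
\[
\sum_{z\in V}f(z)\nu^{\epsilon}_{x}(z)=(1-\epsilon)\sum_{z\in V}f(z)\delta_{x}(z)+\epsilon\sum_{z\in V}f(z)\mathcal{P}(x,z).
\]

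Next I would evaluate each piece separately. The Dirac part collapses to $(1-\epsilon)f(x)$, since $\sum_{z\in V}f(z)\delta_{x}(z)=f(x)$. For the kernel part, I would read off from the definition (\ref{eq:Chung Laplacian}) of $\mathcal{L}$ the rearrangement $\sum_{y\in V}\mathcal{P}(x,y)f(y)=f(x)-\mathcal{L}f(x)$, so that the kernel contribution equals $\epsilon\,(f(x)-\mathcal{L}f(x))$. Adding the two pieces gives $(1-\epsilon)f(x)+\epsilon f(x)-\epsilon\,\mathcal{L}f(x)=f(x)-\epsilon\,\mathcal{L}f(x)$, and the identity $\Delta=-\mathcal{L}$ from (\ref{eq:negative Laplacian}) converts this into $(f+\epsilon\Delta f)(x)$, which is exactly the asserted equality.

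The one point requiring care, and essentially the only place where a hypothesis is genuinely invoked, is the reconciliation of the two ways of indexing the kernel sum. In the piecewise definition (\ref{eq:random walk}) the value at $z=x$ is prescribed separately, so the kernel sum ostensibly runs over $z\neq x$, whereas the definition of $\mathcal{L}$ sums $\mathcal{P}(x,y)$ over all $y\in V$, including $y=x$. These agree precisely because $(V,\mu)$ is simple: $\mu_{xx}=0$ forces $P(x,x)=0$ by (\ref{eq:Markov kernel}), hence $\olarrow{P}(x,x)=0$ and therefore $\mathcal{P}(x,x)=0$, so the $z=x$ term of the kernel sum vanishes and may be included or omitted freely. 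I expect this bookkeeping to be the main obstacle, such as it is; once it is noted, the remainder is a routine linear manipulation.
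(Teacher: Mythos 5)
Your proof is correct and follows essentially the same route as the paper: both expand $\sum_{z\in V}f(z)\nu^{\epsilon}_{x}(z)$ into the Dirac part $(1-\epsilon)f(x)$ plus the kernel part, and both invoke the simpleness of $(V,\mu)$ (i.e.\ $\mathcal{P}(x,x)=0$) at exactly the same spot, to identify the sum over $z\neq x$ with the full sum appearing in the definition of $\mathcal{L}$. Your write-up merely makes explicit the bookkeeping that the paper compresses into the phrase ``we used the simpleness of $(V,\mu)$ in the second equality.''
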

\begin{proof}
From straightforward computations
we deduce
\begin{equation*}
\sum_{z\in V} f(z)\nu^{\epsilon}_{x}(z)=(1-\epsilon)f(x)+\epsilon \sum_{z\in V\setminus \{x\}}\mathcal{P}(x,z)f(z)=(f+\epsilon \Delta f)(x).
\end{equation*}
Here we used the simpleness of $(V,\mu)$ in the second equality.
This proves (\ref{eq:probability measure and Laplacian}).
\end{proof}

For $x,y\in V$ with $x\neq y$,
we set
\begin{equation}\label{eq:pre Ricci curvature}
\kappa_{\epsilon}(x,y):=1-\frac{W(\nu^{\epsilon}_{x},\nu^{\epsilon}_{y})}{d(x,y)},
\end{equation}
where $W$ is defined as (\ref{eq:Wasserstein distance}).
We will define our Ricci curvature as the limit of $\kappa_{\epsilon}(x,y)/\epsilon$ as $\epsilon \to 0$.
To do so,
we first verify the following (cf. Lemma 2.1 in \cite{LLY}, and see also \cite{BCLMP}, \cite{MW}):
\begin{lem}\label{lem:preconcavity}
Let $x,y\in V$ with $x\neq y$.
Then $\kappa_{\epsilon}(x,y)$ is concave in $\epsilon \in [0,1]$. 
In particular,
$\kappa_{\epsilon}(x,y)/\epsilon$ is non-increasing in $\epsilon \in (0,1]$.
\end{lem}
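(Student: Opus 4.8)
The plan is to reduce the statement to the convexity of the map $\epsilon \mapsto W(\nu^{\epsilon}_{x},\nu^{\epsilon}_{y})$. Since $d(x,y)$ is a fixed positive constant (independent of $\epsilon$), the function $\kappa_{\epsilon}(x,y)=1-W(\nu^{\epsilon}_{x},\nu^{\epsilon}_{y})/d(x,y)$ is concave in $\epsilon$ precisely when $\epsilon \mapsto W(\nu^{\epsilon}_{x},\nu^{\epsilon}_{y})$ is convex. So the whole first assertion comes down to establishing this convexity.

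The key observation I would exploit is that, by the representation (\ref{eq:modified random walk}), the measure $\nu^{\epsilon}_{x}=(1-\epsilon)\delta_{x}+\epsilon\,\mathcal{P}(x,\cdot)$ is \emph{affine} in the parameter $\epsilon$. Concretely, fix $\epsilon_{0},\epsilon_{1}\in[0,1]$ and $t\in[0,1]$, and put $\epsilon_{t}:=(1-t)\epsilon_{0}+t\epsilon_{1}$. A short computation matching the coefficients of $\delta_{x}$ and of $\mathcal{P}(x,\cdot)$ shows
\begin{equation*}
\nu^{\epsilon_{t}}_{x}=(1-t)\,\nu^{\epsilon_{0}}_{x}+t\,\nu^{\epsilon_{1}}_{x},
\end{equation*}
and likewise for $y$. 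Applying the joint convexity of the Wasserstein distance (Proposition \ref{prop:jointly convex}) to these two decompositions then gives
\begin{equation*}
W(\nu^{\epsilon_{t}}_{x},\nu^{\epsilon_{t}}_{y})\leq (1-t)\,W(\nu^{\epsilon_{0}}_{x},\nu^{\epsilon_{0}}_{y})+t\,W(\nu^{\epsilon_{1}}_{x},\nu^{\epsilon_{1}}_{y}),
\end{equation*}
which is exactly the desired convexity, and hence the concavity of $\kappa_{\epsilon}(x,y)$.

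For the ``in particular'' clause, I would first record that $\nu^{0}_{x}=\delta_{x}$ and $\nu^{0}_{y}=\delta_{y}$, so $W(\nu^{0}_{x},\nu^{0}_{y})=W(\delta_{x},\delta_{y})=d(x,y)$ and therefore $\kappa_{0}(x,y)=0$. Then I would invoke the elementary slope monotonicity for a concave function vanishing at the origin: for $0<\epsilon_{1}\leq\epsilon_{2}\leq 1$, writing $\epsilon_{1}=(1-\epsilon_{1}/\epsilon_{2})\cdot 0+(\epsilon_{1}/\epsilon_{2})\cdot \epsilon_{2}$ and using concavity together with $\kappa_{0}(x,y)=0$ yields $\kappa_{\epsilon_{1}}(x,y)\geq (\epsilon_{1}/\epsilon_{2})\,\kappa_{\epsilon_{2}}(x,y)$, i.e.\ $\kappa_{\epsilon}(x,y)/\epsilon$ is non-increasing on $(0,1]$.

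I do not expect a genuine obstacle here: the argument is entirely structural. The one point requiring care is the affineness identity $\nu^{\epsilon_{t}}_{x}=(1-t)\nu^{\epsilon_{0}}_{x}+t\nu^{\epsilon_{1}}_{x}$, since this is what makes Proposition \ref{prop:jointly convex} directly applicable; everything else is a routine consequence. This is the same mechanism underlying Lemma 2.1 of Lin--Lu--Yau, the only difference being that the mean transition kernel $\mathcal{P}$ replaces the symmetric neighbor-averaging kernel of the undirected setting.
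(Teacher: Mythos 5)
Your proposal is correct and follows essentially the same route as the paper's proof: the affineness $\nu^{\epsilon_{t}}_{x}=(1-t)\nu^{\epsilon_{0}}_{x}+t\,\nu^{\epsilon_{1}}_{x}$ combined with Proposition \ref{prop:jointly convex} is exactly the paper's mechanism for concavity, and the slope-monotonicity argument using $\kappa_{0}(x,y)=0$ is the paper's derivation of the ``in particular'' clause. The only cosmetic difference is that you phrase the first step as convexity of $\epsilon\mapsto W(\nu^{\epsilon}_{x},\nu^{\epsilon}_{y})$ rather than concavity of $\kappa_{\epsilon}(x,y)$ directly, which is an equivalent reformulation.
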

\begin{proof}
Fix $\epsilon_{0},\epsilon_{1},t\in [0,1]$ with $\epsilon_{0}\leq \epsilon_{1}$,
and set $\epsilon_{t}:=(1-t)\epsilon_{0} + t \epsilon_{1}$.
We can check that
\begin{equation*}
\nu^{\epsilon_t}_{x} = (1-t)\nu^{\epsilon_0}_{x} + t \nu^{\epsilon_1}_{x},\quad \nu^{\epsilon_t}_{y} = (1-t)\nu^{\epsilon_0}_{y} + t \nu^{\epsilon_1}_{y}. 
\end{equation*}
Proposition \ref{prop:jointly convex} tells us that
\begin{align}\label{eq:concavitiy of pre Ricci curvature}
\kappa_{\epsilon_t}(x,y) & = 1-\frac{W(\nu^{\epsilon_t}_{x},\nu^{\epsilon_t}_{y})}{d(x,y)} = 1-\frac{W((1-t)\nu^{\epsilon_0}_{x} + t \nu^{\epsilon_1}_{x},(1-t)\nu^{\epsilon_0}_{y} + t \nu^{\epsilon_1}_{y}    )}{d(x,y)}\\ \notag
& \geq 1-\frac{(1-t)W(\nu^{\epsilon_0}_{x},\nu^{\epsilon_0}_{y}) + t\, W(\nu^{\epsilon_1}_{x},\nu^{\epsilon_1}_{y})}{d(x,y)}= (1-t) \kappa_{\epsilon_0}(x,y) + t \kappa_{\epsilon_1}(x,y). 
\end{align}
Therefore,
we arrive at the concavity.

Applying (\ref{eq:concavitiy of pre Ricci curvature}) to $\epsilon_{0}=0$,
and noticing $\epsilon_{t}=t\epsilon_{1}$ and $\kappa_{0}(x,y)=0$,
we have
\begin{equation*}
\frac{\kappa_{\epsilon_t}(x,y)}{\epsilon_t} \ge \frac{t \kappa_{\epsilon_1}(x,y)}{t \epsilon_1} =  \frac{\kappa_{\epsilon_1}(x,y)}{\epsilon_1}
\end{equation*}
for $t\in (0,1]$,
and hence $\kappa_{\epsilon}(x,y)/\epsilon$ is non-increasing in $\epsilon \in (0,1]$.
We conclude the lemma.
\end{proof}
Lin-Lu-Yau \cite{LLY} have shown Lemma \ref{lem:preconcavity} in the undirected case (see Lemma 2.1 in \cite{LLY}).

In view of Lemma \ref{lem:preconcavity},
it suffices to show that
$\kappa_{\epsilon}(x,y)/\epsilon$ is bounded from above by a constant which does not depend on $\epsilon$.
In order to derive the boundedness,
we consider the \textit{asymptotic mean curvature $\mathcal{H}_{x}$ around $x$} that is already introduced in Subsection \ref{sec:Main results and organization},
and the \textit{reverse asymptotic mean curvature $\olarrow{\mathcal{H}}_{x}$} defined as
\begin{equation*}\label{eq:asymptotic mean curvature}
\mathcal{H}_{x}:=\mathcal{L} \rho_{x}(x),\quad \olarrow{\mathcal{H}}_{x}:=\mathcal{L} \olarrow{\rho}_{x}(x),
\end{equation*}
where $\mathcal{L}$ is defined as (\ref{eq:Chung Laplacian}),
and $\rho_{x}$ and $\olarrow{\rho}_{x}$ are done as (\ref{eq:distance function from a single point}).
More explicitly,
\begin{align}\label{eq:mean curvature formula}
\mathcal{H}_{x}&=-\sum_{y\in V} \mathcal{P}(x,y)d(x,y)=-\frac{1}{2}-\frac{1}{2}\sum_{y\in V} \olarrow{P}(x,y)d(x,y),\\ \label{eq:reverse mean curvature formula}
\olarrow{\mathcal{H}}_{x}&=-\sum_{y\in V} \mathcal{P}(x,y)d(y,x)=-\frac{1}{2}\sum_{y\in V} P(x,y)d(y,x)-\frac{1}{2}                               
\end{align}
for $P,\,\olarrow{P}$ defined as (\ref{eq:Markov kernel}), (\ref{eq:perron vector}).
We have $\mathcal{H}_{x}\leq -1$ and $\olarrow{\mathcal{H}}_{x}\leq -1$ since $\min\{d(x,y),d(y,x)\}=1$ for $y\in \mathcal{N}_{x}$.
Furthermore,
we see $\mathcal{H}_{x}=\olarrow{\mathcal{H}}_{x}= -1$ in the undirected case (see Remark \ref{rem:concrete perron measure}).

\begin{rem}\label{rem:formulation of asymptotic mean curvature}
The formulation of asymptotic mean curvature is based on the following observation concerning Riemannian geometry:
Let $(M,g)$ be a Riemannian manifold (without boundary).
We denote by $d_{g}$ the Riemannian distance,
and by $\mathcal{L}_{g}$ the Laplacian defined as the minus of the trace of Hessian.
For a fixed $x\in M$,
let $\rho_{g,x}$ stand for the distance function from $x$ defined as $\rho_{g,x}:=d_{g}(x,\cdot)$.
For a sufficiently small $R>0$,
we consider the metric sphere $S_{g,R}(x)$ with radius $R$ centered at $x$.
Then the (inward) mean curvature of $S_{g,R}(x)$ at $y \in S_{g,R}(x)$ is equal to $\mathcal{L}_{g} \rho_{g,x}(y)$.
We notice that
in the manifold case,
the mean curvature tends to $-\infty$ as $R\to 0$,
unlike the graph case.
\end{rem}

For $x,y\in V$,
we define the \textit{mixed asymptotic mean curvature} $\mathcal{H}(x,y)$ by
\begin{equation*}\label{eq:mixed asymptotic mean curvature}
\mathcal{H}(x,y):=-(\mathcal{H}_{x}+\olarrow{\mathcal{H}}_{y}).
\end{equation*}
We have $\mathcal{H}(x,y)\geq 2$;
moreover,
the equality holds in the undirected case.

We now present the following upper estimate of $\kappa_{\epsilon}(x,y)/\epsilon$ in terms of the mixed asymptotic mean curvature (cf. Lemma 2.2 in \cite{LLY}):
\begin{lem}\label{lem:preboundedness}
For all $\epsilon \in [0,1]$ and $x,y\in V$ with $x\neq y$,
we have
\begin{equation}\label{eq:preboundedness}
\frac{\kappa_{\epsilon}(x,y)}{\epsilon}\leq \frac{\mathcal{H}(x,y)}{d(x,y)}.
\end{equation}
\end{lem}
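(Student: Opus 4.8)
The plan is to convert the claimed bound into a lower estimate for the Wasserstein distance and then extract that estimate from the Kantorovich--Rubinstein duality (Proposition \ref{prop:Kantorovich duality}) via one carefully chosen test function. Since $\kappa_{\epsilon}(x,y)=1-W(\nu^{\epsilon}_{x},\nu^{\epsilon}_{y})/d(x,y)$ and $\mathcal{H}(x,y)=-(\mathcal{H}_{x}+\olarrow{\mathcal{H}}_{y})$, multiplying (\ref{eq:preboundedness}) through by $d(x,y)>0$ shows it is equivalent to
\begin{equation*}
W(\nu^{\epsilon}_{x},\nu^{\epsilon}_{y})\geq d(x,y)+\epsilon\,(\mathcal{H}_{x}+\olarrow{\mathcal{H}}_{y}).
\end{equation*}
Thus the entire task is to bound $W$ from below by this quantity.

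To do so, I would test the duality formula against $f=\rho_{x}\in \Lip_{1}(V)$, which is admissible since $\rho_{x}$ is $1$-Lipschitz. By Lemma \ref{lem:integration property of random walk} together with $\rho_{x}(x)=0$, $\rho_{x}(y)=d(x,y)$ and $\mathcal{L}\rho_{x}(x)=\mathcal{H}_{x}$, the dual functional evaluates to
\begin{equation*}
\sum_{z\in V}\rho_{x}(z)\left(\nu^{\epsilon}_{y}(z)-\nu^{\epsilon}_{x}(z)\right)=d(x,y)-\epsilon\,\mathcal{L}\rho_{x}(y)+\epsilon\,\mathcal{H}_{x},
\end{equation*}
and Proposition \ref{prop:Kantorovich duality} bounds $W(\nu^{\epsilon}_{x},\nu^{\epsilon}_{y})$ from below by this expression. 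Comparing with the target inequality above, the proof reduces to the single pointwise estimate $-\mathcal{L}\rho_{x}(y)\geq \olarrow{\mathcal{H}}_{y}$, equivalently $\mathcal{L}\rho_{x}(y)+\olarrow{\mathcal{H}}_{y}\leq 0$.

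Finally, I would verify this last inequality by unwinding the definitions and invoking the triangle inequality. Using $\mathcal{L}\rho_{x}(y)=d(x,y)-\sum_{z}\mathcal{P}(y,z)d(x,z)$, the formula $\olarrow{\mathcal{H}}_{y}=-\sum_{z}\mathcal{P}(y,z)d(z,y)$ from (\ref{eq:reverse mean curvature formula}), and the fact that $\mathcal{P}$ is a probability kernel (so $\sum_{z}\mathcal{P}(y,z)=1$), one obtains
\begin{equation*}
\mathcal{L}\rho_{x}(y)+\olarrow{\mathcal{H}}_{y}=\sum_{z\in V}\mathcal{P}(y,z)\left(d(x,y)-d(x,z)-d(z,y)\right)\leq 0,
\end{equation*}
since $d(x,y)\leq d(x,z)+d(z,y)$ and $\mathcal{P}(y,z)\geq 0$ for every $z$. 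The only genuinely delicate point is the choice of test function: because the distance is non-symmetric, testing against $\rho_{x}$ (rather than some symmetrized distance) is precisely what makes the reverse curvature $\olarrow{\mathcal{H}}_{y}$ emerge through the reverse distance $d(z,y)=\olarrow{\rho}_{y}(z)$ in the triangle inequality. Everything else is bookkeeping with Lemma \ref{lem:integration property of random walk}.
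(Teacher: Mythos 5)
Your proposal is correct, and it reaches the bound by a genuinely different route than the paper. The paper's proof is purely metric: it applies the triangle inequality in Wasserstein space, $W(\nu^{\epsilon}_{x},\nu^{\epsilon}_{y})\geq W(\delta_{x},\delta_{y})-W(\delta_{x},\nu^{\epsilon}_{x})-W(\nu^{\epsilon}_{y},\delta_{y})$, and then computes the two correction terms exactly via Lemma \ref{lem:integration property of random walk}, namely $W(\delta_{x},\nu^{\epsilon}_{x})=-\epsilon\,\mathcal{H}_{x}$ and $W(\nu^{\epsilon}_{y},\delta_{y})=-\epsilon\,\olarrow{\mathcal{H}}_{y}$ (exact because a coupling with a Dirac marginal is forced to be the product). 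You instead go through Kantorovich--Rubinstein duality with the single test function $\rho_{x}$, which reduces everything to the pointwise estimate $\mathcal{L}\rho_{x}(y)\leq-\olarrow{\mathcal{H}}_{y}$, verified by the directed triangle inequality on $V$; your choice of $\rho_{x}$ (which is $1$-Lipschitz even though $d$ is non-symmetric, unlike $\olarrow{\rho}_{x}$) is indeed the crux, exactly as you flag. A side benefit of your route: your intermediate inequality is $\kappa_{\epsilon}(x,y)/\epsilon\leq\nabla_{xy}\mathcal{L}\rho_{x}$, which is at least as sharp as (\ref{eq:preboundedness}) and is precisely the bound that later falls out of the limit-free formula (Theorem \ref{thm:limit free formula}) applied to $\rho_{x}\in\mathcal{F}_{xy}$ --- so your argument foreshadows the M\"unch--Wojciechowski characterization, and in effect derives Lemma \ref{lem:preboundedness} inside that duality framework. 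What the paper's argument buys in exchange is self-containedness and symmetry of presentation: it needs no duality at all (only the triangle inequality for $W$ and the expectation formula against Dirac masses), which matters for the paper's ordering, since the duality manipulations (Lemma \ref{lem:implication of Kantrovich duality}) appear only after the Ricci curvature is already well defined via Lemmas \ref{lem:preconcavity} and \ref{lem:preboundedness}.
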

\begin{proof}
By the triangle inequality,
we have
\begin{align*}
W(\nu^{\epsilon}_{x},\nu^{\epsilon}_{y})&\geq W(\delta_{x},\delta_{y})-W(\delta_{x},\nu^{\epsilon}_{x})-W(\nu^{\epsilon}_{y},\delta_{y})\\
                                                          &   =   d(x,y)-W(\delta_{x},\nu^{\epsilon}_{x})-W(\nu^{\epsilon}_{y},\delta_{y}).
\end{align*}
From Lemma \ref{lem:integration property of random walk},
it follows that
\begin{align*}
W(\delta_{x},\nu^{\epsilon}_{x})&=\sum_{z\in V}d(x,z)\nu^{\epsilon}_{x}(z)=\epsilon \,\Delta \rho_{x}(x)=-\epsilon \,\mathcal{H}_{x},\\
W(\nu^{\epsilon}_{y},\delta_{y})&=\sum_{z\in V}d(z,y)\nu^{\epsilon}_{y}(z)=\epsilon \,\Delta \olarrow{\rho}_{y}(y)=-\epsilon \,\olarrow{\mathcal{H}}_{y}.                                             
\end{align*}
This yields
\begin{equation*}
W(\nu^{\epsilon}_{x},\nu^{\epsilon}_{y})\geq d(x,y)+\epsilon (  \mathcal{H}_{x}+\olarrow{\mathcal{H}}_{y} )=d(x,y)-\epsilon\,\mathcal{H}(x,y).
\end{equation*}
We obtain
\begin{equation*}
\frac{\kappa_{\epsilon}(x,y)}{\epsilon}=\frac{1}{\epsilon}\left(1-\frac{W(\nu^{\epsilon}_{x},\nu^{\epsilon}_{y})}{d(x,y)}   \right)\leq \frac{\mathcal{H}(x,y)}{d(x,y)}.
\end{equation*}
We complete the proof.
\end{proof}

\begin{rem}
Lin-Lu-Yau \cite{LLY} proved Lemma \ref{lem:preconcavity} in the undirected case (see Lemma 2.2 in \cite{LLY}).
We emphasize that
in the undirected case,
$\mathcal{H}(x,y)$ has not appeared in the right hand side of (\ref{eq:preboundedness}).
Actually,
its right hand side is equal to $2/d(x,y)$ in that case.
\end{rem}

In virtue of Lemmas \ref{lem:preconcavity}, \ref{lem:preboundedness},
we can define our Ricci curvature as follows:
\begin{defi}\label{defi:Ricci curvature}
For $x,y\in V$ with $x\neq y$,
we define the \textit{Ricci curvature} by
\begin{equation*}
\kappa(x,y):=\lim_{\epsilon\to 0}\frac{\kappa_{\epsilon}(x,y)}{\epsilon}.
\end{equation*}
\end{defi}

In undirected case,
this is nothing but the Ricci curvature introduced by Lin-Lu-Yau \cite{LLY}.

\begin{rem}\label{rem:various directed Ricci curvature}
Similarly to the Laplacian,
besides our Ricci curvature $\kappa(x,y)$,
there might be some generalizations of the undirected Ricci curvature of Lin-Lu-Yau \cite{LLY} for directed graphs (cf. Remark \ref{rem:various directed Laplacian}).
The third author \cite{Y1} firstly proposed the following generalization:
\begin{equation*}
\orarrow{\orarrow{\kappa}}(x,y):=\lim_{\epsilon\to 0} \frac{1}{\epsilon}\left(  1-\frac{W(\orarrow{\nu}^{\epsilon}_{x},\orarrow{\nu}^{\epsilon}_{y})}{d(x,y)} \right),
\end{equation*}
where
\begin{equation*}
\orarrow{\nu}^{\epsilon}_{x}(z):=\begin{cases}
                        1-\epsilon   & \text{if $z= x$},\\
                        \epsilon\,P(x,z)  & \text{if $z \neq x$}.
                      \end{cases}
\end{equation*}
This can be called the \textit{out-out type Ricci curvature} since we consider the Wasserstein distance from the \textit{outer probability measure} $\orarrow{\nu}^{\epsilon}_{x}$ to the \textit{outer} one $\orarrow{\nu}^{\epsilon}_{y}$.
On the other hand,
Eidi-Jost \cite{EJ} considered the \textit{in-out type Ricci curvature},
and used them for the study of directed hypergraphs (see Definition 3.2 in \cite{EJ}).
In our setting,
their in-out type Ricci curvature can be formulated as follows:
\begin{equation*}
\olarrow{\orarrow{\kappa}}(x,y):=\lim_{\epsilon\to 0}\frac{1}{\epsilon}\left(  1-\frac{W(\olarrow{\nu}^{\epsilon}_{x},\orarrow{\nu}^{\epsilon}_{y})}{d(x,y)} \right),
\end{equation*}
where
\begin{equation*}
\displaystyle\olarrow{\nu}^{\epsilon}_{x}(z):=\begin{cases}
                        1-\epsilon   & \text{if $z= x$},\\
                        \epsilon\,\displaystyle\frac{\mu_{zx}}{\sum_{y\in V}\mu_{yx}}  & \text{if $z \neq x$}.
                      \end{cases}
\end{equation*}
It seems that
we can also consider the following \textit{out-in type Ricci curvature $\orarrow{\olarrow{\kappa}}(x,y)$},
and the \textit{in-in type Ricci curvature $\olarrow{\olarrow{\kappa}}(x,y)$} defined as follows (cf. Section 8 in \cite{EJ}):
\begin{equation*}
\orarrow{\olarrow{\kappa}}(x,y):=\lim_{\epsilon\to 0} \frac{1}{\epsilon}\left(  1-\frac{W(\orarrow{\nu}^{\epsilon}_{x},\olarrow{\nu}^{\epsilon}_{y})}{d(x,y)} \right),\quad
\olarrow{\olarrow{\kappa}}(x,y):=\lim_{\epsilon\to 0} \frac{1}{\epsilon}\left(  1-\frac{W(\olarrow{\nu}^{\epsilon}_{x},\olarrow{\nu}^{\epsilon}_{y})}{d(x,y)} \right).
\end{equation*}
\end{rem}

Our Ricci curvature satisfies the following property (see Lemma 2.3 in \cite{LLY}):
\begin{prop}[\cite{LLY}]\label{prop:local to global}
Let $K\in \mathbb{R}$.
If $\kappa(z,w) \geq K$ for all edges $(z,w) \in E$,
then $\kappa(x,y) \geq K$ for any two distinct vertices $x,y\in V$.
\end{prop}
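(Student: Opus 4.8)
The plan is to adapt the local-to-global argument of Lin-Lu-Yau \cite{LLY} to the present directed setting; the only genuine novelty is that one must work along a \emph{directed} shortest path, since both $d$ and $W$ are non-symmetric here. Fix distinct $x,y\in V$ and set $l:=d(x,y)$. Because $(V,\mu)$ is strongly connected, $d(x,y)$ is attained by a directed shortest path $x=x_{0}\rarrow x_{1}\rarrow \cdots \rarrow x_{l}=y$. Each consecutive pair $(x_{i},x_{i+1})$ is an edge in $E$, so $\kappa(x_{i},x_{i+1})\geq K$ by hypothesis, and $d(x_{i},x_{i+1})=1$ for every $i$.

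The heart of the matter is a chaining inequality for $\kappa_{\epsilon}$. Since $W$ satisfies the triangle inequality on probability measures (it is a non-symmetric distance, as recorded in Subsection \ref{sec:Optimal transport theory}), I would first apply it repeatedly along the path to obtain
\[
W(\nu^{\epsilon}_{x},\nu^{\epsilon}_{y})\leq \sum_{i=0}^{l-1}W(\nu^{\epsilon}_{x_{i}},\nu^{\epsilon}_{x_{i+1}}).
\]
Because $d(x_{i},x_{i+1})=1$, the definition (\ref{eq:pre Ricci curvature}) gives $W(\nu^{\epsilon}_{x_{i}},\nu^{\epsilon}_{x_{i+1}})=1-\kappa_{\epsilon}(x_{i},x_{i+1})$, while $d(x,y)=l$. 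Substituting and rearranging then yields, for every $\epsilon\in(0,1]$,
\[
\kappa_{\epsilon}(x,y)=1-\frac{W(\nu^{\epsilon}_{x},\nu^{\epsilon}_{y})}{l}\geq \frac{1}{l}\sum_{i=0}^{l-1}\kappa_{\epsilon}(x_{i},x_{i+1}).
\]

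Dividing by $\epsilon>0$ preserves the inequality, and letting $\epsilon\to 0$ (all the limits exist by Lemmas \ref{lem:preconcavity} and \ref{lem:preboundedness}) gives
\[
\kappa(x,y)\geq \frac{1}{l}\sum_{i=0}^{l-1}\kappa(x_{i},x_{i+1})\geq \frac{1}{l}\cdot lK=K,
\]
which is the claim. The only step requiring care—and the sole place where directedness intervenes—is the very first one: one must invoke the triangle inequality for $W$ in the correct orientation and be certain that a \emph{directed} shortest path, rather than merely an undirected one, joins $x$ to $y$. The triangle inequality for the non-symmetric Wasserstein distance itself poses no additional difficulty, as it follows by concatenating optimal couplings exactly as in the symmetric case.
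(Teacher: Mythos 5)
Your proof is correct and is essentially the argument the paper has in mind: the paper omits the proof precisely because the Lin-Lu-Yau chaining argument (triangle inequality for $W$ along a shortest path, the identity $W(\nu^{\epsilon}_{x_i},\nu^{\epsilon}_{x_{i+1}})=1-\kappa_{\epsilon}(x_i,x_{i+1})$ on edges, then division by $\epsilon$ and the limit $\epsilon\to 0$ justified by Lemmas \ref{lem:preconcavity} and \ref{lem:preboundedness}) carries over verbatim to the directed setting. Your two points of care — taking a genuinely directed shortest path, guaranteed by strong connectedness, and applying the non-symmetric triangle inequality for $W$ in the orientation from $x$ to $y$ — are exactly what makes the adaptation work.
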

Lin-Lu-Yau \cite{LLY} obtained Proposition \ref{prop:local to global} in the undirected case (see Lemma 2.3 in \cite{LLY}).
We can prove Proposition \ref{prop:local to global} by the same argument as in the undirected case.
We omit it.

\subsection{Ricci curvature and Laplacian}\label{sec:Ricci curvature and Laplacian}
In this subsection,
we study the relation between our Ricci curvature and the Chung Laplacian $\mathcal{L}$.
In the undirected case,
M\"unch-Wojciechowski \cite{MW} have characterized the Ricci curvature of Lin-Lu-Yau \cite{LLY} in terms of the Laplacian (see Theorem 2.1 in \cite{MW}).
We will extend their characterization result to our directed setting.

Let $x,y\in V$ with $x\neq y$.
We define the \textit{gradient operator} by
\begin{equation}\label{eq:gradient operator}
\nabla_{xy} f:=\frac{f(y)-f(x)}{d(x,y)}
\end{equation}
for $f:V\to \mathbb{R}$.
Notice that
if $f$ is $L$-Lipschitz,
then $\nabla_{xy} f\leq L$.
We show the following:
\begin{lem}\label{lem:implication of Kantrovich duality}
Let $x,y\in V$ with $x \neq y$.
We have
\begin{equation}\label{eq:implication of Kantrovich duality}
\frac{\kappa_{\epsilon}(x,y)}{\epsilon}=\inf_{f\in \Lip_{1}(V)}  \left( \frac{1}{\epsilon}      \left(1-\nabla_{xy}f \right)+\nabla_{xy} \mathcal{L} f   \right).
\end{equation}
\end{lem}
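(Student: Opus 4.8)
The plan is to combine the Kantorovich--Rubinstein duality formula (Proposition \ref{prop:Kantorovich duality}) with the integration property of the random walk (Lemma \ref{lem:integration property of random walk}), reducing the whole statement to a routine sign-tracking exercise. The key identity to exploit is that the Wasserstein distance between $\nu^{\epsilon}_{x}$ and $\nu^{\epsilon}_{y}$ is a supremum over $\Lip_{1}(V)$ of a linear functional that Lemma \ref{lem:integration property of random walk} evaluates explicitly.

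First I would apply Proposition \ref{prop:Kantorovich duality} to write
\begin{equation*}
W(\nu^{\epsilon}_{x},\nu^{\epsilon}_{y})=\sup_{f\in \Lip_{1}(V)} \sum_{z\in V} f(z)\left(\nu^{\epsilon}_{y}(z)-\nu^{\epsilon}_{x}(z)\right).
\end{equation*}
For each fixed $f\in \Lip_{1}(V)$, Lemma \ref{lem:integration property of random walk} gives $\sum_{z\in V} f(z)\nu^{\epsilon}_{x}(z)=(f+\epsilon \Delta f)(x)$ and the analogous identity at $y$. Subtracting these two and then dividing by $d(x,y)$, the leading term becomes $\nabla_{xy} f$ and the $\epsilon$-term becomes $\epsilon\, \nabla_{xy} \Delta f$, where the gradient operator is the one defined in (\ref{eq:gradient operator}). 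Since $\Delta=-\mathcal{L}$ by (\ref{eq:negative Laplacian}), this last quotient equals $-\epsilon\, \nabla_{xy} \mathcal{L} f$, so that
\begin{equation*}
\frac{W(\nu^{\epsilon}_{x},\nu^{\epsilon}_{y})}{d(x,y)}=\sup_{f\in \Lip_{1}(V)}\left( \nabla_{xy} f-\epsilon\, \nabla_{xy} \mathcal{L} f\right).
\end{equation*}

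Finally I would substitute this into the definition (\ref{eq:pre Ricci curvature}) of $\kappa_{\epsilon}(x,y)$. Turning the outer $1-\sup(\cdots)$ into $\inf(1-\cdots)$ and then dividing through by $\epsilon$ yields exactly (\ref{eq:implication of Kantrovich duality}). The only point requiring care is the bookkeeping of signs in the two passages, namely from $\Delta$ to $\mathcal{L}$ and from the supremum to the infimum; since these are purely algebraic manipulations of a duality formula and an already-established linear identity, there is no genuine analytic obstacle, and the argument is essentially immediate once the two cited results are in hand.
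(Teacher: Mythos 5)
Your proposal is correct and follows essentially the same route as the paper's own proof: apply the Kantorovich--Rubinstein duality (Proposition \ref{prop:Kantorovich duality}) to $W(\nu^{\epsilon}_{x},\nu^{\epsilon}_{y})$, evaluate the linear functional via Lemma \ref{lem:integration property of random walk} to get $d(x,y)\sup_{f\in\Lip_{1}(V)}\nabla_{xy}(f+\epsilon\,\Delta f)$, and then substitute into the definition (\ref{eq:pre Ricci curvature}) of $\kappa_{\epsilon}(x,y)$, converting the supremum to an infimum and using $\Delta=-\mathcal{L}$. The sign bookkeeping you describe is exactly what the paper does, so there is nothing to add.
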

\begin{proof}
Using Proposition \ref{prop:Kantorovich duality} and Lemma \ref{lem:integration property of random walk},
we have
\begin{align*}
W(\nu^{\epsilon}_{x},\nu^{\epsilon}_{y})&= \sup_{f\in \Lip_{1}(V)} \sum_{z\in V} f(z)\left(  \nu^{\epsilon}_{y}(z)-\nu^{\epsilon}_{x}(z)  \right)\\
                                                           &= \sup_{f\in \Lip_{1}(V)} \left( (f(y)+\epsilon \Delta f(y) )-(f(x)+\epsilon \Delta f(x) )  \right)\\
                                                           &= d(x,y)\,\sup_{f\in \Lip_{1}(V)}   \nabla_{xy}(f+\epsilon\,\Delta f).
\end{align*}
This leads us that
\begin{equation*}
\frac{\kappa_{\epsilon}(x,y)}{\epsilon}= \inf_{f\in \Lip_{1}(V)}  \left( \frac{1}{\epsilon}      \left(1-\nabla_{xy}(f+\epsilon\,\Delta f) \right)   \right)= \inf_{f\in \Lip_{1}(V)}  \left( \frac{1}{\epsilon}      \left(1-\nabla_{xy}f \right)+\nabla_{xy} \mathcal{L} f   \right).
\end{equation*}
We complete the proof.
\end{proof}

We set
\begin{equation*}
\mathcal{F}_{xy}:=\{f\in \Lip_{1}(V)  \mid \nabla_{xy}f=1\}.
\end{equation*}
Note that $\rho_{x}$ belongs to $\mathcal{F}_{xy}$.
We now prove the following characterization result
that has been obtained by M\"unch-Wojciechowski \cite{MW} in the undirected case (see Theorem 2.1 in \cite{MW}):
\begin{thm}\label{thm:limit free formula}
Let $x,y\in V$ with $x \neq y$.
Then we have
\begin{equation*}
\kappa(x,y) = \inf_{f\in \mathcal{F}_{xy}}  \nabla_{xy} \mathcal{L} f.
\end{equation*}
\end{thm}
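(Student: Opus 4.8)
The plan is to start from the Kantorovich--Rubinstein identity of Lemma \ref{lem:implication of Kantrovich duality}, namely
\begin{equation*}
\frac{\kappa_{\epsilon}(x,y)}{\epsilon}=\inf_{f\in \Lip_{1}(V)} \left( \frac{1}{\epsilon}\left(1-\nabla_{xy}f\right)+\nabla_{xy}\mathcal{L}f\right),
\end{equation*}
and to track the behavior of the minimizing $f$ as $\epsilon\to 0$. By Lemma \ref{lem:preconcavity} the quotient $\kappa_{\epsilon}(x,y)/\epsilon$ is non-increasing in $\epsilon$, so $\kappa(x,y)$ equals its supremum over $\epsilon\in(0,1]$, which is finite by Lemma \ref{lem:preboundedness}. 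The key structural observation is that every $f\in\Lip_{1}(V)$ satisfies $\nabla_{xy}f\le 1$, so the penalty $\frac{1}{\epsilon}\left(1-\nabla_{xy}f\right)$ is nonnegative and vanishes exactly when $f\in\mathcal{F}_{xy}$.

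For the upper bound I would simply restrict the infimum above to $f\in\mathcal{F}_{xy}$, where the penalty term drops out, so $\kappa_{\epsilon}(x,y)/\epsilon\le \inf_{f\in\mathcal{F}_{xy}}\nabla_{xy}\mathcal{L}f$ holds for every $\epsilon$; letting $\epsilon\to 0$ gives $\kappa(x,y)\le \inf_{f\in\mathcal{F}_{xy}}\nabla_{xy}\mathcal{L}f$ (note $\mathcal{F}_{xy}\ni\rho_{x}$ is nonempty).

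For the reverse inequality I would first note that the functional $f\mapsto \frac{1}{\epsilon}\left(1-\nabla_{xy}f\right)+\nabla_{xy}\mathcal{L}f$ depends on $f$ only through differences, hence is invariant under adding constants; after normalizing by $f(x)=0$, the set of $1$-Lipschitz functions on the finite set $V$ is compact, and the continuous functional attains its infimum at some $f_{\epsilon}$, so that
\begin{equation*}
\frac{\kappa_{\epsilon}(x,y)}{\epsilon}=\frac{1}{\epsilon}\left(1-\nabla_{xy}f_{\epsilon}\right)+\nabla_{xy}\mathcal{L}f_{\epsilon}\ge \nabla_{xy}\mathcal{L}f_{\epsilon}.
\end{equation*}
Choosing $\epsilon_{k}\to 0$ with $f_{\epsilon_{k}}\to f_{0}$ by compactness, I would argue that $f_{0}\in\mathcal{F}_{xy}$: the left-hand side converges to the finite number $\kappa(x,y)$ and $\nabla_{xy}\mathcal{L}f_{\epsilon_{k}}$ stays bounded on the compact set, so the nonnegative penalty remains bounded while $1/\epsilon_{k}\to\infty$, forcing $1-\nabla_{xy}f_{\epsilon_{k}}\to 0$ and hence $\nabla_{xy}f_{0}=1$. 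Passing to the limit in the displayed inequality then yields $\kappa(x,y)\ge \nabla_{xy}\mathcal{L}f_{0}\ge \inf_{f\in\mathcal{F}_{xy}}\nabla_{xy}\mathcal{L}f$, which combined with the upper bound gives equality.

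The main obstacle is the lower bound, where one must control the minimizers $f_{\epsilon}$ uniformly and guarantee that the limiting function lands in $\mathcal{F}_{xy}$. The compactness of the normalized class $\Lip_{1}(V)$ together with the blow-up of the penalty term are exactly what drive this argument, and this is the step where care is needed; everything else is a direct consequence of Lemmas \ref{lem:preconcavity}, \ref{lem:preboundedness}, and \ref{lem:implication of Kantrovich duality}.
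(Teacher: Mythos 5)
Your proposal is correct and follows essentially the same route as the paper's proof: the upper bound by restricting the Kantorovich-duality infimum of Lemma \ref{lem:implication of Kantrovich duality} to $\mathcal{F}_{xy}$, and the lower bound by normalizing to the compact set $\{f\in \Lip_{1}(V)\mid f(x)=0\}$, extracting minimizers $f_{\epsilon}$ and a convergent subsequence $f_{\epsilon_{k}}\to f_{0}$, and using the blow-up of the nonnegative penalty $\frac{1}{\epsilon}(1-\nabla_{xy}f)$ to force $f_{0}\in\mathcal{F}_{xy}$. Your explicit boundedness argument for why $1-\nabla_{xy}f_{\epsilon_{k}}\to 0$ is exactly the step the paper states more tersely via the existence of the limit $\kappa(x,y)$, so there is no substantive difference.
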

\begin{proof}
We will prove along the line of the proof of Theorem 2.1 in \cite{MW}.
From Lemma \ref{lem:implication of Kantrovich duality},
\begin{equation*}
\frac{\kappa_{\epsilon}(x,y)}{\epsilon}= \inf_{f\in \Lip_{1}(V)}  \left( \frac{1}{\epsilon}      \left(1-\nabla_{xy}f \right)+\nabla_{xy} \mathcal{L} f    \right)\leq \inf_{f\in \mathcal{F}_{xy}} \nabla_{xy}\mathcal{L}f.
\end{equation*}
Letting $\epsilon \to 0$,
we obtain $\kappa(x,y) \leq  \inf_{f\in \mathcal{F}_{xy}}  \nabla_{xy} \mathcal{L} f$.

Let us show the opposite inequality.
To do so,
we consider a subset of $\Lip_{1}(V)$ defined by
\begin{equation*}
\Lip_{1,x}(V):=\{ f\in \Lip_{1}(V) \mid f(x)=0  \},
\end{equation*}
which is compact with respect to the standard topology on $\mathbb{R}^{n}$ by the finiteness of $(V,\mu)$.
We first notice that
\begin{equation}\label{eq:modified implication of Kantrovich duality}
\frac{\kappa_{\epsilon}(x,y)}{\epsilon}=\inf_{f\in \Lip_{1,x}(V)} \Phi_{\epsilon}(f)
\end{equation}
for each $\epsilon>0$,
where $\Phi_{\epsilon}$ is a functional on $\Lip_{1}(V)$ defined by
\begin{equation*}
\Phi_{\epsilon}(f):=\frac{1}{\epsilon}(1-\nabla_{xy}f)+\nabla_{xy} \mathcal{L} f.
\end{equation*}
Indeed,
we see $\Phi_{\epsilon}(f)=\Phi_{\epsilon}(f+c)$ for all $f\in \Lip_{1}(V)$ and $c\in \mathbb{R}$,
and hence the right hand side of (\ref{eq:implication of Kantrovich duality}) agrees with that of (\ref{eq:modified implication of Kantrovich duality}) by taking $c=-f(x)$.
Lemma \ref{lem:implication of Kantrovich duality} implies (\ref{eq:modified implication of Kantrovich duality}).

Now,
by the compactness of $\Lip_{1,x}(V)$,
and the continuity of the functional $\Phi_{\epsilon}$,
there exists a function $f_{\varepsilon} \in \Lip_{1,x}(V)$ which attains the infimum in the right hand side of (\ref{eq:modified implication of Kantrovich duality}).
Using the compactness of $\Lip_{1,x}(V)$ again,
we can find a sequence $\{ \epsilon_k \}_{k=1}^{\infty}$ of positive numbers with $\epsilon_k \to 0$ as $k\to\infty$ such that $f_{\epsilon_k}$ converges to some $f_0 \in \Lip_{1,x}(V)$. 
The limit of $\kappa_{\epsilon}(x,y)/\varepsilon$ exists due to Lemmas \ref{lem:preconcavity} and \ref{lem:preboundedness},
and hence (\ref{eq:modified implication of Kantrovich duality}) yields
\begin{equation*}
\nabla_{xy} f_0 = \lim_{k \to \infty} \nabla_{xy} f_{\varepsilon_k} = 1.
\end{equation*}
This means $f_0 \in \mathcal{F}_{xy}$. 
Thus we conclude
\begin{align*}
\kappa(x,y) = \lim_{\epsilon\to 0} \left(  \frac{1}{\epsilon}(1-\nabla_{xy}f_{\epsilon})+\nabla_{xy} \mathcal{L} f_{\varepsilon} \right) \geq \lim_{k\to \infty} \nabla_{xy} \mathcal{L} f_{\varepsilon_k}  = \nabla_{xy} \mathcal{L} f_0 \geq \inf_{f \in \mathcal{F}_{xy}} \nabla_{xy} \mathcal{L} f,
\end{align*}
where the first inequality follows from $\nabla_{xy} f_{\varepsilon} \leq 1$.
This completes the proof.
\end{proof}

\section{Examples}\label{sec:Examples}
In the present section,
we consider some examples,
and calculate their Ricci curvature.
In view of Proposition \ref{prop:local to global},
we only calculate the Ricci curvature for edges.
For $K\in \mathbb{R}$,
we say that
$(V,\mu)$ has \textit{constant Ricci curvature $K$} if $\kappa(x,y)=K$ for all edges $(x, y) \in E$.
In this case
we write $\kappa(V,\mu)=K$.

We first present a directed graph of positive Ricci curvature.
\begin{ex}
For $n\geq 3$,
we consider the unweighted directed complete graph with $n$ vertices,
denoted by $K_n$ (see Figure \ref{fig:complete}).
Namely,
the vertex set is $\{x_{1},\dots,x_{n}\}$,
and its edge weight is given by
\begin{align*}
\mu_{xy} := \begin{cases}
		0 & \text{if $x=x_{i+1}$ and $y=x_{i}$ for $i=1,\dots,n-1$},\\
		0 & \text{if $x=x_{1}$ and $y=x_{n}$}, \\
		1 & \text{otherwise}.
		\end{cases}
\end{align*}
Then we have the following:
(1) $\kappa(K_{3})=3/2$;
(2) if $n=4$,
then we have
\begin{align*}
\kappa (x_1, x_i) = \begin{cases}
		1 & \text{if $i = 2$},\\
		\cfrac{3}{2} & \text{if $i=3$}; 
		\end{cases}
\end{align*}
(3) if $n=5$,
then we have
\begin{align*}
\kappa (x_1, x_i) = \begin{cases}
		1 & \text{if $i = 2$},\\
		\cfrac{7}{6} & \text{if $i=3$ or $4$}; 
		\end{cases}
\end{align*}
(4) if $n \geq 6$,
then we have
\begin{align*}
\kappa (x_1, x_i) = \begin{cases}
		1 & \text{if $i = 2$ or $i\in \{4, \dots, n-2\}$},\\
		1+\cfrac{1}{2 (n - 2)} & \text{if $i=3$ or $n-1$}.
		\end{cases}
\end{align*}
We explain the method of the proof of $\kappa(x_{1},x_{2})=1$ for $n\geq 4$.
Since this graph is $(n-2)$-regular,
the formula (\ref{eq:Euler mean trans}) yields
\begin{align*}
\nu^\epsilon_{x_1}(x_i) = \begin{cases}
		1 - \epsilon & \text{if $i = 1$,}\\
		\cfrac{\epsilon}{2 (n - 2)} & \text{if $i=2$ or $n$,}\\
		\cfrac{\epsilon}{n - 2} & \text{otherwise,}
		\end{cases}\quad
\nu^\epsilon_{x_2}(x_i) = \begin{cases}
		1 - \epsilon & \text{if $i = 2$,}\\
		\cfrac{\epsilon}{2 (n - 2)} & \text{if $i=1$ or $3$,}\\
		\cfrac{\epsilon}{n - 2} & \text{otherwise.}
		\end{cases}
\end{align*}
We define a coupling $\pi$ of $(\nu^{\epsilon}_{x_{1}},\nu^{\epsilon}_{x_{2}})$ as
\begin{align*}
\pi(z,w) := \begin{cases}
		(1- \epsilon) -\cfrac{\epsilon}{2(n-2)} & \text{if $(z,w)=(x_{1},x_{2})$},\\
		\cfrac{\epsilon}{2(n-2)} & \text{if $(z,w)=(x_{3},x_{n})$ or $(x_{1},x_{1})$ or $(x_{2},x_{2})$},\\
		\cfrac{\epsilon}{n-2} & \text{if $(z,w)=(x_{i},x_{i})$ for $i=3,\dots,n-1$},\\
		0 & \text{otherwise}. 
		\end{cases}
\end{align*}
Using this coupling and (\ref{eq:Wasserstein distance}),
we see $W(\nu^{\epsilon}_{x_{1}},\nu^{\epsilon}_{x_{2}})\leq 1-\epsilon$,
and hence $\kappa(x_{1},x_{2})\geq 1$.
Further,
we define a $1$-Lipschitz function $f:V \to \mathbb{R}$ by
\begin{align*}
f(z):=\begin{cases}
		1 & \text{if $z=x_{2}$ or $x_{n}$},\\
		0 & \text{otherwise}.
		\end{cases}
\end{align*}
By applying Proposition \ref{prop:Kantorovich duality} to $f$,
we obtain $\kappa(x_{1},x_{2})\leq 1$.
It follows that
$\kappa(x_{1},x_{2})=1$.
The other parts can be proved by the same argument,
and the proof is left to the readers.
By using (\ref{eq:Euler mean trans}),
we can also calculate that
for all $n\geq 3$,
and for all $i=1,\dots,n$
\begin{equation*}
\mathcal{H}_{x_{i}}=\olarrow{\mathcal{H}}_{x_{i}}=-\left(  1+\cfrac{1}{2 (n - 2)}  \right).
\end{equation*}
\end{ex}

\begin{figure}[http]
        \begin{center} 
          \includegraphics[scale=0.7]{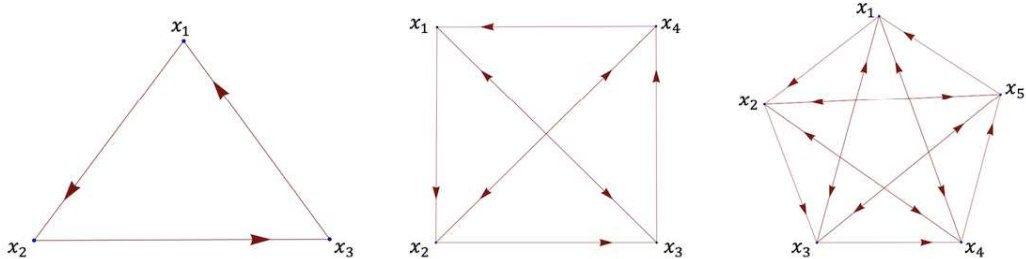}
          \caption{Directed complete graphs}
          \label{fig:complete}
        \end{center}
\end{figure}

We next present a flat directed graph.
\begin{ex}
For $n \geq 4$,
we consider the unweighted directed cycle with $n$ vertices,
denoted by $C_n$ (see Figure \ref{fig:cycle}).
Then $\kappa (C_n) = 0$.
For $i=1,\dots,n$,
let $x_{i}$ be vertexes as in Figure \ref{fig:cycle}.
It suffices to show that $\kappa(x_{1},x_{2})=0$.
Since this graph is $1$-regular,
(\ref{eq:Euler mean trans}) implies
\begin{align*}
\nu^\epsilon_{x_1}(x_i) = \begin{cases}
		1 - \epsilon & \text{if $i = 1$},\\
		\cfrac{\epsilon}{2} & \text{if $i=2$ or $n$},\\
		0 & \text{otherwise},
		\end{cases}\quad
\nu^\epsilon_{x_2}(x_i) = \begin{cases}
		1 - \epsilon & \text{if $i = 2$},\\
		\cfrac{\epsilon}{2} & \text{if $i=1$ or $3$},\\
		0 & \text{otherwise}.
		\end{cases}
\end{align*}
We define a coupling $\pi$ of $(\nu^{\epsilon}_{x_{1}},\nu^{\epsilon}_{x_{2}})$ as
\begin{align*}
\pi(z,w):= \begin{cases}
		1 - \epsilon & \text{if $(z,w)=(x_{1},x_{2})$},\\
		\cfrac{\epsilon}{2} & \text{if $(z,w)=(x_2,x_{3})$ or $(x_n,x_1)$},\\
		0 & \text{otherwise}. 
		\end{cases}
\end{align*}
By using this coupling and $(\ref{eq:Wasserstein distance})$
we possess $W(\nu^{\epsilon}_{x_{1}},\nu^{\epsilon}_{x_{2}})\leq 1$,
and thus $\kappa(x_{1},x_{2})\geq 0$.
We also define a $1$-Lipschitz function $f:V \to \mathbb{R}$ as
\begin{align*}
f(z) := \begin{cases}
		1 & \text{if $z= x_2$},\\
		0 & \text{if $z= x_1$},\\
		-1 & \text{if $z = x_n$},\\
		2 & \text{otherwise}.
		\end{cases}
\end{align*}
Applying Proposition \ref{prop:Kantorovich duality} to $f$ leads to $\kappa(x_{1},x_{2})\leq 0$.
We obtain $\kappa(x_{1},x_{2})= 0$.
From (\ref{eq:Euler mean trans}),
one can also deduce that
for all $i=1,\dots,n$,
\begin{equation*}
\mathcal{H}_{x_{i}}=\olarrow{\mathcal{H}}_{x_{i}}=-\frac{n}{2}.
\end{equation*}
\end{ex}

\begin{figure}[http]
        \begin{center} 
          \includegraphics[scale=0.7]{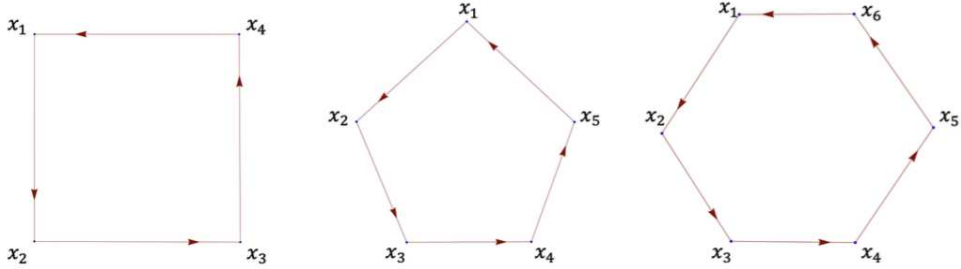}
          \caption{Directed cycles}
          \label{fig:cycle}
        \end{center}
\end{figure}

We also provide a directed graph with negatively curved edges.
\begin{ex}
We consider the unweighted directed graph as shown in Figure \ref{fig:negative}.
Then we have $\kappa(x,y)=-2$ for the vertexes $x,y$ as shown in Figure \ref{fig:negative}.
We can show this estimate as follows:
Since this graph is Eulerian,
the formula (\ref{eq:Euler mean trans}) tells us that
the two probability measures $\nu^{\epsilon}_x,\nu^{\epsilon}_y$ are given by
\begin{align*}
\nu^\epsilon_{x}(z) = \begin{cases}
		1 - \epsilon & \text{if $z= x$,}\\
		\cfrac{\epsilon}{4} & \text{if $z \in \mathcal{N}_{x}$,}\\
		0 & \text{otherwise},
		\end{cases}\quad 
\nu^\epsilon_{y}(z) = \begin{cases}
		1 - \epsilon & \text{if $z= y$,}\\
		\cfrac{\epsilon}{4} & \text{if $z \in \mathcal{N}_{y}$,}\\
		0 & \text{otherwise}.
		\end{cases}
\end{align*}
For $i=1,2,3$,
let $x_{i},y_{i}$ be as in Figure \ref{fig:negative}.
We define a coupling $\pi$ of $(\nu^{\epsilon}_{x},\nu^{\epsilon}_{y})$ by
\begin{align*}
\pi(z,w):=\begin{cases}
		1 - \epsilon & \text{if $(z,w)=(x,y)$},\\
		\cfrac{\epsilon}{4} & \text{if $(z,w)=(x_1,y_1)$ or $(x_2,y_{2})$ or $(x_3,x)$ or $(y,y_{3})$},\\
		0 & \text{otherwise}. 
		\end{cases}
\end{align*}
Then one can prove $W(\nu^{\epsilon}_{x},\nu^{\epsilon}_{y})\leq 1+2\epsilon$ by $(\ref{eq:Wasserstein distance})$,
and hence $\kappa(x,y)\geq -2$.
To check the opposite inequality,
we define a function $f:\mathcal{N}_{x} \cup \mathcal{N}_{y}\to \mathbb{R}$ by
\begin{align*}
f(z):=\begin{cases}
		7 & \text{if $z = y_3$},\\
		5 & \text{if $z=y_{1}$ or $y_{2}$},\\
		4 & \text{if $z = y$},\\
		3 & \text{if $z = x$},\\
		2 & \text{if $z=x_{1}$ or $x_{2}$},\\
		0 & \text{if $z=x_{3}$},\\
		\end{cases}
\end{align*}
which is $1$-Lipschitz on $\mathcal{N}_{x} \cup \mathcal{N}_{y}$.
We can extend $f$ to a $1$-Lipschitz function over $V$.
Applying Proposition \ref{prop:Kantorovich duality} to it,
we obtain $\kappa(x,y)\leq-2$.
This proves $\kappa(x,y)=-2$.
\end{ex}

\begin{figure}
        \begin{center} 
          \includegraphics[scale=1.0]{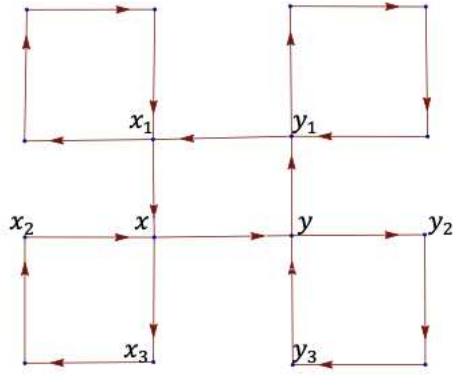}
          \caption{Directed graph with negatively curved edges}
          \label{fig:negative}
        \end{center}
\end{figure}

\section{Products}\label{sec:Products of directed graphs}
This section is devoted to the calculation of our Ricci curvature for the weighted Cartesian product of directed graphs.
We consider the weighted Cartesian product of $(V,\mu)$,
and another simple, strongly connected, finite weighted directed graph $(V',\mu')$.

\subsection{Weighted Cartesian products}
We recall the notion of the weighted Cartesian product.
For two parameters $\alpha, \beta>0$,
the \textit{$(\alpha,\beta)$-weighted Cartesian product $(V, \mu) \square_{(\alpha,\beta)} (V', \mu')$ of $(V,\mu)$ and $(V',\mu')$} is defined as follows (cf. Subsection 2.6 in \cite{C-book}, Definition 2.17 in \cite{G}, and Remark \ref{rem:weighted Cart} below):
Its vertex set is $V\times V'$,
and its edge weight is given by
\begin{equation}\label{eq:product weight}
\mu_{(\alpha,\beta)}(\mathbf{x},\mathbf{y}):=\beta\mu'(x')\mu_{xy}\,\delta_{x'}(y')+\alpha\mu(x)\mu'_{x'y'}\,\delta_x(y)
\end{equation}
for $\mathbf{x}=(x,x'), \mathbf{y}=(y,y') \in V\times V'$,
which can also be written as
\begin{equation*}
\mu_{(\alpha,\beta)}(\mathbf{x},\mathbf{y})=\begin{cases}
                \beta \mu'(x')\mu_{xy} & \text{if $x'=y'$},\\
		\alpha\mu(x)\mu'_{x'y'} & \text{if $x=y$},\\
		0 & \text{otherwise}.
		\end{cases}
\end{equation*}
Here $\mu'(x')$ denotes the vertex weight at $x'$ on $(V',\mu')$,
and we will denote by $\mu_{(\alpha,\beta)}(\mathbf{x})$ the vertex weight at $\mathbf{x}$ on $(V, \mu) \square_{(\alpha,\beta)} (V', \mu')$.
We immediately see that
$\mu_{(\alpha,\beta)}(\mathbf{x},\mathbf{x})=0$ since $(V,\mu)$ and $(V',\mu')$ are simple;
in particular,
this weighted Cartesian product is also simple.
We also observe that
$\mu_{(\alpha,\beta)}(\mathbf{x},\mathbf{y})>0$ if and only if either
(1) $\mu'_{x'y'}>0$ and $x=y$;
or (2) $\mu_{xy}>0$ and $x'=y'$,
and hence
$\mathbf{x}\rightarrow \mathbf{y}$ if and only if either
(1) $x' \rightarrow y'$ and $x=y$;
or (2) $x \rightarrow y$ and $x'=y'$.
Therefore,
the strongly connectedness of $(V,\mu)$ and $(V',\mu')$ tells us that
this weighted Cartesian product is also strongly connected.
Moreover,
its distance function $d_{(\alpha,\beta)}:(V\times V')\times (V\times V')\to [0,\infty)$ can be expressed as
\begin{equation}\label{eq:product distance}
d_{(\alpha,\beta)}(\mathbf{x},\mathbf{y})=d(x,y)+d'(x',y')
\end{equation}
for the distance functions $d$ and $d'$ on $(V,\mu)$ and $(V',\mu')$,
respectively.
In other words,
$d_{(\alpha,\beta)}$ is the $l^{1}$-distance function.

\begin{rem}\label{rem:weighted Cart}
In \cite{C-book},
the weighted Cartesian product has been formulated as $(V, \mu) \square_{(\alpha,\beta)} (V', \mu')$ with $\alpha \equiv 1$ and $\beta \equiv 1$ in our notation (see Subsection 2.6 in \cite{C-book}).
Also,
in \cite{G},
it has been done as $(V, \mu) \square_{(\alpha,\beta)} (V', \mu')$,
which generalizes that in \cite{C-book} (see Definition 2.7 in \cite{G}).
When $(V,\mu)$ is $r$-regular and $(V',\mu')$ is $r'$-regular,
the formulation in \cite{G} agrees with the standard Cartesian product by taking $\alpha=1/r$ and $\beta=1/r'$ (see Lemma 2.19 in \cite{G}).
\end{rem}

We now summarize some formulas on the weighted Cartesian product $(V, \mu) \square_{(\alpha,\beta)} (V', \mu')$ (cf. Lemma 2.18 in \cite{G}).
We denote by $P', \mathfrak{m}', \olarrow{P}', \mathcal{P}'$
the transition probability kernel,
the Perron measure,
the reverse transition probability kernel,
the mean transition probability kernel on $(V',\mu')$,
respectively.
Similarly,
we denote by $P_{(\alpha,\beta)}, \mathfrak{m}_{(\alpha,\beta)}, \olarrow{P}_{(\alpha,\beta)}, \mathcal{P}_{(\alpha,\beta)}$
the transition probability kernel,
the Perron measure,
the reverse transition probability kernel,
the mean transition probability kernel on $(V, \mu) \square_{(\alpha,\beta)} (V', \mu')$,
respectively.
\begin{lem}\label{lem:product formula}
For $\mathbf{x}=(x,x'), \mathbf{y}=(y,y') \in V\times V'$,
we have
\begin{align}\label{eq:product vertex weight}
\mu_{(\alpha,\beta)}(\mathbf{x})& = (\alpha+\beta)\mu(x)\mu'(x'),\\ \label{eq:product trans prob}
P_{(\alpha,\beta)}(\mathbf{x} ,\mathbf{y} )&= \frac{\beta}{\alpha+\beta}P(x,y)\delta_{x'}(y') + \frac{\alpha}{\alpha+\beta}P'(x',y')\delta_{x}(y),\\ \label{eq:product Perron}
\mathfrak{m}_{(\alpha,\beta)}(\mathbf{x})&= \mathfrak{m}(x) \mathfrak{m}'(x'),\\ \label{eq:product reverse prob}
\olarrow{P}_{(\alpha,\beta)}(\mathbf{x} ,\mathbf{y})&=\frac{\beta}{\alpha+\beta}\olarrow{P}(x,y)\delta_{x'}(y') +\frac{\alpha}{\alpha+\beta}\olarrow{P}'(x',y') \delta_x(y),\\ \label{eq:product mean prob}
\mathcal{P}_{(\alpha,\beta)}(\mathbf{x} ,\mathbf{y}) &=\frac{\beta}{\alpha+\beta} \mathcal{P}(x,y)\delta_{x'}(y') + \frac{\alpha}{\alpha+\beta}\mathcal{P}'(x',y')\delta_{x}(y).
\end{align}
\end{lem}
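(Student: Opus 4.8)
The five formulas in Lemma \ref{lem:product formula} are linked by a clear logical chain, so the plan is to prove them in the order \eqref{eq:product vertex weight}, \eqref{eq:product trans prob}, \eqref{eq:product Perron}, \eqref{eq:product reverse prob}, \eqref{eq:product mean prob}, using each formula as input to the next. First I would verify \eqref{eq:product vertex weight} by a direct summation: since $\mu_{(\alpha,\beta)}(\mathbf{x},\mathbf{y})$ is nonzero only when $\mathbf{x}$ and $\mathbf{y}$ agree in one coordinate and are adjacent in the other, summing the defining expression \eqref{eq:product weight} over all $\mathbf{y}=(y,y')$ splits into a sum over $y$ with $x'=y'$ fixed and a sum over $y'$ with $x=y$ fixed, yielding $\beta\mu'(x')\mu(x)+\alpha\mu(x)\mu'(x')=(\alpha+\beta)\mu(x)\mu'(x')$. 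Formula \eqref{eq:product trans prob} then follows immediately by dividing \eqref{eq:product weight} by \eqref{eq:product vertex weight} according to the definition \eqref{eq:Markov kernel} of the transition kernel.

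\textbf{The Perron measure identity.}
The step requiring the most care is \eqref{eq:product Perron}, because the Perron measure is characterized only implicitly through the eigenvalue equation \eqref{eq:Perron Frobenius} together with the normalization to a probability measure, and the Perron--Frobenius theorem guarantees uniqueness. The plan is therefore to \emph{verify} that the candidate $\mathfrak{m}(x)\mathfrak{m}'(x')$ satisfies both requirements and then invoke uniqueness. To check the eigenvalue equation I would compute
\begin{equation*}
\sum_{\mathbf{y}}\mathfrak{m}(y)\mathfrak{m}'(y')P_{(\alpha,\beta)}(\mathbf{y},\mathbf{x}),
\end{equation*}
substitute \eqref{eq:product trans prob} at $(\mathbf{y},\mathbf{x})$, and split the resulting sum into the two coordinate contributions. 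The first contribution, carrying the factor $\beta/(\alpha+\beta)$, forces $y'=x'$ and reduces to $\mathfrak{m}'(x')\sum_{y}\mathfrak{m}(y)P(y,x)=\mathfrak{m}'(x')\mathfrak{m}(x)$ by \eqref{eq:Perron Frobenius} on $(V,\mu)$; the second, with factor $\alpha/(\alpha+\beta)$, symmetrically gives $\mathfrak{m}(x)\mathfrak{m}'(x')$. Adding them and using $\beta/(\alpha+\beta)+\alpha/(\alpha+\beta)=1$ reproduces $\mathfrak{m}(x)\mathfrak{m}'(x')$, so the eigenvalue equation holds. Since $\mathfrak{m}$ and $\mathfrak{m}'$ are probability measures, their product sums to $1$ over $V\times V'$, so the normalization holds as well, and uniqueness finishes the step.

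\textbf{The reverse and mean kernels.}
With \eqref{eq:product Perron} in hand, the reverse kernel \eqref{eq:product reverse prob} is a mechanical computation from its definition in \eqref{eq:perron vector}: I would write
\begin{equation*}
\olarrow{P}_{(\alpha,\beta)}(\mathbf{x},\mathbf{y})=\frac{\mathfrak{m}_{(\alpha,\beta)}(\mathbf{y})}{\mathfrak{m}_{(\alpha,\beta)}(\mathbf{x})}P_{(\alpha,\beta)}(\mathbf{y},\mathbf{x}),
\end{equation*}
substitute the product form of $\mathfrak{m}_{(\alpha,\beta)}$ and the expression \eqref{eq:product trans prob} for $P_{(\alpha,\beta)}(\mathbf{y},\mathbf{x})$, and observe that in each of the two coordinate terms the Kronecker deltas force one coordinate to coincide, so the ratio $\mathfrak{m}(y)\mathfrak{m}'(y')/(\mathfrak{m}(x)\mathfrak{m}'(x'))$ collapses to the single-factor ratio $\mathfrak{m}(y)/\mathfrak{m}(x)$ or $\mathfrak{m}'(y')/\mathfrak{m}'(x')$, matching the definition of $\olarrow{P}$ and $\olarrow{P}'$. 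Finally \eqref{eq:product mean prob} follows at once by averaging, since $\mathcal{P}=\tfrac12(P+\olarrow{P})$ and the maps $(P,\olarrow{P})\mapsto\mathcal{P}$ commute with the coordinatewise decomposition exhibited in \eqref{eq:product trans prob} and \eqref{eq:product reverse prob}. The only genuine obstacle is the uniqueness argument for \eqref{eq:product Perron}; everything else is bookkeeping with the splitting induced by the Kronecker deltas $\delta_{x'}(y')$ and $\delta_x(y)$.
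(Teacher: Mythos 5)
Your proposal is correct and follows essentially the same route as the paper's proof: direct summation for the vertex weight, division for the transition kernel, verification of the Perron--Frobenius equation plus uniqueness for the Perron measure (the paper likewise checks the eigenvalue identity for the candidate $\mathfrak{m}(x)\mathfrak{m}'(x')$), and delta-collapsing computations for the reverse and mean kernels. Your explicit mention of the normalization condition is a small extra care the paper leaves implicit, but the argument is the same.
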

\begin{proof}
We begin with the proof of (\ref{eq:product vertex weight}).
By (\ref{eq:product weight}),
it holds that
\begin{align*}
\mu_{(\alpha,\beta)}(\mathbf{x})&=\sum_{\mathbf{z}\in V\times V'}\mu_{(\alpha,\beta)}(\mathbf{x},\mathbf{z})
                                                     =\sum_{z\in V,z'\in V'}\left(\beta\mu'(x')\mu_{xz}\,\delta_{x'}(z') +\alpha \mu(x)\mu'_{x'z'}\,\delta_x(z) \right)\\
                                                  &=\beta\mu'(x') \sum_{z\in V}\mu_{xz}+\alpha\mu(x) \sum_{z'\in V'} \mu'_{x'z'}=(\beta+\alpha)\mu(x)\mu'(x')
\end{align*}
for $\mathbf{z}=(z,z')\in V\times V'$,
and hence (\ref{eq:product vertex weight}).
Further,
(\ref{eq:product trans prob}) directly follows from (\ref{eq:product weight}), (\ref{eq:product vertex weight}).

We next show (\ref{eq:product Perron}).
We define a probability measure $\widetilde{\mathfrak{m}}:V\times V'\to (0,1]$ by
\begin{equation*}
\widetilde{\mathfrak{m}}(\mathbf{z})=\mathfrak{m}(z) \mathfrak{m}'(z').
\end{equation*}
In virtue of the uniqueness of the Perron measure,
it suffices to verify
\begin{equation}\label{eq:check product Perron}
\widetilde{\mathfrak{m}}(\mathbf{x})=\sum_{\mathbf{z}\in V\times V'}\widetilde{\mathfrak{m}}(\mathbf{z})P_{(\alpha,\beta)}(\mathbf{z},\mathbf{x}).
\end{equation}
Using (\ref{eq:product trans prob}),
we can calculate the right hand side of (\ref{eq:check product Perron}) as follows:
\begin{align}\notag
&\quad\,\, \sum_{\mathbf{z}\in V\times V'}\widetilde{\mathfrak{m}}(\mathbf{z})P_{(\alpha,\beta)}(\mathbf{z},\mathbf{x})\\ \notag
& = \sum_{z\in V,z'\in V'} \mathfrak{m}(z) \mathfrak{m}'(z')\left( \frac{\beta}{\alpha+\beta}P(z,x)\delta_{x'}(z') +\frac{\alpha}{\alpha+\beta}P'(z',x')\delta_{x}(z) \right)\\ \notag 
& =\frac{\beta}{\alpha+\beta}\mathfrak{m}'(x')\sum_{z\in V} \mathfrak{m}(z) P(z,x) + \frac{\alpha}{\alpha+\beta}\mathfrak{m}(x)\sum_{z' \in V'} \mathfrak{m}'(z') P'(z',x') \\  \label{eq:check product Perron 2}
& =\frac{\beta}{\alpha+\beta}\mathfrak{m}'(x')\mathfrak{m}(x) +\frac{\alpha}{\alpha+\beta}\mathfrak{m}(x) \mathfrak{m}'(x')= \mathfrak{m}(x) \mathfrak{m}'(x')=\widetilde{\mathfrak{m}}(\mathbf{x}),
\end{align}
and thus (\ref{eq:check product Perron}).
Here we used the fact that
$\mathfrak{m}$ and $\mathfrak{m}'$ are the Perron measures in (\ref{eq:check product Perron 2}).

Let us prove (\ref{eq:product reverse prob}).
Combining (\ref{eq:product trans prob}) and (\ref{eq:product Perron}),
we have
\begin{align*}
\olarrow{P}_{(\alpha,\beta)}(\mathbf{x} ,\mathbf{y})&=\frac{  \mathfrak{m}_{(\alpha,\beta)}(\mathbf{y})}{\mathfrak{m}_{(\alpha,\beta)}(\mathbf{x})}P_{(\alpha,\beta)}(\mathbf{y} ,\mathbf{x} )\\
&=\frac{\mathfrak{m}(y) \mathfrak{m}'(y')}{\mathfrak{m}(x) \mathfrak{m}'(x')}\left( \frac{\beta}{\alpha+\beta} P(y,x)\delta_{y'}(x') + \frac{\alpha}{\alpha+\beta}P'(y',x')\delta_{y}(x)\right)\\
&=\frac{\beta}{\alpha+\beta}\olarrow{P}(x,y)  \frac{\mathfrak{m}'(y')}{\mathfrak{m}'(x')}   \delta_{y'}(x') +\frac{\alpha}{\alpha+\beta}\olarrow{P}'(x',y')\frac{\mathfrak{m}(y)}{\mathfrak{m}(x)} \delta_{y}(x)\\
&=\frac{\beta}{\alpha+\beta}\olarrow{P}(x,y) \delta_{x'}(y') + \frac{\alpha}{\alpha+\beta}\olarrow{P}'(x',y')\delta_{x}(y).
\end{align*}
This proves (\ref{eq:product reverse prob}).
The last one (\ref{eq:product mean prob}) is an immediate consequence of (\ref{eq:product trans prob}) and (\ref{eq:product reverse prob}).
\end{proof}

Let $\epsilon \in [0,1]$ and $\mathbf{x}=(x,x')\in V\times V'$.
We next examine a probability measure $\nu_{(\alpha,\beta),\mathbf{x}}^{\varepsilon}:V\times V'\to [0,1]$ on $(V, \mu) \square_{(\alpha,\beta)} (V', \mu')$ defined as (\ref{eq:random walk}).
Let $(\nu^{\epsilon}_{x'})':V'\to [0,1]$ stand for a probability measure over $(V',\mu')$ defined as (\ref{eq:random walk}).
\begin{lem}\label{lem:product prob meas}
Let $\epsilon \in [0,1]$ and $\mathbf{x}=(x,x')\in V\times V'$.
For every $\mathbf{z}=(z,z')\in V\times V'$,
\begin{equation*}
\nu_{(\alpha,\beta),\mathbf{x}}^{\varepsilon}(\mathbf{z})=\frac{\beta}{\alpha+\beta}\nu^{\epsilon}_{x|x'}(\mathbf{z})+\frac{\alpha}{\alpha+\beta}(\nu^{\epsilon}_{x'|x})'(\mathbf{z}),
\end{equation*}
where two probability measures $\nu^{\epsilon}_{x|x'},(\nu^{\epsilon}_{x'|x})':(V\times V')\times (V\times V')\to [0,1]$ are defined as
\begin{equation}\label{eq:product sub prob}
\nu^{\epsilon}_{x|x'}(\mathbf{z}):=\nu^{\epsilon}_{x}(z)\delta_{x'}(z'),\quad (\nu^{\epsilon}_{x'|x})'(\mathbf{z}):=(\nu^{\epsilon}_{x'})'(z')\delta_{x}(z).
\end{equation}
\end{lem}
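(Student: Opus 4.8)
The plan is to reduce the identity to two decompositions that are already available in the excerpt: the additive form (\ref{eq:modified random walk}) of the random walk measure, and the product formula (\ref{eq:product mean prob}) for the mean transition probability kernel on $(V, \mu) \square_{(\alpha,\beta)} (V', \mu')$. First I would rewrite the measure in question in its additive form,
\begin{equation*}
\nu^{\epsilon}_{(\alpha,\beta),\mathbf{x}}(\mathbf{z})=(1-\epsilon)\delta_{\mathbf{x}}(\mathbf{z})+\epsilon\,\mathcal{P}_{(\alpha,\beta)}(\mathbf{x},\mathbf{z}),
\end{equation*}
and expand the two auxiliary measures from (\ref{eq:product sub prob}) by applying (\ref{eq:modified random walk}) on each factor:
\begin{equation*}
\nu^{\epsilon}_{x|x'}(\mathbf{z})=\bigl((1-\epsilon)\delta_{x}(z)+\epsilon\,\mathcal{P}(x,z)\bigr)\delta_{x'}(z'),\quad (\nu^{\epsilon}_{x'|x})'(\mathbf{z})=\bigl((1-\epsilon)\delta_{x'}(z')+\epsilon\,\mathcal{P}'(x',z')\bigr)\delta_{x}(z).
\end{equation*}

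Next I would form the convex combination $\tfrac{\beta}{\alpha+\beta}\nu^{\epsilon}_{x|x'}(\mathbf{z})+\tfrac{\alpha}{\alpha+\beta}(\nu^{\epsilon}_{x'|x})'(\mathbf{z})$ and split it into the part proportional to $(1-\epsilon)$ and the part proportional to $\epsilon$. In the $(1-\epsilon)$ part, both summands carry the common factor $\delta_{x}(z)\delta_{x'}(z')=\delta_{\mathbf{x}}(\mathbf{z})$, so the weights recombine through $\tfrac{\beta}{\alpha+\beta}+\tfrac{\alpha}{\alpha+\beta}=1$ to produce exactly $(1-\epsilon)\delta_{\mathbf{x}}(\mathbf{z})$. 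In the $\epsilon$ part, the two remaining pieces are precisely the two terms on the right-hand side of (\ref{eq:product mean prob}), hence assemble into $\epsilon\,\mathcal{P}_{(\alpha,\beta)}(\mathbf{x},\mathbf{z})$. Adding the two contributions recovers the additive form of $\nu^{\epsilon}_{(\alpha,\beta),\mathbf{x}}(\mathbf{z})$ displayed above, which is the assertion.

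This is a bookkeeping computation rather than a result with a genuine obstacle; the single point that must be handled with care is the factorization of the Dirac mass on the product graph, namely $\delta_{\mathbf{x}}(\mathbf{z})=\delta_{x}(z)\delta_{x'}(z')$, since this is what lets the two weights collapse to $1$ on the diagonal contribution. Once that observation is in place, the $\epsilon$-linear contribution is governed verbatim by Lemma \ref{lem:product formula}, in particular by (\ref{eq:product mean prob}), and no further input is required.
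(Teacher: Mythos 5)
Your proof is correct and follows essentially the same route as the paper's: both rest on the additive form (\ref{eq:modified random walk}), the product formula (\ref{eq:product mean prob}), the factorization $\delta_{\mathbf{x}}(\mathbf{z})=\delta_{x}(z)\delta_{x'}(z')$, and the recombination of the weights $\tfrac{\beta}{\alpha+\beta}+\tfrac{\alpha}{\alpha+\beta}=1$. The only difference is that you expand the right-hand side and collapse it to the left, whereas the paper expands the left-hand side and regroups it into the right --- the same computation read in the opposite direction.
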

\begin{proof}
In view of the expression (\ref{eq:modified random walk}),
the formula (\ref{eq:product mean prob}) implies
\begin{align*}
\nu_{(\alpha,\beta),\mathbf{x}}^{\varepsilon}(\mathbf{z})&=(1-\epsilon)\delta_{\mathbf{x}}(\mathbf{z})+\epsilon\,\mathcal{P}_{(\alpha,\beta)}(\mathbf{x},\mathbf{z})\\
                                                                  &=(1-\epsilon)\delta_{x}(z)\delta_{x'}(z')
                                                                   +\epsilon\,\left( \frac{\beta}{\alpha+\beta} \mathcal{P}(x,z)\delta_{x'}(z') +\frac{\alpha}{\alpha+\beta}\mathcal{P}'(x',z')\delta_{x}(z) \right)\\
                                                                  &=\frac{\beta}{\alpha+\beta}((1-\epsilon)\delta_{x}(z)+\epsilon\,\mathcal{P}(x,z))\delta_{x'}(z')\\
                                                                  &\quad +\frac{\alpha}{\alpha+\beta}((1-\epsilon)\delta_{x'}(z')+\epsilon\,\mathcal{P}'(x',z'))\delta_{x}(z)\\
                                                                  &=\frac{\beta}{\alpha+\beta}\nu^{\epsilon}_{x|x'}(\mathbf{z})+\frac{\alpha}{\alpha+\beta}(\nu^{\epsilon}_{x'|x})'(\mathbf{z}).
\end{align*}
This completes the proof.
\end{proof}

We further investigate the Laplacian.
Let $\mathcal{L}'$ and $\mathcal{L}_{(\alpha,\beta)}$ be the Laplacian over $(V',\mu')$ and $(V,\mu) \square_{(\alpha,\beta)} (V',\mu')$,
respectively.
\begin{lem}\label{lem:product Laplacian}
For $f:V\to \mathbb{R}$ and $f':V'\to \mathbb{R}$,
we define a function $\mathbf{f}:V\times V'\to \mathbb{R}$ by
\begin{equation*}
\mathbf{f}(\mathbf{x}):=f(x)+f'(x')
\end{equation*}
for $\mathbf{x}=(x,x')\in V\times V'$.
Then for every $\mathbf{x}=(x,x')\in V\times V'$
we have
\begin{equation*}
\mathcal{L}_{(\alpha,\beta)}\mathbf{f}(\mathbf{x})=\frac{\beta}{\alpha+\beta}\mathcal{L}f(x)+\frac{\alpha}{\alpha+\beta}\mathcal{L}'f'(x').
\end{equation*}
\end{lem}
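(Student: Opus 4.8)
The plan is to substitute the definition (\ref{eq:Chung Laplacian}) of the Chung Laplacian on the product together with the product formula (\ref{eq:product mean prob}) for the mean transition probability kernel, and then let the Dirac masses $\delta_{x'}(y')$ and $\delta_x(y)$ collapse the double sum over $V\times V'$ into two single sums, one over $V$ and one over $V'$.

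Concretely, I would start from
\begin{equation*}
\mathcal{L}_{(\alpha,\beta)}\mathbf{f}(\mathbf{x})=\mathbf{f}(\mathbf{x})-\sum_{\mathbf{y}\in V\times V'}\mathcal{P}_{(\alpha,\beta)}(\mathbf{x},\mathbf{y})\mathbf{f}(\mathbf{y}),
\end{equation*}
insert $\mathbf{f}(\mathbf{y})=f(y)+f'(y')$ and the expression (\ref{eq:product mean prob}), and split the resulting sum into a $\beta/(\alpha+\beta)$-part carrying $\delta_{x'}(y')$ and an $\alpha/(\alpha+\beta)$-part carrying $\delta_x(y)$. In the first part the factor $\delta_{x'}(y')$ forces $y'=x'$, leaving $\frac{\beta}{\alpha+\beta}\sum_{y\in V}\mathcal{P}(x,y)\bigl(f(y)+f'(x')\bigr)$; in the second part $\delta_x(y)$ forces $y=x$, leaving $\frac{\alpha}{\alpha+\beta}\sum_{y'\in V'}\mathcal{P}'(x',y')\bigl(f(x)+f'(y')\bigr)$.

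The key algebraic input is that $\mathcal{P}(x,\cdot)$ and $\mathcal{P}'(x',\cdot)$ are genuine probability kernels, i.e.\ $\sum_{y\in V}\mathcal{P}(x,y)=1$ and $\sum_{y'\in V'}\mathcal{P}'(x',y')=1$; this is where the Perron property (\ref{eq:Perron Frobenius}) enters, since $\sum_y \olarrow{P}(x,y)=\mathfrak{m}(x)^{-1}\sum_y \mathfrak{m}(y)P(y,x)=1$. Using these identities, the cross terms simplify: $\frac{\beta}{\alpha+\beta}f'(x')\sum_y\mathcal{P}(x,y)=\frac{\beta}{\alpha+\beta}f'(x')$ and $\frac{\alpha}{\alpha+\beta}f(x)\sum_{y'}\mathcal{P}'(x',y')=\frac{\alpha}{\alpha+\beta}f(x)$.

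Finally, writing $\mathbf{f}(\mathbf{x})=f(x)+f'(x')$ and using $\frac{\beta}{\alpha+\beta}+\frac{\alpha}{\alpha+\beta}=1$ to split it as $\bigl(\frac{\beta}{\alpha+\beta}+\frac{\alpha}{\alpha+\beta}\bigr)(f(x)+f'(x'))$, I would collect the $f(x)$- and $f'(x')$-terms: the $f(x)$ contributions combine to $\frac{\beta}{\alpha+\beta}f(x)$ and the $f'(x')$ contributions to $\frac{\alpha}{\alpha+\beta}f'(x')$, so that
\begin{equation*}
\mathcal{L}_{(\alpha,\beta)}\mathbf{f}(\mathbf{x})=\frac{\beta}{\alpha+\beta}\Bigl(f(x)-\sum_{y\in V}\mathcal{P}(x,y)f(y)\Bigr)+\frac{\alpha}{\alpha+\beta}\Bigl(f'(x')-\sum_{y'\in V'}\mathcal{P}'(x',y')f'(y')\Bigr),
\end{equation*}
which is precisely $\frac{\beta}{\alpha+\beta}\mathcal{L}f(x)+\frac{\alpha}{\alpha+\beta}\mathcal{L}'f'(x')$ by (\ref{eq:Chung Laplacian}). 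There is no real obstacle here beyond bookkeeping; the only point requiring care is the appearance of the cross terms $f'(x')$ and $f(x)$, whose cancellation against the additive splitting of $\mathbf{f}(\mathbf{x})$ rests on the two kernels being probability measures.
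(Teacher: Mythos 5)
Your proposal is correct and follows essentially the same route as the paper's proof: substitute the product formula for $\mathcal{P}_{(\alpha,\beta)}$, collapse the double sum via the Dirac factors, and use the normalization $\sum_{y}\mathcal{P}(x,y)=\sum_{y'}\mathcal{P}'(x',y')=1$ so that the cross terms $f'(x')$ and $f(x)$ recombine with the additive splitting of $\mathbf{f}(\mathbf{x})$. Your explicit remark that the normalization of $\olarrow{P}$ rests on the Perron property (\ref{eq:Perron Frobenius}) is a point the paper leaves implicit, but it is the same argument.
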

\begin{proof}
The formula (\ref{eq:product mean prob}) yields
\begin{align*}
&\quad\,\, \sum_{\mathbf{y}\in V\times V'}\mathcal{P}_{(\alpha,\beta)}(\mathbf{x},\mathbf{y})\mathbf{f}(\mathbf{y})\\
&=\sum_{y\in V,y'\in V'}\left(\frac{\beta}{\alpha+\beta}\mathcal{P}(x,y)\delta_{x'}(y') +\frac{\alpha}{\alpha+\beta}\mathcal{P}'(x',y')\delta_{x}(y)  \right)(f(y)+f'(y'))\\
&=\frac{\beta}{\alpha+\beta}\sum_{y\in V}\mathcal{P}(x,y)(f(y)+f'(x')) +\frac{\alpha}{\alpha+\beta}\sum_{y'\in V'} \mathcal{P}'(x',y')(f(x)+f'(y'))\\
&=\frac{\alpha}{\alpha+\beta}f(x)+\frac{\beta}{\alpha+\beta}f'(x')
 +\frac{\beta}{\alpha+\beta} \sum_{y\in V}\mathcal{P}(x,y)f(y) +\frac{\alpha}{\alpha+\beta}\sum_{y'\in V'} \mathcal{P}'(x',y')f'(y').
\end{align*}
The above equality implies
\begin{align*}
\mathcal{L}_{(\alpha,\beta)}\mathbf{f}(\mathbf{x})&=\mathbf{f}(\mathbf{x})-\sum_{\mathbf{y}\in V\times V'}\mathcal{P}_{(\alpha,\beta)}(\mathbf{x},\mathbf{y})\mathbf{f}(\mathbf{y})\\
&=\frac{\beta}{\alpha+\beta}\left(f(x)- \sum_{y\in V}\mathcal{P}(x,y)f(y)    \right) +\frac{\alpha}{\alpha+\beta}\left(f'(x')- \sum_{y'\in V}\mathcal{P}'(x',y')f'(y')    \right)\\
&=\frac{\beta}{\alpha+\beta} \mathcal{L}f(x)+\frac{\alpha}{\alpha+\beta}\mathcal{L}'f'(x').
\end{align*}
We arrive at the desired formula.
\end{proof}

We end this subsection with formulas for asymptotic mean curvature.
Let $\mathbf{x}=(x,x'), \mathbf{y}=(y,y') \in V\times V'$.
We denote by $\mathcal{H}'_{x'},\olarrow{\mathcal{H}}'_{x'},\mathcal{H}'(x',y')$
the asymptotic mean curvature around $x'$,
the reverse asymptotic mean curvature,
the mixed asymptotic mean curvature over $(V',\mu')$,
respectively.
Also,
let $\mathcal{H}_{(\alpha,\beta),\mathbf{x}},\olarrow{\mathcal{H}}_{(\alpha,\beta),\mathbf{x}},\mathcal{H}_{(\alpha,\beta)}(\mathbf{x},\mathbf{y})$ stand for
the asymptotic mean curvature around $\mathbf{x}$,
the reverse asymptotic mean curvature,
the mixed asymptotic mean curvature over $(V, \mu) \square_{(\alpha,\beta)} (V', \mu')$,
respectively.
\begin{prop}\label{prop:product mean curv}
For $\mathbf{x}=(x,x'), \mathbf{y}=(y,y') \in V\times V'$,
we have
\begin{align*}
\mathcal{H}_{(\alpha,\beta),\mathbf{x}}&=\frac{\beta}{\alpha+\beta}\mathcal{H}_{x}+\frac{\alpha}{\alpha+\beta}\mathcal{H}'_{x'},\\
\olarrow{\mathcal{H}}_{(\alpha,\beta),\mathbf{x}}&=\frac{\beta}{\alpha+\beta}\olarrow{\mathcal{H}}_{x}+\frac{\alpha}{\alpha+\beta}\olarrow{\mathcal{H}}'_{x'},\\
\mathcal{H}_{(\alpha,\beta)}(\mathbf{x},\mathbf{y})&=\frac{\beta}{\alpha+\beta}\mathcal{H}(x,y)+\frac{\alpha}{\alpha+\beta}\mathcal{H}'(x',y').
\end{align*}
\end{prop}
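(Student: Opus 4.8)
The plan is to reduce all three formulas to Lemma \ref{lem:product Laplacian} by exploiting the additive splitting of the product distance. The crucial observation is that, by (\ref{eq:product distance}), the distance function from $\mathbf{x}=(x,x')$ on $(V,\mu)\square_{(\alpha,\beta)}(V',\mu')$ decomposes as a sum over the two factors:
\begin{equation*}
\rho_{(\alpha,\beta),\mathbf{x}}(\mathbf{y})=d_{(\alpha,\beta)}(\mathbf{x},\mathbf{y})=d(x,y)+d'(x',y')=\rho_{x}(y)+\rho'_{x'}(y').
\end{equation*}
Setting $f:=\rho_{x}$ on $V$ and $f':=\rho'_{x'}$ on $V'$, this says precisely that $\rho_{(\alpha,\beta),\mathbf{x}}=\mathbf{f}$ in the notation of Lemma \ref{lem:product Laplacian}, so the product Laplacian acting on it is completely understood.

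First I would establish the formula for $\mathcal{H}_{(\alpha,\beta),\mathbf{x}}$. Applying Lemma \ref{lem:product Laplacian} to $\mathbf{f}=\rho_{(\alpha,\beta),\mathbf{x}}$ and evaluating at the basepoint $\mathbf{x}$ gives
\begin{equation*}
\mathcal{H}_{(\alpha,\beta),\mathbf{x}}=\mathcal{L}_{(\alpha,\beta)}\rho_{(\alpha,\beta),\mathbf{x}}(\mathbf{x})=\frac{\beta}{\alpha+\beta}\mathcal{L}\rho_{x}(x)+\frac{\alpha}{\alpha+\beta}\mathcal{L}'\rho'_{x'}(x')=\frac{\beta}{\alpha+\beta}\mathcal{H}_{x}+\frac{\alpha}{\alpha+\beta}\mathcal{H}'_{x'},
\end{equation*}
which is the first claim. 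The reverse curvature is handled identically: again by (\ref{eq:product distance}) one has $\olarrow{\rho}_{(\alpha,\beta),\mathbf{x}}(\mathbf{y})=d_{(\alpha,\beta)}(\mathbf{y},\mathbf{x})=\olarrow{\rho}_{x}(y)+\olarrow{\rho}'_{x'}(y')$, so the reverse distance function also splits additively; feeding this into Lemma \ref{lem:product Laplacian} and evaluating at $\mathbf{x}$ yields the second formula.

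Finally, the mixed asymptotic mean curvature is defined purely in terms of the two preceding quantities, so the third formula follows by linearity without any further geometric input. Recalling $\mathcal{H}_{(\alpha,\beta)}(\mathbf{x},\mathbf{y})=-(\mathcal{H}_{(\alpha,\beta),\mathbf{x}}+\olarrow{\mathcal{H}}_{(\alpha,\beta),\mathbf{y}})$ and substituting the first two identities, the coefficients $\beta/(\alpha+\beta)$ and $\alpha/(\alpha+\beta)$ regroup into the combinations $\mathcal{H}(x,y)=-(\mathcal{H}_{x}+\olarrow{\mathcal{H}}_{y})$ and $\mathcal{H}'(x',y')=-(\mathcal{H}'_{x'}+\olarrow{\mathcal{H}}'_{y'})$. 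There is no genuine obstacle here, as the proposition is essentially bookkeeping built on Lemma \ref{lem:product Laplacian}; the only point requiring attention is verifying that the \emph{reverse} distance $\olarrow{\rho}_{(\alpha,\beta),\mathbf{x}}$ splits as a sum, which rests on the fact that the $l^{1}$ structure of $d_{(\alpha,\beta)}$ in (\ref{eq:product distance}) is valid for both orderings of the arguments.
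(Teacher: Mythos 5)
Your proposal is correct and follows exactly the paper's route: the paper's proof consists of the single remark that the formulas ``directly follow from (\ref{eq:product distance}) and Lemma \ref{lem:product Laplacian},'' and your argument is precisely the unpacking of that remark, namely that the $l^{1}$-splitting (\ref{eq:product distance}) makes both $\rho_{(\alpha,\beta),\mathbf{x}}$ and $\olarrow{\rho}_{(\alpha,\beta),\mathbf{x}}$ functions of the form treated in Lemma \ref{lem:product Laplacian}, after which the mixed curvature formula is pure bookkeeping. Nothing is missing; you have simply written out the details the authors left implicit.
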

\begin{proof}
These formulas directly follow from (\ref{eq:product distance}) and Lemma \ref{lem:product Laplacian}.
\end{proof}

\subsection{Ricci curvature of weighted Cartesian products}
We calculate the Ricci curvature of the $(\alpha,\beta)$-weighted Cartesian product $(V, \mu) \square_{(\alpha,\beta)} (V', \mu')$ introduced in the above subsection.
For $x',y' \in V'$ with $x'\neq y'$,
let $\kappa'(x',y')$ denote the Ricci curvature over $(V',\mu')$ defined as Definition \ref{defi:Ricci curvature}.
For $\mathbf{x},\mathbf{y} \in V\times V'$ with $\mathbf{x}\neq \mathbf{y}$,
we also denote by $\kappa_{(\alpha,\beta)}(\mathbf{x},\mathbf{y})$ the Ricci curvature over $(V, \mu) \square_{(\alpha,\beta)} (V', \mu')$.
We obtain the following (cf. Theorem 3.1 in \cite{LLY}):
\begin{thm}\label{thm:product Ricci}
Let $\mathbf{x}=(x,x'), \mathbf{y}=(y,y') \in V\times V'$ with $\mathbf{x}\neq \mathbf{y}$.
Then we have the following:
\begin{enumerate}
\item If $x\neq y$ and $x'\neq y'$,
then
\begin{equation}\label{eq:product Ricci}
\kappa_{(\alpha,\beta)}(\mathbf{x},\mathbf{y}) = \frac{\beta}{\alpha+\beta} \frac{d(x,y)}{d(x,y) + d'(x',y')} \kappa(x,y)+\frac{\alpha}{\alpha+\beta}\frac{d'(x',y')}{d(x,y) + d'(x',y')} \kappa'(x',y');
\end{equation}
\item if $x\neq y$ and $x'= y'$,
then
\begin{equation}\label{eq:pre product Ricci1}
\kappa_{(\alpha,\beta)}(\mathbf{x},\mathbf{y}) = \frac{\beta}{\alpha+\beta}\kappa(x,y);
\end{equation}
\item if $x= y$ and $x'\neq y'$,
then
\begin{equation}\label{eq:pre product Ricci2}
\kappa_{(\alpha,\beta)}(\mathbf{x},\mathbf{y}) = \frac{\alpha}{\alpha+\beta} \kappa'(x',y').
\end{equation}
\end{enumerate}
\end{thm}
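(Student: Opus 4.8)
The plan is to deduce all three identities from the limit-free characterization in Theorem \ref{thm:limit free formula}, applied both to the factors and to the product: for the product graph this reads $\kappa_{(\alpha,\beta)}(\mathbf{x},\mathbf{y})=\inf_{\mathbf{f}\in\mathcal{F}_{\mathbf{x}\mathbf{y}}}\nabla_{\mathbf{x}\mathbf{y}}\mathcal{L}_{(\alpha,\beta)}\mathbf{f}$. The preparatory observation is that the kernel formula (\ref{eq:product mean prob}) yields, for an \emph{arbitrary} function $\mathbf{f}$ on $V\times V'$, the splitting
\begin{equation*}
\mathcal{L}_{(\alpha,\beta)}\mathbf{f}(x,x')=\frac{\beta}{\alpha+\beta}\Big(\mathbf{f}(x,x')-\sum_{w\in V}\mathcal{P}(x,w)\mathbf{f}(w,x')\Big)+\frac{\alpha}{\alpha+\beta}\Big(\mathbf{f}(x,x')-\sum_{w'\in V'}\mathcal{P}'(x',w')\mathbf{f}(x,w')\Big),
\end{equation*}
i.e. $\mathcal{L}_{(\alpha,\beta)}$ is the convex combination of the two partial Chung Laplacians acting in each factor with the other coordinate frozen; Lemma \ref{lem:product Laplacian} is its special case $\mathbf{f}(x,x')=f(x)+f'(x')$. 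In each case I establish the asserted identity by proving the matching upper and lower bounds separately.

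For the upper bounds I exhibit explicit competitors. In case (1), since $V$ and $V'$ are finite the infima defining $\kappa(x,y)$ and $\kappa'(x',y')$ are attained by some $f\in\mathcal{F}_{xy}$ and $f'\in\mathcal{F}_{x'y'}$; setting $\mathbf{f}(z,z'):=f(z)+f'(z')$ produces a $1$-Lipschitz function, because $d_{(\alpha,\beta)}$ is the $l^{1}$-distance (\ref{eq:product distance}), with $\nabla_{\mathbf{x}\mathbf{y}}\mathbf{f}=1$, so $\mathbf{f}\in\mathcal{F}_{\mathbf{x}\mathbf{y}}$, and Lemma \ref{lem:product Laplacian} computes $\nabla_{\mathbf{x}\mathbf{y}}\mathcal{L}_{(\alpha,\beta)}\mathbf{f}$ as precisely the right-hand side of (\ref{eq:product Ricci}). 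In cases (2) and (3) I instead take $\mathbf{f}(z,z'):=f(z)$ with $f\in\mathcal{F}_{xy}$; since $\mathcal{P}'$ is a probability kernel the second partial Laplacian annihilates such $\mathbf{f}$, so the splitting leaves only $\frac{\beta}{\alpha+\beta}\nabla_{xy}\mathcal{L}f$, and taking the infimum gives (\ref{eq:pre product Ricci1}); case (3) is symmetric.

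The substance is the lower bounds, and this is the step I expect to be the main obstacle, since a competitor $\mathbf{f}\in\mathcal{F}_{\mathbf{x}\mathbf{y}}$ need not split as a sum. The device is Lipschitz rigidity. Writing $a=d(x,y)$ and $b=d'(x',y')$, the hypothesis $\nabla_{\mathbf{x}\mathbf{y}}\mathbf{f}=1$ combined with $1$-Lipschitzness forces equality along both lattice routes $\mathbf{x}\to(y,x')\to\mathbf{y}$ and $\mathbf{x}\to(x,y')\to\mathbf{y}$; hence the four one-variable restrictions $g_{0}=\mathbf{f}(\cdot,x')$, $g_{1}=\mathbf{f}(\cdot,y')$, $h_{0}=\mathbf{f}(x,\cdot)$, $h_{1}=\mathbf{f}(y,\cdot)$ each lie in $\mathcal{F}_{xy}$, resp. $\mathcal{F}_{x'y'}$. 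By the splitting, $(a+b)\,\nabla_{\mathbf{x}\mathbf{y}}\mathcal{L}_{(\alpha,\beta)}\mathbf{f}$ equals $\frac{\beta}{\alpha+\beta}(\mathcal{L}g_{1}(y)-\mathcal{L}g_{0}(x))+\frac{\alpha}{\alpha+\beta}(\mathcal{L}'h_{1}(y')-\mathcal{L}'h_{0}(x'))$. The two values in each factor involve \emph{different} restrictions, so I split $\mathcal{L}g_{1}(y)-\mathcal{L}g_{0}(x)=(\mathcal{L}g_{1}(y)-\mathcal{L}g_{1}(x))+\mathcal{L}(g_{1}-g_{0})(x)$.

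The first summand is $\geq a\,\kappa(x,y)$ by Theorem \ref{thm:limit free formula}, since $g_{1}\in\mathcal{F}_{xy}$. The cross term is nonnegative because $(g_{1}-g_{0})(x)=\mathbf{f}(x,y')-\mathbf{f}(x,x')=b$ is the largest value permitted by $1$-Lipschitzness, so $(g_{1}-g_{0})(w)\leq b$ for every $w$ and subtracting the average $\sum_{w}\mathcal{P}(x,w)(g_{1}-g_{0})(w)\leq b$ leaves $\mathcal{L}(g_{1}-g_{0})(x)\geq0$. The identical argument in the second factor gives $\mathcal{L}'h_{1}(y')-\mathcal{L}'h_{0}(x')\geq b\,\kappa'(x',y')$, and combining the two yields $\nabla_{\mathbf{x}\mathbf{y}}\mathcal{L}_{(\alpha,\beta)}\mathbf{f}\geq\frac{1}{a+b}\big(\frac{\beta}{\alpha+\beta}a\,\kappa(x,y)+\frac{\alpha}{\alpha+\beta}b\,\kappa'(x',y')\big)$, the required lower bound for case (1). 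Cases (2) and (3) are the degenerate instances $b=0$ (resp. $a=0$): the vanishing factor then contributes only the nonnegative cross term $\mathcal{L}'(h_{1}-h_{0})(x')\geq0$ (resp. $\mathcal{L}(g_{1}-g_{0})$), rather than a curvature term, which is exactly what is needed to match (\ref{eq:pre product Ricci1}) and (\ref{eq:pre product Ricci2}).
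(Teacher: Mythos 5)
Your proposal is correct, and it proves the hard direction by a genuinely different route than the paper. The upper bounds agree in substance: the paper likewise feeds sum competitors $\mathbf{f}(z,z')=f(z)+f'(z')$ into Theorem \ref{thm:limit free formula} via Lemma \ref{lem:product Laplacian} (it takes infima over arbitrary $f\in\mathcal{F}_{xy}$, $f'\in\mathcal{F}_{x'y'}$ rather than invoking attainment of the infima, but by compactness your attainment claim is fine and in any case inessential). For the lower bound (\ref{eq:left product Ricci}), however, the paper works on the transport side, following Claim 1 of Lin-Lu-Yau: it decomposes $\nu_{(\alpha,\beta),\mathbf{x}}^{\varepsilon}$ by Lemma \ref{lem:product prob meas}, applies the joint convexity of $W$ (Proposition \ref{prop:jointly convex}), and builds explicit product couplings out of optimal couplings on the two factors to obtain
\begin{equation*}
W(\nu_{(\alpha,\beta),\mathbf{x}}^{\varepsilon},\nu_{(\alpha,\beta),\mathbf{y}}^{\varepsilon})\leq \frac{\beta}{\alpha+\beta}\left(W(\nu_{x}^{\varepsilon},\nu_{y}^{\varepsilon})+d'(x',y')\right)+\frac{\alpha}{\alpha+\beta}\left(d(x,y)+W((\nu_{x'}^{\varepsilon})',(\nu_{y'}^{\varepsilon})')\right),
\end{equation*}
then divides by $\epsilon$ and passes to the limit. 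You instead stay entirely inside the limit-free framework: an arbitrary competitor $\mathbf{f}\in\mathcal{F}_{\mathbf{x}\mathbf{y}}$, the partial-Laplacian splitting of $\mathcal{L}_{(\alpha,\beta)}$, the $l^{1}$-rigidity forcing the four one-variable restrictions into $\mathcal{F}_{xy}$ and $\mathcal{F}_{x'y'}$, and nonnegativity of the cross terms $\mathcal{L}(g_{1}-g_{0})(x)\geq 0$, which correctly uses that $\mathcal{P}(x,\cdot)$ sums to one (valid here by the Perron property (\ref{eq:Perron Frobenius})); the degenerate cases (2), (3) then come out uniformly, with the vanishing factor contributing only a cross term. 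Both arguments are sound; the trade-off is that the paper's coupling proof mirrors the classical undirected argument and does not need Theorem \ref{thm:limit free formula} for this inequality, whereas yours makes the entire theorem a consequence of the duality characterization alone, dispenses with Lemma \ref{lem:product prob meas} and all coupling constructions, and treats both inequalities and all three cases by one mechanism --- arguably a cleaner package, given that the paper already needs Theorem \ref{thm:limit free formula} for the converse direction anyway.
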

\begin{proof}
We only prove the formula (\ref{eq:product Ricci}).
The others (\ref{eq:pre product Ricci1}) and (\ref{eq:pre product Ricci2}) can be proved by the same argument as in the proof of (\ref{eq:product Ricci}),
and more easily.
We assume
$x\neq y$ and $x'\neq y'$.

We first show that
\begin{equation}\label{eq:left product Ricci}
\kappa_{(\alpha,\beta)}(\mathbf{x},\mathbf{y}) \geq \frac{\beta}{\alpha+\beta} \frac{d(x,y)}{d(x,y) + d'(x',y')} \kappa(x,y)+\frac{\alpha}{\alpha+\beta} \frac{d'(x',y')}{d(x,y) + d'(x',y')} \kappa'(x',y').
\end{equation}
In order to obtain the lower bound of $\kappa_{(\alpha,\beta)}(\mathbf{x},\mathbf{y})$,
we estimate the Wasserstein distance from above.
Proposition \ref{prop:jointly convex} and Lemma \ref{lem:product prob meas} yield that
for every $\epsilon \in (0,1]$,
\begin{align}\label{eq:product Ricci1}
W(\nu_{(\alpha,\beta),\mathbf{x}}^{\varepsilon}, \nu_{(\alpha,\beta),\mathbf{y}}^{\varepsilon})
& = W \left(\frac{\beta}{\alpha+\beta}\nu^{\epsilon}_{x|x'}+\frac{\alpha}{\alpha+\beta}(\nu^{\epsilon}_{x'|x})', \frac{\beta}{\alpha+\beta}\nu^{\epsilon}_{y|y'}+\frac{\alpha}{\alpha+\beta}(\nu^{\epsilon}_{y'|y} )'\right) \\ \notag
& \leq \frac{\beta}{\alpha+\beta}W(\nu^{\epsilon}_{x|x'},\nu^{\epsilon}_{y|y'}) +\frac{\alpha}{\alpha+\beta}W((\nu^{\epsilon}_{x'|x})',(\nu^{\epsilon}_{y'|y})'),
\end{align}
where the probability measures $\nu^{\epsilon}_{x|x'},\nu^{\epsilon}_{y|y'},(\nu^{\epsilon}_{x'|x})',(\nu^{\epsilon}_{y'|y})'$ are defined as (\ref{eq:product sub prob}).
Let us estimate $W(\nu^{\epsilon}_{x|x'},\nu^{\epsilon}_{y|y'})$ from above.
To do so,
we fix an optimal coupling $\pi_{xy}:V\times V\to [0,1]$ of $(\nu_x^{\varepsilon}, \nu_y^{\varepsilon})$,
and define a probability measure $\pi:(V\times V')\times (V\times V')\to [0,1]$ by
\begin{equation*}
\pi(\mathbf{z},\mathbf{w}):=\pi_{xy}(z,w)\delta_{x'}(z')\delta_{y'}(w').
\end{equation*}
We can check that
$\pi$ is a coupling of $(\nu^{\epsilon}_{x|x'},\nu^{\epsilon}_{y|y'})$.
From (\ref{eq:product distance}) we deduce
\begin{align}\label{eq:product Ricci2}
W(\nu^{\epsilon}_{x|x'},\nu^{\epsilon}_{y|y'})&\leq \sum_{\mathbf{z},\mathbf{w}\in V\times V'} d_{(\alpha,\beta)}(\mathbf{z},\mathbf{w})\pi(\mathbf{z},\mathbf{w}) \\ \notag
                                                                                                    & = \sum_{z,w\in V}\sum_{z',w'\in V'} (d(z,w) + d'(z',w'))\pi_{xy}(z,w)\delta_{x'}(z')\delta_{y'}(w') \\ \notag
                                                                                                    & = \sum_{z,w\in V} (d(z,w) + d'(x',y'))\pi_{xy}(z,w)= W(\nu_x^{\varepsilon}, \nu_y^{\varepsilon}) + d'(x', y'). 
\end{align}
We can also prove
\begin{equation}\label{eq:product Ricci3}
W((\nu^{\epsilon}_{x'|x})',(\nu^{\epsilon}_{y'|y})')\leq d(x,y)+W((\nu_{x'}^{\varepsilon})', (\nu_{y'}^{\varepsilon})')
\end{equation}
by considering a coupling $\pi':(V\times V')\times (V\times V')\to [0,1]$ of $((\nu^{\epsilon}_{x'|x})',(\nu^{\epsilon}_{y'|y})')$ defined as
\begin{equation*}
\pi'(\mathbf{z},\mathbf{w}):=\pi'_{x'y'}(z',w')\delta_{x}(z)\delta_{y}(w)
\end{equation*}
for a fixed optimal coupling $\pi'_{x'y'}:V'\times V'\to [0,1]$ of $((\nu_{x'}^{\varepsilon})',(\nu_{y'}^{\varepsilon})')$,
and by applying the same calculation as in (\ref{eq:product Ricci2}).
Substituting (\ref{eq:product Ricci2}) and (\ref{eq:product Ricci3}) into (\ref{eq:product Ricci1}),
we arrive at
\begin{align}\label{eq:pre product curvature}
W(\nu_{(\alpha,\beta),\mathbf{x}}^{\varepsilon}, \nu_{(\alpha,\beta),\mathbf{y}}^{\varepsilon})
&\leq \frac{\beta}{\alpha+\beta} \left( W(\nu_x^{\varepsilon}, \nu_y^{\varepsilon}) + d'(x',y') \right)\\ \notag
&\quad +\frac{\alpha}{\alpha+\beta}\left( d(x,y) + W((\nu_{x'}^{\varepsilon})', (\nu_{y'}^{\varepsilon})') \right).
\end{align}
Since $x\neq y$ and $x'\neq y'$,
we can rewrite (\ref{eq:pre product curvature}) as
\begin{align*}
1-\frac{W(\nu_{(\alpha,\beta),\mathbf{x}}^{\varepsilon}, \nu_{(\alpha,\beta)\mathbf{y}}^{\varepsilon})}{d_{(\alpha,\beta)}(\mathbf{x},\mathbf{y})}
& \geq \frac{\beta}{\alpha+\beta} \frac{d(x,y)}{d_{(\alpha,\beta)}(\mathbf{x},\mathbf{y})}  \left(  1-\frac{W(\nu_{x}^{\varepsilon}, \nu_{y}^{\varepsilon})}{d(x,y)}    \right)\\ \notag
&\quad +\frac{\alpha}{\alpha+\beta} \frac{d'(x',y')}{d_{(\alpha,\beta)}(\mathbf{x},\mathbf{y})}   \left(  1-\frac{W((\nu_{x'}^{\varepsilon})', (\nu_{y'}^{\varepsilon})')}{d'(x',y')}    \right).
\end{align*}
Dividing its both sides by $\epsilon$,
we obtain the relation for $\kappa_{\epsilon}$ defined (\ref{eq:pre Ricci curvature}).
Moreover,
letting $\epsilon \to 0$,
we conclude (\ref{eq:left product Ricci}) from (\ref{eq:product distance}).

We next prove the opposite inequality of (\ref{eq:left product Ricci}).
Take functions $f:V\to \mathbb{R}$ and $f':V'\to \mathbb{R}$ with $\nabla_{xy}f=1$ and $\nabla'_{x'y'}f'=1$,
where $\nabla_{xy}$ and $\nabla'_{x'y'}$ are the gradient operators over $(V,\mu)$ and $(V',\mu')$ defined as (\ref{eq:gradient operator}),
respectively.
We define a function $\mathbf{f}:V\times V'\to \mathbb{R}$ by
\begin{equation*}
\mathbf{f}(\mathbf{x}):=f(x)+f'(x').
\end{equation*}
Then we deduce $\nabla_{(\alpha,\beta),\mathbf{x}\mathbf{y}}\mathbf{f}=1$ from (\ref{eq:product distance}),
where $\nabla_{(\alpha,\beta),\mathbf{x}\mathbf{y}}$ denotes the gradient operator over $(V, \mu) \square_{(\alpha,\beta)} (V', \mu')$.
Hence,
by Theorem \ref{thm:limit free formula} and Lemma \ref{lem:product Laplacian},
we have
\begin{align*}
\kappa_{(\alpha,\beta)}(\mathbf{x},\mathbf{y})
&\leq \nabla_{(\alpha,\beta),\mathbf{x}\mathbf{y}}\mathcal{L}_{(\alpha,\beta)}\mathbf{f}
=\frac{\mathcal{L}_{(\alpha,\beta)}\mathbf{f}(\mathbf{y})-\mathcal{L}_{(\alpha,\beta)}\mathbf{f}(\mathbf{x})}{d_{(\alpha,\beta)}(\mathbf{x},\mathbf{y})}\\
&=\frac{1}{d_{(\alpha,\beta)}(\mathbf{x},\mathbf{y})}\left( \frac{\beta}{\alpha+\beta} (\mathcal{L}f(y)-\mathcal{L}f(x))+\frac{\alpha}{\alpha+\beta} (\mathcal{L}'f'(y')-\mathcal{L}'f'(x'))   \right)\\
&=\frac{\beta}{\alpha+\beta} \frac{d(x,y)}{d_{(\alpha,\beta)}(\mathbf{x},\mathbf{y})} \nabla_{xy}\mathcal{L}f+\frac{\alpha}{\alpha+\beta} \frac{d'(x',y')}{d_{(\alpha,\beta)}(\mathbf{x},\mathbf{y})}\nabla'_{x'y'} \mathcal{L}'f'.
\end{align*}
Since the functions $f$ and $f'$ are arbitrary,
we conclude the opposite one by using Theorem \ref{thm:limit free formula} again.
Thus we complete the proof.
\end{proof}

\begin{rem}
Lin-Lu-Yau \cite{LLY} have proved (\ref{eq:pre product Ricci1}) and (\ref{eq:pre product Ricci2}) when
$(V,\mu)$ is undirected $r$-regular,
$(V',\mu')$ is undirected $r'$-regular,
$\alpha=1/r,\beta=1/r'$ and $\mathbf{x}\rightarrow \mathbf{y}$ (see Theorem 3.1 in \cite{LLY}).
Our method of the proof of (\ref{eq:left product Ricci}) is based on that of Theorem 3.1 in \cite{LLY} (see Claim 1 in \cite{LLY}).
On the other hand,
for the proof of the opposite one,
their method seems not to work in our setting due to the lack of symmetry of the distance function (see Claim 2 in \cite{LLY}).
Our method via Theorem \ref{thm:limit free formula} seems to be new and more clear.
\end{rem}

The formulas (\ref{eq:product Ricci}), (\ref{eq:pre product Ricci1}), (\ref{eq:pre product Ricci2}) also hold for simple, connected, locally finite, infinite weighted undirected graphs,
whose proofs are identical.

\section{Estimates}\label{sec:Estimates of Ricci curvature}
In the present subsection,
we discuss several upper and lower bounds of our Ricci curvature.

\subsection{Lower bounds}
For $x,y\in V$ with $x\neq y$,
we set
\begin{equation*}
\mathcal{D}(x,y):=\max\left\{d(x,y),d(y,x)\right\}.
\end{equation*}
We first study a lower bound of our Ricci curvature (cf. Theorems 2 and 5 in \cite{JL}).
\begin{prop}\label{prop:basic lower Ricci bound}
For $x,y\in V$ with $x\neq y$,
we have
\begin{align}\label{eq:pre basic lower Ricci bound}
\kappa(x,y)\geq &-\frac{2 \mathcal{D}(x,y)}{d(x,y)}\,\left(1-\mathcal{P}(x,y)-\mathcal{P}(y,x) \right)_{+}+\frac{1}{d(x,y)}\left( d(x,y)+\mathcal{D}(x,y)-\mathcal{H}(y,x)  \right)\\ \notag
                        &-\frac{\mathcal{D}(x,y)-d(y,x)}{d(x,y)}\,\left(\mathcal{P}(x,y)+\mathcal{P}(y,x)\right),
\end{align}
where $(\cdot)_{+}$ denotes its positive part.
Moreover,
if $(x,y)\in E$,
then we have
\begin{equation}\label{eq:basic lower Ricci bound}
\kappa(x,y)\geq -2d(y,x)\,\left( 1-\mathcal{P}(x,y)-\mathcal{P}(y,x)  \right)_{+}+\left(  1+d(y,x)-\mathcal{H}(y,x)  \right).
\end{equation}
\end{prop}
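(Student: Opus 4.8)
The plan is to bound $\kappa(x,y)$ from below by exhibiting an explicit coupling of the two probability measures $\nu^{\epsilon}_{x}$ and $\nu^{\epsilon}_{y}$, using it to bound $W(\nu^{\epsilon}_{x},\nu^{\epsilon}_{y})$ from above, and then dividing by $\epsilon$ and letting $\epsilon\to 0$. Since the limit defining $\kappa(x,y)$ exists by Lemmas \ref{lem:preconcavity} and \ref{lem:preboundedness}, it suffices to produce one admissible transport plan whose asymptotic cost yields the right-hand side of (\ref{eq:pre basic lower Ricci bound}). Throughout I would use that $\mathcal{P}(x,\cdot)$ is a probability measure, so $\sum_{z}\mathcal{P}(x,z)=1$, together with the identities $\sum_{z}\mathcal{P}(x,z)\,d(z,x)=-\olarrow{\mathcal{H}}_{x}$ and $\sum_{z}\mathcal{P}(y,z)\,d(y,z)=-\mathcal{H}_{y}$ read off from (\ref{eq:mean curvature formula}) and (\ref{eq:reverse mean curvature formula}); these are what make the neighbourhood contributions assemble into the mixed curvature $\mathcal{H}(y,x)=-(\mathcal{H}_{y}+\olarrow{\mathcal{H}}_{x})$.

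Concretely, recalling that $\nu^{\epsilon}_{x}$ places mass $1-\epsilon$ at $x$ and mass $\epsilon\,\mathcal{P}(x,z)$ on each $z\in\mathcal{N}_{x}$, and symmetrically for $\nu^{\epsilon}_{y}$, I would transport the bulk of the mass from $x$ to $y$ at cost $d(x,y)$ per unit, while letting the two ``exchange'' masses remain in place at zero cost: the mass $\epsilon\,\mathcal{P}(x,y)$ that $\nu^{\epsilon}_{x}$ already carries on $y$ stays at $y$, and the demand $\epsilon\,\mathcal{P}(y,x)$ of $\nu^{\epsilon}_{y}$ at $x$ is met from the mass sitting at $x$. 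The remaining neighbourhood clouds are routed through $x$ and $y$ and estimated by the triangle inequality $d(z,y)\le d(z,x)+d(x,y)$ for source neighbours $z$ and $d(x,w)\le d(x,y)+d(y,w)$ for target neighbours $w$; summing these estimates against $\mathcal{P}(x,\cdot)$ and $\mathcal{P}(y,\cdot)$ is exactly what generates the curvature term $\mathcal{H}(y,x)$ together with the distance factors $d(x,y)$ and $\mathcal{D}(x,y)$ after normalising by $d(x,y)$. The portion of mass that cannot be matched directly between $x$ and $y$ is controlled by $1-\mathcal{P}(x,y)-\mathcal{P}(y,x)$; this residue must traverse a ``backward'' leg whose length I would bound uniformly by $\mathcal{D}(x,y)=\max\{d(x,y),d(y,x)\}$, which produces the factor $-2\mathcal{D}(x,y)$ and, via the elementary inequality $t\le t_{+}$, forces the positive part $(\,\cdot\,)_{+}$.

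The main obstacle is the bookkeeping of this coupling under a non-symmetric distance: since $d(z,w)\neq d(w,z)$ in general, one must fix the routing direction of every residual parcel, and the correction $-\big(\mathcal{D}(x,y)-d(y,x)\big)(\mathcal{P}(x,y)+\mathcal{P}(y,x))$ precisely records the gap between the generic upper bound $\mathcal{D}(x,y)$ and the true return distance $d(y,x)$ along the relevant edges. I expect the estimate to split according to the sign of $1-\mathcal{P}(x,y)-\mathcal{P}(y,x)$, the positive part serving as the device that packages both cases into a single inequality; checking that the proposed $\pi$ is a genuine coupling, with correct marginals and nonnegative entries for all small $\epsilon$, is routine but must be carried out. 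Finally, (\ref{eq:basic lower Ricci bound}) follows at once from (\ref{eq:pre basic lower Ricci bound}) by specialising to an edge $(x,y)\in E$: then $d(x,y)=1$ and $d(y,x)\ge 1$, so $\mathcal{D}(x,y)=d(y,x)$, the last summand vanishes, and the general bound collapses to the stated edge inequality.
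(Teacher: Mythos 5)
Your route is genuinely different from the paper's, and it can be made to work, but not in the way you describe. The paper never builds a coupling: by Lemma \ref{lem:preconcavity} it suffices to bound $\kappa_{1}(x,y)$ (the value at $\epsilon=1$) from below, and this is done via the Kantorovich duality of Proposition \ref{prop:Kantorovich duality}, splitting the dual objective into the three pieces $\sum_{z\neq x}(f(z)-f(y))\mathcal{P}(y,z)$, $-\sum_{z\neq y}(f(z)-f(x))\mathcal{P}(x,z)$ and $(f(y)-f(x))\left(1-\mathcal{P}(x,y)-\mathcal{P}(y,x)\right)$, estimated by $d(y,z)$, $d(z,x)$ and $\mathcal{D}(x,y)$ respectively; the positive part enters only through the identity $\vert t\vert =2t_{+}-t$. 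In particular, the terms $-2\mathcal{D}(x,y)\left(1-\mathcal{P}(x,y)-\mathcal{P}(y,x)\right)_{+}$ and $-(\mathcal{D}(x,y)-d(y,x))(\mathcal{P}(x,y)+\mathcal{P}(y,x))$ are artifacts of this dual estimate. They do \emph{not} arise from your coupling, and this is the genuine inaccuracy in your plan: in the transport you set up there is no ``residual parcel traversing a backward leg of length at most $\mathcal{D}(x,y)$,'' so your computation cannot terminate in the stated right-hand side.

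What your coupling actually yields is stronger. Writing $d=d(x,y)$, $d'=d(y,x)$, $s=\mathcal{P}(x,y)+\mathcal{P}(y,x)$, your plan (bulk $1-2\epsilon$ from $x$ to $y$; the masses $\epsilon\mathcal{P}(x,y)$ at $y$ and $\epsilon\mathcal{P}(y,x)$ at $x$ left in place; each $\epsilon\mathcal{P}(x,z)$ sent from $z$ to $y$ with $d(z,y)\leq d(z,x)+d$; each demand $\epsilon\mathcal{P}(y,w)$ filled from $x$ with $d(x,w)\leq d+d(y,w)$) has, by (\ref{eq:mean curvature formula}) and (\ref{eq:reverse mean curvature formula}), total cost at most
\begin{equation*}
d+\epsilon\left(\mathcal{H}(y,x)-(d+d')s\right),
\end{equation*}
whence, dividing by $\epsilon$ and letting $\epsilon\to 0$,
\begin{equation*}
\kappa(x,y)\geq \frac{(d+d')s-\mathcal{H}(y,x)}{d}.
\end{equation*}
To deduce the proposition you must then add an algebraic comparison that your proposal omits: the difference between this bound and the right-hand side of (\ref{eq:pre basic lower Ricci bound}), multiplied by $d$, equals $(1-s)\left(\mathcal{D}(x,y)-d\right)\geq 0$ when $s\leq 1$, and $(s-1)\left(d+\mathcal{D}(x,y)\right)\geq 0$ when $s>1$; so your bound does imply (\ref{eq:pre basic lower Ricci bound}), and this is where the case split on the sign of $1-s$ that you anticipated is really used (not inside the coupling estimate). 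With that step supplied your argument is complete and in fact proves more than the stated inequality. Your final paragraph deducing (\ref{eq:basic lower Ricci bound}) from (\ref{eq:pre basic lower Ricci bound}) --- $d(x,y)=1$ forces $\mathcal{D}(x,y)=d(y,x)$, so the last term vanishes --- is correct and is exactly the paper's.
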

\begin{proof}
Jost-Liu \cite{JL} shown (\ref{eq:basic lower Ricci bound}) in the undirected case,
whose primitive version has been established by Lin-Yau \cite{LY} (see Theorem 5 in \cite{JL}, and also Proposition 1.5 in \cite{LY}, Theorem 2 in \cite{JL}).
We will calculate along the line of the proof of Theorem 2 in \cite{JL}.

In view of Lemma \ref{lem:preconcavity},
it suffices to prove that
$\kappa_{1}(x,y)$ is bounded from below by the right hand side of (\ref{eq:basic lower Ricci bound}).
To do so,
let us estimate $W(\nu^{1}_{x},\nu^{1}_{y})$ from above.
Note that
$\nu^{1}_{x}(z)=\mathcal{P}(x,z)$ and $\nu^{1}_{y}(z)=\mathcal{P}(y,z)$ for all $z\in V$.
From Proposition \ref{prop:Kantorovich duality}
we deduce
\begin{align*}
W(\nu^{1}_{x},\nu^{1}_{y})&= \sup_{f\in \Lip_1(V)} \sum_{z\in V} f(z)(\nu_{y}^{1}(z)-\nu_{x}^{1}(z))\\
                                         &= \sup_{f\in \Lip_1(V)}\Biggl\{ \Biggl( \sum_{z\in V\setminus \{x\}}   \left(  f(z)-f(y)  \right)\mathcal{P}(y,z)     \Biggl)-\Biggl( \sum_{z\in V\setminus \{y\}}   \left(  f(z)-f(x)  \right)\mathcal{P}(x,z)     \Biggl)\\
                                         &\qquad \qquad \qquad\qquad \qquad \qquad \qquad \qquad +\left(f(y)-f(x)\right)\left(1-\mathcal{P}(x,y)-\mathcal{P}(y,x)\right) \Biggl\}.
\end{align*}
For any $f\in \Lip_1(V)$
we have
\begin{equation*}
f(z)-f(y)\leq d(y,z),\quad f(z)-f(x)\geq -d(z,x),\quad \vert f(y)-f(x) \vert\leq \mathcal{D}(x,y),
\end{equation*}
and hence
\begin{align*}
W(\nu^{1}_{x},\nu^{1}_{y})&\leq \sum_{z\in V\setminus \{x\}}   d(y,z)\mathcal{P}(y,z)+\sum_{z\in V\setminus \{y\}}   d(z,x)\mathcal{P}(x,z)\\
                                         &\qquad \qquad \qquad\qquad \qquad \,\,  +\mathcal{D}(x,y)\,\left\vert 1-\mathcal{P}(x,y)-\mathcal{P}(y,x)  \right\vert\\
                                         &=\left(-\mathcal{H}_{y}-d(y,x)\mathcal{P}(y,x)\right)+\left(  -\olarrow{\mathcal{H}}_{x} -d(y,x)\mathcal{P}(x,y) \right)\\
                                         &\quad +\mathcal{D}(x,y)\,  \left( 2 \left(1-\mathcal{P}(x,y)-\mathcal{P}(y,x) \right)_{+}  -\left(  1-\mathcal{P}(x,y)-\mathcal{P}(y,x)   \right) \right)\\
                                         &=\mathcal{H}(y,x)-d(y,x)\,\left(   \mathcal{P}(x,y)+\mathcal{P}(y,x) \right)\\
                                         &\quad +\mathcal{D}(x,y)\,  \left( 2 \left(1-\mathcal{P}(x,y)-\mathcal{P}(y,x) \right)_{+}  -\left(  1-\mathcal{P}(x,y)-\mathcal{P}(y,x)   \right) \right)\\
                                         &=2 \mathcal{D}(x,y)\,\left(1-\mathcal{P}(x,y)-\mathcal{P}(y,x) \right)_{+}-\left( \mathcal{D}(x,y)-\mathcal{H}(y,x)  \right)\\
                                         &\quad +\left(\mathcal{D}(x,y)-d(y,x)\right)\,\left(\mathcal{P}(x,y)+\mathcal{P}(y,x)\right),
\end{align*}
here we used (\ref{eq:mean curvature formula}), (\ref{eq:reverse mean curvature formula}).
This proves (\ref{eq:pre basic lower Ricci bound}).

When $(x,y)\in E$,
we have $d(x,y)=1$.
Furthermore,
\begin{equation*}
\mathcal{D}(x,y)=\max\left\{d(x,y),d(y,x)\right\}=\max\left\{1,d(y,x)\right\}=d(y,x)
\end{equation*}
since $d(y,x)\geq 1$.
Substituting these equalities into (\ref{eq:pre basic lower Ricci bound}),
we see that
the first term in the right hand of (\ref{eq:pre basic lower Ricci bound}) becomes that of (\ref{eq:basic lower Ricci bound}),
the second term also does,
and the third term vanishes.
Thus we obtain (\ref{eq:basic lower Ricci bound}).
\end{proof}

\begin{rem}\label{rem:unweighted basic lower Ricci bound}
In the undirected case,
under the same setting as in Proposition \ref{prop:basic lower Ricci bound},
we have $d(y,x)=1$ and $\mathcal{H}(x,y)=2$.
In particular,
the second term of the right hand side of (\ref{eq:basic lower Ricci bound}) vanishes,
and hence its right hand side coincides with that of Theorem 5 in \cite{JL}.
\end{rem}

For regular graphs,
we derive an another lower bound in terms of the inscribed radius instead of the asymptotic mean curvature (cf. Theorem 3 in \cite{JL}).
\begin{prop}\label{prop:lower bound of Ricci curvature}
For $r\geq 1$,
let $(V,\mu)$ be an $r$-regular graph.
Then for all edge $(x,y)\in E$,
\begin{equation*}
\kappa(x,y) \geq \frac{1-r}{2r} - \sum_{z \in N_{x} \setminus \olarrow{N}_{y}} \frac{\IR_{z}V}{2r},
\end{equation*}
where $N_{x}$ and $\olarrow{N}_{y}$ are defined as $(\ref{eq:neighborhoods})$,
and $\IR_{z} V$ is done as $(\ref{eq:inscribed radius})$.
\end{prop}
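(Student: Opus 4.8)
The plan is to argue exactly as in the proof of Proposition~\ref{prop:basic lower Ricci bound}: reduce to $\epsilon=1$ and bound the Wasserstein distance $W(\nu^{1}_{x},\nu^{1}_{y})$ from above by an explicit coupling. By Lemma~\ref{lem:preconcavity} the quotient $\kappa_{\epsilon}(x,y)/\epsilon$ is non-increasing in $\epsilon$, so $\kappa(x,y)\ge \kappa_{1}(x,y)$; since $(x,y)\in E$ we have $d(x,y)=1$, whence $\kappa_{1}(x,y)=1-W(\nu^{1}_{x},\nu^{1}_{y})$ with $\nu^{1}_{x}=\mathcal{P}(x,\cdot)$ and $\nu^{1}_{y}=\mathcal{P}(y,\cdot)$. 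Thus it suffices to prove $W(\nu^{1}_{x},\nu^{1}_{y})\le \frac{3r-1}{2r}+\sum_{z\in N_{x}\setminus\olarrow{N}_{y}}\frac{\IR_{z}V}{2r}$.

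First I would use $r$-regularity to make every transition probability explicit through~(\ref{eq:Euler mean trans}), so that each value of $\mathcal{P}$ equals $1/r$, $1/(2r)$ or $0$ according to the two-sided adjacency, and I would record the counts $|N_{x}|=|\olarrow{N}_{x}|=|N_{y}|=|\olarrow{N}_{y}|=r$. The structural device is the splitting $\mathcal{P}=\tfrac{1}{2}(P+\olarrow{P})$, which writes $\nu^{1}_{x}$ as the sum of an outgoing half carrying mass $\tfrac{1}{2r}$ on each vertex of $N_{x}$ and an incoming half carrying mass $\tfrac{1}{2r}$ on each vertex of $\olarrow{N}_{x}$ (and similarly for $\nu^{1}_{y}$). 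I would then build the coupling by treating these halves separately.

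The heart of the estimate is the outgoing half of $\nu^{1}_{x}$. For $z\in N_{x}$ with $z\to y$, that is $z\in N_{x}\cap\olarrow{N}_{y}$, the source vertex $z$ lies within distance $1$ of $y$, so its mass can be moved cheaply; together with the incoming halves (each such $z$ satisfies $d(z,y)\le d(z,x)+d(x,y)=2$) and the overlapping mass kept in place, this transport along the edge $x\to y$ yields the base contribution $\frac{3r-1}{2r}$. For $z\in N_{x}\setminus\olarrow{N}_{y}$ there is no guaranteed short route into the support of $\nu^{1}_{y}$, so I would bound the transport cost of its mass $\tfrac{1}{2r}$ crudely by $\IR_{z}V=\sup_{w}d(z,w)$; summing over such $z$ produces precisely $\sum_{z\in N_{x}\setminus\olarrow{N}_{y}}\frac{\IR_{z}V}{2r}$. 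Adding the two types of contributions and simplifying with the regularity counts gives the desired upper bound on $W$, and hence the proposition.

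The main obstacle I expect is arranging the coupling so that its two marginals are exactly $\nu^{1}_{x}$ and $\nu^{1}_{y}$ while keeping the cost accounting transparent. One must distribute the cheap transport so that it absorbs both incoming halves together with the cooperative outgoing mass consistently at $x$ and at $y$, and verify that the mass left over for the long-range (inscribed-radius) routing is exactly the outgoing half sitting on $N_{x}\setminus\olarrow{N}_{y}$. This is the directed counterpart of Theorem~3 in~\cite{JL}, and it is the bookkeeping of the matching, rather than the closing arithmetic, that will require care.
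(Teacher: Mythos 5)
Your overall strategy coincides with the paper's: reduce to a single parameter value (the paper runs the plan for general $\epsilon$ and lets $\epsilon\to 0$, you invoke Lemma \ref{lem:preconcavity} to reduce to $\epsilon=1$; both are fine), split each measure into its outgoing and incoming halves via $\mathcal{P}=\frac{1}{2}(P+\olarrow{P})$, keep the overlapping mass on $N_{x}\cap\olarrow{N}_{y}$ in place, and pay $\IR_{z}V$ for the outgoing mass on $N_{x}\setminus\olarrow{N}_{y}$. However, there is a genuine gap in precisely the bookkeeping you flag but do not resolve: the matching you describe does not produce the base cost $\frac{3r-1}{2r}$, and in fact cannot. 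Your parenthetical bound $d(z,y)\le 2$ for $z\in\olarrow{N}_{x}$ is unusable as a transport cost, because at $\epsilon=1$ the target measure carries no mass at $y$ (the graph is simple, so $\nu^{1}_{y}(y)=\mathcal{P}(y,y)=0$); the incoming half of $\nu^{1}_{x}$ must be delivered to the actual support $N_{y}\cup\olarrow{N}_{y}$, and the only generic estimate for a target $w\in N_{y}$ is $d(z,w)\le d(z,x)+d(x,y)+d(y,w)=3$. If all $r$ incoming units are routed this way, the base cost is $\frac{3r}{2r}$, and the argument only yields $\kappa(x,y)\ge -\frac{1}{2}-\sum_{z\in N_{x}\setminus\olarrow{N}_{y}}\frac{\IR_{z}V}{2r}$, which falls short of the proposition by $\frac{1}{2r}$.

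The missing idea is a pair of special distance-one moves, which is exactly what the paper's five-step transfer plan supplies. Since $(x,y)\in E$, the source's outgoing half has a unit of mass $\frac{1}{2r}$ at $y\in N_{x}$, and the target's incoming half has a unit at $x\in\olarrow{N}_{y}$; neither belongs to the overlap. Match the source unit at $y$ with the target's \emph{outgoing} unit at a fixed $y_{0}\in N_{y}$ (cost $d(y,y_{0})=1$), and match the source's \emph{incoming} unit at a fixed $x_{0}\in\olarrow{N}_{x}$ with the target unit at $x$ (cost $d(x_{0},x)=1$). Only the remaining $r-1$ incoming units then travel to $N_{y}\setminus\{y_{0}\}$ at cost at most $3$; the overlap stays put; and the remaining outgoing units on $N_{x}\setminus(\olarrow{N}_{y}\cup\{y\})$ go to $\olarrow{N}_{y}\setminus(N_{x}\cup\{x\})$ at cost at most $\IR_{z}V$, the counts matching because $|N_{x}|=|\olarrow{N}_{y}|=r$. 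This gives
\begin{equation*}
W(\nu^{1}_{x},\nu^{1}_{y})\le \frac{1}{2r}+\frac{1}{2r}+\frac{3(r-1)}{2r}+\sum_{z\in N_{x}\setminus(\olarrow{N}_{y}\cup\{y\})}\frac{\IR_{z}V}{2r}
\le \frac{3r-1}{2r}+\sum_{z\in N_{x}\setminus\olarrow{N}_{y}}\frac{\IR_{z}V}{2r},
\end{equation*}
which is the bound your reduction requires; without these two crossings, the stated constant $\frac{1-r}{2r}$ is out of reach for this style of coupling.
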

\begin{proof}
We take a coupling between $\nu^{\epsilon}_{x}$ and $\nu^{\epsilon}_{y}$.
Our transfer plan moving $\nu^{\epsilon}_{x}$ to $\nu^{\epsilon}_{y}$ should be as follows:
	\begin{enumerate}
	\item Move the mass of $1- \epsilon$ from $x$ to $y$. The distance is $1$;
	\item Move the mass of $\epsilon /2 r$ from $y$ to a fixed $y_{0} \in N_{y}$, and move the mass of $\epsilon / 2r$ from a fixed $x_{0} \in \olarrow{N}_{x}$ to $x$. These distances are $1$;
	\item Move the mass of $\epsilon / 2r$ to itself at $N_{x}  \cap \olarrow{N}_{y}$;
	\item Move the mass of $\epsilon /2 r$ from a vertex in $ \olarrow{N}_{x} \setminus \left\{x_{0} \right\}$ to a vertex in $N_{y} \setminus \left\{ y_{0} \right\}$. The distance is at most $3$;
	\item Move the mass of $\epsilon /2 r$ from a vertex $z$ in $N_{x} \setminus \olarrow{N}_{y}$ to a vertex in $\olarrow{N}_{y} \setminus N_{x}$. The distance is at most $\IR_z V$.
	\end{enumerate}
	By this transfer plan, calculating the Wasserstein distance from $\nu^{\epsilon}_{x}$ to $\nu^{\epsilon}_{y}$, we have
\begin{align*}
W(\nu^{\epsilon}_{x}, \nu^{\epsilon}_{y}) &\leq (1 - \epsilon) \times 1 + \frac{\epsilon}{2 r} \times 1 + \frac{\epsilon}{2 r} \times 1 + (r -1) \frac{\epsilon}{2r} \times 3 \\
&\quad + \sum_{z \in N_{x} \setminus \olarrow{N}_{y}} \frac{\epsilon}{2r} \times \IR_z V\\
&= 1 - \epsilon \left( \frac{1- r}{2r}  - \sum_{z \in N_{x} \setminus \olarrow{N}_{y}} \cfrac{\IR_{z}V}{2r} \right).
\end{align*}
This completes the proof.
\end{proof}

\subsection{Upper bounds}
We next examine an upper bound (cf. Theorems 4 and 7 in \cite{JL}).
\begin{prop}\label{prop:upper bound of Ricci curvature}
For every edge $(x,y)\in E$
we have
\begin{equation}\label{eq:upper bound of Ricci curvature}
\kappa (x,y) \leq \mathcal{P}(x,y)+ \mathcal{P}(y,x)+ \left(\sum_{z \in \mathcal{N}_{x} \cap \mathcal{N}_{y}}\mathcal{P}(x,z) \right) \wedge   \left( \sum_{z \in \mathcal{N}_{x} \cap \mathcal{N}_{y}}\mathcal{P}(y,z) \right),
\end{equation}
where $s \wedge t := \min \{s,t\}$.
\end{prop}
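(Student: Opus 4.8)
The plan is to use the limit-free characterization from Theorem~\ref{thm:limit free formula}, namely $\kappa(x,y)=\inf_{f\in\mathcal{F}_{xy}}\nabla_{xy}\mathcal{L}f$, and to obtain (\ref{eq:upper bound of Ricci curvature}) by evaluating $\nabla_{xy}\mathcal{L}f$ at two cleverly chosen test functions. Since $(x,y)\in E$ we have $d(x,y)=1$, so $\nabla_{xy}h=h(y)-h(x)$ for any $h$, and $\mathcal{F}_{xy}=\{f\in\Lip_1(V)\mid f(y)-f(x)=1\}$. The crucial simplification is that \emph{every} $\{0,1\}$-valued function on $V$ lies in $\Lip_1(V)$: for distinct vertices $v,w$ one has $d(v,w)\ge 1$, so $|h(w)-h(v)|\le 1\le d(v,w)$ automatically. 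Thus I am free to use indicator functions as test functions, and the directed/non-symmetric difficulty of verifying the Lipschitz condition disappears.

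First I would record the working identity. For any $f$ with $f(y)-f(x)=1$, expanding the Chung Laplacian (\ref{eq:Chung Laplacian}) gives
\[
\nabla_{xy}\mathcal{L}f=\mathcal{L}f(y)-\mathcal{L}f(x)=1-\sum_{z\in V}\bigl(\mathcal{P}(y,z)-\mathcal{P}(x,z)\bigr)f(z).
\]
I would also note two facts used repeatedly: $\mathcal{P}(x,z)>0$ iff $z\in\mathcal{N}_x$ (from property (2)--(3) after (\ref{eq:symmetric edge weight})), and $\sum_{z}\mathcal{P}(y,z)=1$, which follows from $\sum_z\mathfrak{m}(z)P(z,y)=\mathfrak{m}(y)$ in (\ref{eq:Perron Frobenius}). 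Since the graph is simple, $x\notin\mathcal{N}_x$ and $y\notin\mathcal{N}_y$, so $x,y\notin\mathcal{N}_x\cap\mathcal{N}_y$, while $x\in\mathcal{N}_y\setminus\mathcal{N}_x$ and $y\in\mathcal{N}_x\setminus\mathcal{N}_y$; keeping track of these memberships is the only bookkeeping that needs care.

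Next I would choose the two indicators. Put $A:=\sum_{z\in\mathcal{N}_x\cap\mathcal{N}_y}\mathcal{P}(x,z)$ and $B:=\sum_{z\in\mathcal{N}_x\cap\mathcal{N}_y}\mathcal{P}(y,z)$. For the $B$-bound I take $f=\mathbf 1_U$ with $U=\{y\}\cup\bigl(\mathcal{N}_y\setminus(\mathcal{N}_x\cup\{x\})\bigr)$; here the common neighbours are excluded from $U$, so only the $\mathcal{P}(y,\cdot)$-mass on the ``$y$-only'' neighbours survives, and using $\sum_z\mathcal{P}(y,z)=1$ together with the partition $\mathcal{N}_y=(\mathcal{N}_x\cap\mathcal{N}_y)\cup\{x\}\cup(\mathcal{N}_y\setminus(\mathcal{N}_x\cup\{x\}))$ I get
\[
\sum_{z\in U}\bigl(\mathcal{P}(y,z)-\mathcal{P}(x,z)\bigr)=-\mathcal{P}(x,y)+\bigl(1-B-\mathcal{P}(y,x)\bigr),
\]
hence $\nabla_{xy}\mathcal{L}f=\mathcal{P}(x,y)+\mathcal{P}(y,x)+B$. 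For the $A$-bound I instead take $g=\mathbf 1_{U'}$ with $U'=\{y\}\cup(\mathcal{N}_y\setminus\{x\})$, i.e.\ $g=f+\mathbf 1_{\mathcal{N}_x\cap\mathcal{N}_y}$; now the common neighbours sit inside the support and contribute $-\mathcal{P}(x,z)$ each, changing the previous value by $B-A$ and yielding $\nabla_{xy}\mathcal{L}g=\mathcal{P}(x,y)+\mathcal{P}(y,x)+A$. Both $f,g$ are $\{0,1\}$-valued with $f(y)-f(x)=g(y)-g(x)=1$, so both lie in $\mathcal{F}_{xy}$, and Theorem~\ref{thm:limit free formula} gives $\kappa(x,y)\le\min\{A,B\}+\mathcal{P}(x,y)+\mathcal{P}(y,x)$, which is (\ref{eq:upper bound of Ricci curvature}).

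I do not expect a serious obstacle: once the indicator trick removes the Lipschitz check, the argument reduces to the combinatorial bookkeeping of splitting $\mathcal{N}_y$ and invoking the normalization $\sum_z\mathcal{P}(y,z)=1$. The one place to be careful is the precise membership of $x$ and $y$ relative to $\mathcal{N}_x\cap\mathcal{N}_y$, since a sign or an off-by-one there would spoil the $\mathcal{P}(x,y)+\mathcal{P}(y,x)$ term. Alternatively, the same two test functions feed directly into the Kantorovich--Rubinstein duality of Proposition~\ref{prop:Kantorovich duality} to lower bound $W(\nu^{\epsilon}_x,\nu^{\epsilon}_y)$, which is the route of Jost--Liu; the limit-free formula just makes the $\epsilon\to0$ passage automatic.
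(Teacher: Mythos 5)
Your proof is correct, but it runs on the dual side of the problem, whereas the paper's own proof runs on the primal side. Writing $A:=\sum_{z\in\mathcal{N}_x\cap\mathcal{N}_y}\mathcal{P}(x,z)$ and $B:=\sum_{z\in\mathcal{N}_x\cap\mathcal{N}_y}\mathcal{P}(y,z)$ as you do, the paper modifies the coupling argument of Jost--Liu: for an \emph{arbitrary} coupling $\pi\in\Pi(\nu^{\epsilon}_x,\nu^{\epsilon}_y)$ it observes that all mass outside $\{(x,x),(y,y)\}\cup\left((\mathcal{N}_x\cap\mathcal{N}_y)\times(\mathcal{N}_x\cap\mathcal{N}_y)\right)$ must travel distance at least $1$, and then bounds the exceptional mass by the marginal constraints, namely $\pi(x,x)\le\nu^{\epsilon}_y(x)=\epsilon\,\mathcal{P}(y,x)$, $\pi(y,y)\le\nu^{\epsilon}_x(y)=\epsilon\,\mathcal{P}(x,y)$, and the common-neighbourhood block by $\epsilon A$ (summing rows against $\nu^{\epsilon}_x$) or $\epsilon B$ (summing columns against $\nu^{\epsilon}_y$); this gives $W(\nu^{\epsilon}_x,\nu^{\epsilon}_y)\ge 1-\epsilon\left(\mathcal{P}(x,y)+\mathcal{P}(y,x)+A\wedge B\right)$, and dividing by $\epsilon$ and letting $\epsilon\to 0$ finishes. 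You instead invoke the limit-free formula of Theorem \ref{thm:limit free formula} and exhibit two explicit elements of $\mathcal{F}_{xy}$; your computations check out, including the membership bookkeeping ($y\in\mathcal{N}_x\setminus\mathcal{N}_y$, $x\in\mathcal{N}_y\setminus\mathcal{N}_x$, $x,y\notin\mathcal{N}_x\cap\mathcal{N}_y$ by simpleness), the normalization $\sum_z\mathcal{P}(y,z)=1$ via the Perron property, and the key observation that every $\{0,1\}$-valued function is automatically $1$-Lipschitz even for the non-symmetric distance, which is exactly what makes your indicator test functions admissible. As for what each approach buys: the paper's route is self-contained (it needs only the definition of $\kappa$, Lemma \ref{lem:preconcavity}, and marginal constraints, not Theorem \ref{thm:limit free formula}), while yours eliminates the $\epsilon$-limit and the support analysis of couplings entirely, and the minimum $A\wedge B$ appears transparently as the better of two dual certificates rather than as two ways of bounding the same block of $\pi$. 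The two arguments are in fact dual to one another: your indicators are precisely Kantorovich potentials certifying the primal bounds the paper extracts from the marginal constraints.
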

\begin{proof}
We show the desired inequality by modifying the proof of Theorem 4 in \cite{JL}.
Take a coupling $\pi$ of $(\nu^{\epsilon}_{x},\nu^{\epsilon}_{y})$.
Note that
$\nu^{\epsilon}_{x}$ is supported on $\{x\}\cup \mathcal{N}_{x}$.
Hence
$\pi$ is supported on
\begin{equation*}
\mathcal{N}_{\pi}:=\left(\mathcal{N}_{xy}\times \mathcal{N}_{xy}\right) \setminus \left( \left(   \left(  \mathcal{N}_{y}\setminus (\{x\}\cup\mathcal{N}_{x})   \right) \times \mathcal{N}_{xy} \right)     \cup  \left(\mathcal{N}_{xy}\times  \left(\mathcal{N}_{x}\setminus (\{y\}\cup\mathcal{N}_{y})\right)  \right)   \right),
\end{equation*}
where we set $\mathcal{N}_{xy}:=\mathcal{N}_{x}\cup \mathcal{N}_{y}$.
Let us define a subset $\mathcal{N}_{\pi,0}$ of $\mathcal{N}_{\pi}$ as
\begin{equation*}
\mathcal{N}_{\pi,0}:=\mathcal{N}_{\pi} \setminus \left(\{(x,x),(y,y)\}\cup \left( (\mathcal{N}_{x}\cap \mathcal{N}_{y})\times (\mathcal{N}_{x}\cap \mathcal{N}_{y})\right) \right).
\end{equation*}
Notice that
$d(z,w)\geq 1$ for all $(z,w)\in \mathcal{N}_{\pi,0}$.
It follows that
\begin{align}\label{eq:upper1}
\sum_{z,w\in V}d(z,w)\pi(z,w)&\geq \sum_{(z,w)\in \mathcal{N}_{\pi,0}}\pi(z,w)\\ \notag
                                              &   =   1-\left(\pi(x,x)+\pi(y,y)+\sum_{(z,w)\in (\mathcal{N}_{x}\cap \mathcal{N}_{y})\times (\mathcal{N}_{x}\cap \mathcal{N}_{y})} \pi(z,w)   \right).
\end{align}

By $\pi \in \Pi(\nu^{\epsilon}_{x},\nu^{\epsilon}_{y})$, we have
\begin{equation}\label{eq:upper2}
\pi(x,x)\leq \sum_{z\in V} \pi(z,x)=\nu^{\epsilon}_{y}(x)=\epsilon \mathcal{P}(y,x),\quad \pi(y,y)\leq \sum_{w\in V} \pi(y,w)=\nu^{\epsilon}_{x}(y)=\epsilon \mathcal{P}(x,y).
\end{equation}
Further,
for every $z\in \mathcal{N}_{x}\cap \mathcal{N}_{y}$ we see
\begin{equation*}
\sum_{w\in \mathcal{N}_{x}\cap \mathcal{N}_{y}}\pi(z,w)\leq \sum_{w\in V}\pi(z,w) =\nu^{\epsilon}_{x}(z)=\epsilon \mathcal{P}(x,z),
\end{equation*}
and hence
\begin{equation}\label{eq:upper3}
\sum_{(z,w)\in (\mathcal{N}_{x}\cap \mathcal{N}_{y})\times (\mathcal{N}_{x}\cap \mathcal{N}_{y})} \pi(z,w)  \leq \epsilon \sum_{z\in \mathcal{N}_{x}\cap \mathcal{N}_{y}}\mathcal{P}(x,z).
\end{equation}
Similarly,
for every $w\in \mathcal{N}_{x}\cap \mathcal{N}_{y}$,
\begin{equation*}
\sum_{z\in \mathcal{N}_{x}\cap \mathcal{N}_{y}}\pi(z,w)\leq \sum_{z\in V}\pi(z,w) =\nu^{\epsilon}_{y}(w)=\epsilon \mathcal{P}(y,w),
\end{equation*}
and thus
\begin{equation}\label{eq:upper4}
\sum_{(z,w)\in (\mathcal{N}_{x}\cap \mathcal{N}_{y})\times (\mathcal{N}_{x}\cap \mathcal{N}_{y})} \pi(z,w)  \leq \epsilon \sum_{w\in \mathcal{N}_{x}\cap \mathcal{N}_{y}}\mathcal{P}(y,w).
\end{equation}
We now combine (\ref{eq:upper1}), (\ref{eq:upper2}), (\ref{eq:upper3}), (\ref{eq:upper4}).
Since $\pi$ is arbitrary,
we conclude
\begin{equation*}
W(\nu^{\epsilon}_{x},\nu^{\epsilon}_{y}) \geq 1-\epsilon \,\left( \mathcal{P}(x,y)+\mathcal{P}(y,x)+  \left(\sum_{z \in \mathcal{N}_{x} \cap \mathcal{N}_{y}}\mathcal{P}(x,z) \right) \wedge   \left( \sum_{z \in \mathcal{N}_{x} \cap \mathcal{N}_{y}}\mathcal{P}(y,z) \right)  \right).
\end{equation*}
By $d(x,y)=1$ and by the definition of $\kappa(x,y)$,
we arrive at the desired one.
\end{proof}

\begin{rem}\label{rem:simpler estimate}
We observe that
the right hand side of (\ref{eq:upper bound of Ricci curvature}) is at most
\begin{equation*}
\mathcal{P}(x,y)+ \mathcal{P}(y,x)+ \sum_{z \in \mathcal{N}_{x} \cap \mathcal{N}_{y}} \mathcal{P}(x,z).
\end{equation*}
that is smaller than or equal to $1+\mathcal{P}(y,x)$.
Therefore
we can conclude a simpler estimate $\kappa (x,y)\leq 1+\mathcal{P}(y,x)$.
\end{rem}

\section{Curvature-dimension conditions}\label{sec:Curvature-dimension conditions}
The aim of this section is to study the relation between our Ricci curvature and the curvature-dimension inequalities of Bakry-\'Emery type.

\subsection{Curvature-dimension inequalities}
Let us recall the notion of \textit{$\Gamma$-operator} (or \textit{carr\'e du champ}), 
and the \textit{$\Gamma_{2}$-operator} (or \textit{carr\'e du champ it\'er\'e}) of Bakry-\'Emery \cite{BE} to formulate the curvature-dimension inequality. 
The \textit{$\Gamma$-operator},
and the \textit{$\Gamma_{2}$-operator} for the (negative) Chung Laplacian $\Delta$ are defined as follows (see \cite{BE}, and also Chapter 14 in \cite{V2}):
\begin{align*}
\Gamma(f_{0},f_{1})&:=\frac{1}{2}\left(\Delta(f_{0}\, f_{1})-f_{0}\, \Delta f_{1}-f_{1}\, \Delta f_{0} \right),\\
\Gamma_{2}(f_{0},f_{1})&:=\frac{1}{2}\left(\Delta \Gamma(f_{0},f_{1})-\Gamma(f_{0},\Delta f_{1})-\Gamma(f_{1},\Delta f_{0}) \right)
\end{align*}
for functions $f_{0},f_{1}:V\to \mathbb{R}$.

For a function $f:V\to \mathbb{R}$,
we define a function $\mathcal{G}f:V\to \mathbb{R}$ by
\begin{equation*}\label{eq:additional term}
\mathcal{G}f(x):=\frac{1}{4}\sum_{y,z\in V} \left(f(x)-2f(y)+f(z)    \right)^{2}\mathcal{P}(x,y)\mathcal{P}(y,z).
\end{equation*}
We begin with the following formulas:
\begin{prop}\label{prop:Gamma formula}
For all $f:V\to \mathbb{R}$ we have
\begin{align}\label{eq:gradient}
\Gamma(f,f)(x)
&=\frac{1}{2}\sum_{y\in V} (f(x)-f(y))^{2} \mathcal{P}(x,y),\\ \notag
\Delta \Gamma(f,f)(x)
&=\frac{1}{2}\sum_{y,z\in V} \left(f(x)-2f(y)+f(z)    \right)^{2}\mathcal{P}(x,y)\mathcal{P}(y,z)\\ \notag
&\quad -\left(\sum_{y\in V}\left(f(x)-f(y)   \right) \mathcal{P}(x,y)\right)\left(\sum_{z\in V}\left( f(x)-2f(y)+f(z)    \right)\mathcal{P}(y,z)\right),\\ \notag
2\Gamma(f,\Delta f)(x)
&=-\left(\Delta f(x)\right)^{2}-\left(\sum_{y\in V}(f(x)-f(y))\mathcal{P}(x,y)\right)\left(\sum_{z\in V}(f(z)-f(y))\mathcal{P}(y,z)\right).
\end{align}
In particular,
\begin{equation}\label{eq:curvature dimension1}
\Gamma_{2}(f,f)=\mathcal{G}f-\Gamma(f,f)+\frac{1}{2}(\Delta f)^{2}.
\end{equation}
\end{prop}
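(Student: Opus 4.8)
The plan is to reduce both second-order formulas to a single pointwise kernel representation of $\Gamma$ and then apply $\Delta$ twice, using throughout only that $\mathcal{P}$ is a Markov kernel, i.e. $\sum_{y\in V}\mathcal{P}(x,y)=1$ for every $x$; neither the symmetry of $\mathfrak{m}_{xy}$ nor the self-adjointness of $\mathcal{L}$ will be needed. First I would establish the bilinear identity
\[
\Gamma(f_{0},f_{1})(x)=\frac{1}{2}\sum_{y\in V}\mathcal{P}(x,y)\bigl(f_{0}(y)-f_{0}(x)\bigr)\bigl(f_{1}(y)-f_{1}(x)\bigr),
\]
which follows by inserting $\Delta g(x)=\sum_{y}\mathcal{P}(x,y)(g(y)-g(x))$ into the definition of $\Gamma$ and expanding the product $f_{0}(y)f_{1}(y)$; specializing to $f_{0}=f_{1}=f$ gives (\ref{eq:gradient}).

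For $\Delta\Gamma(f,f)$ I would write $\Delta\Gamma(f,f)(x)=\sum_{y}\mathcal{P}(x,y)\Gamma(f,f)(y)-\Gamma(f,f)(x)$, substitute the kernel formula for $\Gamma(f,f)$ at both $y$ and $x$, and use $\sum_{z}\mathcal{P}(y,z)=1$ to write the subtracted term as a double sum over $(y,z)$. This yields
\[
\Delta\Gamma(f,f)(x)=\frac{1}{2}\sum_{y,z\in V}\mathcal{P}(x,y)\mathcal{P}(y,z)\Bigl[(f(y)-f(z))^{2}-(f(x)-f(y))^{2}\Bigr].
\]
Writing $a:=f(x)-f(y)$ and $c:=f(x)-2f(y)+f(z)$, so that $f(y)-f(z)=a-c$, the bracket equals $(a-c)^{2}-a^{2}=c^{2}-2ac$, which is exactly $(f(x)-2f(y)+f(z))^{2}-2(f(x)-f(y))(f(x)-2f(y)+f(z))$; this produces the stated formula for $\Delta\Gamma(f,f)$.

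For $2\Gamma(f,\Delta f)$ I would put $f_{0}=f$ and $f_{1}=\Delta f$ in the bilinear identity and split $\Delta f(y)-\Delta f(x)$. The contribution of $\Delta f(x)$ factors out of the $y$-sum as $-\Delta f(x)\sum_{y}\mathcal{P}(x,y)(f(y)-f(x))=-(\Delta f(x))^{2}$, while inserting $\Delta f(y)=\sum_{z}\mathcal{P}(y,z)(f(z)-f(y))$ into the remaining term gives the double sum in the third formula.

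Finally I would assemble $\Gamma_{2}(f,f)=\tfrac{1}{2}\Delta\Gamma(f,f)-\Gamma(f,\Delta f)$. The squared-second-difference term contributes precisely $\mathcal{G}f$, and the $-(\Delta f)^{2}$ in $2\Gamma(f,\Delta f)$ contributes $\tfrac{1}{2}(\Delta f)^{2}$. The one point to verify carefully — and the only delicate bookkeeping in the argument — is that the two leftover double sums combine to $-\Gamma(f,f)$: their sum is
\[
\frac{1}{2}\sum_{y,z\in V}(f(x)-f(y))\mathcal{P}(x,y)\mathcal{P}(y,z)\Bigl[(f(z)-f(y))-(f(x)-2f(y)+f(z))\Bigr],
\]
and since the bracket collapses to $-(f(x)-f(y))$, this equals $-\tfrac{1}{2}\sum_{y,z}(f(x)-f(y))^{2}\mathcal{P}(x,y)\mathcal{P}(y,z)=-\Gamma(f,f)(x)$ after summing out $z$ with $\sum_{z}\mathcal{P}(y,z)=1$. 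Collecting the three contributions gives (\ref{eq:curvature dimension1}).
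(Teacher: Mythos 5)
Your proof is correct and is in substance the same argument the paper intends: the paper's proof merely notes $\mathcal{P}(x,y)=\mathfrak{m}_{xy}/\mathfrak{m}(x)$ and defers the computation to Lin--Yau, and your kernel expansions (the bilinear formula for $\Gamma$, the double-sum rewritings of $\Delta\Gamma(f,f)$ and $2\Gamma(f,\Delta f)$, and the cancellation of the two cross terms into $-\Gamma(f,f)$ when assembling $\Gamma_{2}$) are exactly the calculation the paper leaves to the reader. The only cosmetic difference is that you organize everything around row-stochasticity $\sum_{y\in V}\mathcal{P}(x,y)=1$ (which in this setting is precisely the Perron measure property) rather than the symmetric-weight representation $\mathfrak{m}_{xy}$, which makes your write-up self-contained and shows the pointwise identities hold for any Markov kernel.
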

\begin{proof}
We recall that
the Perron measure $\mathfrak{m}$ and the value $\mathfrak{m}_{xy}$ are defined as (\ref{eq:Perron Frobenius}) and (\ref{eq:symmetric edge weight}),
respectively.
Keeping in mind $\mathcal{P}(x,y)=\mathfrak{m}_{xy}/\mathfrak{m}(x)$,
we can show the desired formulas from the same calculation as that done by Lin-Yau \cite{LY} (see Lemmas 1.4, 2.1 and (2.2) in \cite{LY}, and also Subsection 2.2 in \cite{JL}).
The calculation is left to the readers.
\end{proof}

We define the \textit{triangle function} $\mathcal{T}:V\to \mathbb{R}$ as follows (cf. Subsection 3.1 in \cite{JL}):
\begin{equation*}
\mathcal{T}(x):=\inf_{y\in \mathcal{N}_{x}}\vert \mathcal{N}_{x}\cap\mathcal{N}_{y} \vert,
\end{equation*}
where $\vert \cdot \vert$ denotes its cardinality.

Based on Proposition \ref{prop:Gamma formula},
we formulate the following curvature-dimension inequality:
\begin{thm}\label{thm:cd ineq}
For all $f:V\to \mathbb{R}$,
we have
\begin{equation*}
\Gamma_{2}(f,f)\geq \cfrac{1}{2}(\Delta f)^{2} + \mathcal{K}\,\Gamma(f,f),
\end{equation*}
where a function $\mathcal{K}:V\to \mathbb{R}$ is defined as
\begin{equation*}
\mathcal{K}(x):=\left(\inf_{y\in \mathcal{N}_{x}}\,\mathcal{P}(y,x)\right)\left\{2+\frac{\mathcal{T}(x)}{2}\left(\inf_{y\in \mathcal{N}_{x}} \inf_{z\in \mathcal{N}_{x}\cap \mathcal{N}_{y}}\, \frac{\mathcal{P}(y,z)}{\mathcal{P}(y,x)}\right)   \right\}-1.
\end{equation*}
\end{thm}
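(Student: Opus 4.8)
The plan is to convert the $\Gamma_{2}$-inequality into a single pointwise inequality for the auxiliary quantity $\mathcal{G}f$ and then to estimate $\mathcal{G}f$ from below by retaining only a carefully chosen nonnegative subfamily of its summands. Indeed, the identity (\ref{eq:curvature dimension1}) of Proposition \ref{prop:Gamma formula} gives
\[
\Gamma_{2}(f,f)=\mathcal{G}f-\Gamma(f,f)+\frac{1}{2}(\Delta f)^{2},
\]
so, after cancelling the common $\tfrac12(\Delta f)^2$, the asserted bound is exactly equivalent to the pointwise estimate
\[
\mathcal{G}f(x)\geq(\mathcal{K}(x)+1)\,\Gamma(f,f)(x)\qquad\text{for every }x\in V.
\]
Writing $P_{x}:=\inf_{y\in\mathcal{N}_{x}}\mathcal{P}(y,x)$ and $Q_{x}:=\inf_{y\in\mathcal{N}_{x}}\inf_{z\in\mathcal{N}_{x}\cap\mathcal{N}_{y}}\tfrac{\mathcal{P}(y,z)}{\mathcal{P}(y,x)}$, we have $\mathcal{K}(x)+1=P_{x}\bigl(2+\tfrac{\mathcal{T}(x)}{2}Q_{x}\bigr)$, which splits into a ``$2P_{x}$''-part and a ``triangle''-part; the strategy is to produce these two contributions from two disjoint families of terms of $\mathcal{G}f(x)$. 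This follows the blueprint of Lin--Yau and Jost--Liu (cf.\ Subsection 3.1 in \cite{JL}).

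Recall $\mathcal{G}f(x)=\tfrac14\sum_{y,z}(f(x)-2f(y)+f(z))^{2}\mathcal{P}(x,y)\mathcal{P}(y,z)$, a sum of nonnegative terms whose summand vanishes unless $y\in\mathcal{N}_{x}$ and $z\in\mathcal{N}_{y}$. First I would keep only the \emph{return terms} $z=x$. There $(f(x)-2f(y)+f(x))^{2}=4(f(x)-f(y))^{2}$, so these terms sum to $\sum_{y\in\mathcal{N}_{x}}(f(x)-f(y))^{2}\mathcal{P}(x,y)\mathcal{P}(y,x)$. Bounding $\mathcal{P}(y,x)\geq P_{x}$ and recognizing $\sum_{y}(f(x)-f(y))^{2}\mathcal{P}(x,y)=2\Gamma(f,f)(x)$ by formula (\ref{eq:gradient}) yields precisely $2P_{x}\,\Gamma(f,f)(x)$, the first part of $\mathcal{K}(x)+1$.

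For the triangle part I would retain the terms with $z\in\mathcal{N}_{x}\cap\mathcal{N}_{y}$ and $z\neq x$. Setting $a_{y}:=f(x)-f(y)$, the relevant second difference is $f(x)-2f(y)+f(z)=2a_{y}-a_{z}$. Using the definition of $Q_{x}$ one lower-bounds $\mathcal{P}(y,z)\geq Q_{x}\mathcal{P}(y,x)\geq P_{x}Q_{x}$, and then symmetrizes over the two orderings of each triangle $\{y,z\}$ by means of the elementary inequality $(2a-b)^{2}+(2b-a)^{2}\geq a^{2}+b^{2}$. Since each edge $x\sim y$ lies in at least $\mathcal{T}(x)=\inf_{y\in\mathcal{N}_x}|\mathcal{N}_{x}\cap\mathcal{N}_{y}|$ triangles through $x$, collecting the resulting $a_{y}^{2}$-contributions and again invoking $\sum_{y}a_{y}^{2}\mathcal{P}(x,y)=2\Gamma(f,f)(x)$ should produce $\tfrac{\mathcal{T}(x)}{2}P_{x}Q_{x}\,\Gamma(f,f)(x)$. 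Adding the two parts gives $(\mathcal{K}(x)+1)\,\Gamma(f,f)(x)$, completing the reduction.

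The hard part will be the triangle (clustering) estimate, and specifically the symmetrization step in the weighted, directed setting. In the undirected unweighted case the two orderings $(y,z)$ and $(z,y)$ of a triangle carry identical weights $\mathcal{P}(x,y)\mathcal{P}(y,z)=\mathcal{P}(x,z)\mathcal{P}(z,y)$, so $(2a-b)^{2}+(2b-a)^{2}\geq a^{2}+b^{2}$ applies at once; here these weights genuinely differ, and a naive term-by-term replacement by a uniform lower bound is too lossy because the cross terms $-4a_{y}a_{z}$ need not be dominated. The infima $P_{x}$ and $Q_{x}$, together with the factor $\tfrac{\mathcal{T}(x)}{2}$, are introduced precisely to absorb this asymmetry: one must order the estimates so that the transition probabilities $\mathcal{P}(y,z)$ are replaced by $P_{x}Q_{x}\mathcal{P}(y,x)$ while the factor $\mathcal{P}(x,y)$ needed to rebuild $\Gamma(f,f)$ is preserved, and verify that the symmetrized second-difference inequality survives this bookkeeping. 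This matching of weights is the single point requiring genuine care; the remaining manipulations are the routine algebra already indicated in Proposition \ref{prop:Gamma formula}.
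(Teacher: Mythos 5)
Your overall architecture is the same as the paper's: reduce via (\ref{eq:curvature dimension1}) to the pointwise bound $\mathcal{G}f(x)\geq(\mathcal{K}(x)+1)\Gamma(f,f)(x)$, extract $2P_{x}\Gamma(f,f)(x)$ from the return terms $z=x$ using (\ref{eq:gradient}), and get the remaining $\tfrac{\mathcal{T}(x)}{2}P_{x}Q_{x}\Gamma(f,f)(x)$ from the triangle terms. The return-term part of your argument is correct. The gap is exactly at the point you flag, and the prescription you offer there does not close it. If you first bound $\mathcal{P}(y,z)\geq Q_{x}\mathcal{P}(y,x)$ (or $\geq P_{x}Q_{x}$), the two orderings of a triangle carry residual weights proportional to $\mathcal{P}(x,y)$ and $\mathcal{P}(x,z)$ (or to $\mathcal{P}(x,y)\mathcal{P}(y,x)$ and $\mathcal{P}(x,z)\mathcal{P}(z,x)$), which are unrelated numbers, and the weighted form of your elementary inequality is simply false: writing $a_{y}=f(x)-f(y)$, take $a_{y}=1$, $a_{z}=2$, so that $(2a_{y}-a_{z})^{2}=0$; then
\begin{equation*}
(2a_{y}-a_{z})^{2}\mathcal{P}(x,y)+(2a_{z}-a_{y})^{2}\mathcal{P}(x,z)=9\,\mathcal{P}(x,z)<a_{y}^{2}\,\mathcal{P}(x,y)+a_{z}^{2}\,\mathcal{P}(x,z)
\end{equation*}
whenever $\mathcal{P}(x,z)\ll\mathcal{P}(x,y)$. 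So no constant extracted by taking infima first can repair the asymmetry; ``verify that the symmetrized second-difference inequality survives this bookkeeping'' is a verification that would fail.

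The missing idea is that $\mathcal{P}$ is reversible with respect to the Perron measure, and that this must be used \emph{before} any infimum is taken, to make the pair weights literally equal. By (\ref{eq:symmetric edge weight}) one has $\mathfrak{m}(x)\mathcal{P}(x,y)=\mathfrak{m}_{xy}=\mathfrak{m}_{yx}$, hence the key identity
\begin{equation*}
\mathcal{P}(x,y)\mathcal{P}(y,z)=\frac{\mathfrak{m}_{xy}}{\mathfrak{m}(x)}\cdot\frac{\mathfrak{m}_{yz}}{\mathfrak{m}(y)}=\mathcal{P}(y,x)\,\frac{\mathfrak{m}_{yz}}{\mathfrak{m}(x)}\geq P_{x}\,\frac{\mathfrak{m}_{yz}}{\mathfrak{m}(x)},
\end{equation*}
in which the surviving weight $\mathfrak{m}_{yz}$ \emph{is} symmetric under $y\leftrightarrow z$. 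Now your elementary inequality applies verbatim, as in (\ref{eq:triangle}): $(2a_{y}-a_{z})^{2}+(2a_{z}-a_{y})^{2}=a_{y}^{2}+4(a_{y}-a_{z})^{2}+a_{z}^{2}\geq a_{y}^{2}+a_{z}^{2}$, all multiplied by the common factor $\mathfrak{m}_{yz}$. Only \emph{after} this symmetrization does $Q_{x}$ enter, in the index-matched form $\mathfrak{m}_{yz}\geq Q_{x}\mathfrak{m}_{yx}=Q_{x}\mathfrak{m}_{xy}$ for the $a_{y}^{2}$ term and $\mathfrak{m}_{yz}=\mathfrak{m}_{zy}\geq Q_{x}\mathfrak{m}_{zx}=Q_{x}\mathfrak{m}_{xz}$ for the $a_{z}^{2}$ term, so that the factors $\mathfrak{m}_{xy}/\mathfrak{m}(x)=\mathcal{P}(x,y)$ needed to rebuild $\Gamma(f,f)(x)$ reappear with the correct indices; counting each $y$ in $\vert\mathcal{N}_{x}\cap\mathcal{N}_{y}\vert\geq\mathcal{T}(x)$ unordered pairs and using the prefactor $\tfrac14$ then gives exactly $\tfrac{\mathcal{T}(x)}{2}P_{x}Q_{x}\Gamma(f,f)(x)$. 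In short: your outline is structurally the paper's proof, but the decisive mechanism --- trading $\mathcal{P}(x,y)\mathcal{P}(y,z)$ for the symmetric weight $\mathfrak{m}_{yz}$ via reversibility before symmetrizing --- is absent, and without it the step you isolate is not merely delicate but false as proposed.
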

\begin{proof}
Jost-Liu \cite{JL} have proved a similar curvature-dimension inequality in the undirected case (cf. Theorems 9 and 10 in \cite{JL}).
We show the desired inequality along the line of the proof of Theorem 9 in \cite{JL}.
In view of (\ref{eq:curvature dimension1}),
it suffices to show that
$\mathcal{G}f/\Gamma(f,f)$ is bounded from below by $\mathcal{K}+1$.
From (\ref{eq:gradient}) we deduce
\begin{align}\label{eq:cd ineq1}
\mathcal{G}f(x)&=\frac{1}{4}\sum_{y,z\in V} \left(f(x)-2f(y)+f(z)    \right)^{2}\mathcal{P}(x,y)\mathcal{P}(y,z)\\ \notag
&= \sum_{y\in V}(f(x)-f(y))^{2} \mathcal{P}(x,y)\mathcal{P}(y,x)\\ \notag
&\quad + \cfrac{1}{4}\sum_{y\in V}\sum_{z \in V\setminus \{x\}}  \left(f(x)-2f(y)+f(z)    \right)^{2}\mathcal{P}(x,y)\mathcal{P}(y,z)\\ \notag
&\geq 2\mathcal{K}_{0}(x) \Gamma(f,f)(x)+\cfrac{1}{4}\sum_{y\in \mathcal{N}_{x}}\sum_{z \in \mathcal{N}_{y}\setminus \{x\}}\left(f(x)-2f(y)+f(z)    \right)^{2}\mathcal{P}(x,y)\mathcal{P}(y,z)\\ \notag
&\geq 2\mathcal{K}_{0}(x) \Gamma(f,f)(x)+\cfrac{1}{4}\sum_{y\in \mathcal{N}_{x}}\sum_{z \in \mathcal{N}_{x} \cap\mathcal{N}_{y}}\left(f(x)-2f(y)+f(z)    \right)^{2}\mathcal{P}(x,y)\mathcal{P}(y,z),
\end{align}
where a function $\mathcal{K}_{0}:V\to \mathbb{R}$ is defined as
\begin{equation*}
\mathcal{K}_{0}(x):=\inf_{y\in \mathcal{N}_{x}}\,\mathcal{P}(y,x).
\end{equation*}
We estimate the second term of the right hand side of (\ref{eq:cd ineq1}).
Define $\mathcal{G}_{0}f:V\to \mathbb{R}$ by
\begin{equation*}
\mathcal{G}_{0}f(x):=\sum_{y\in \mathcal{N}_{x}}\sum_{z \in \mathcal{N}_{x} \cap\mathcal{N}_{y}}\left(f(x)-2f(y)+f(z)    \right)^{2}\mathcal{P}(x,y)\mathcal{P}(y,z).
\end{equation*}
By using $\mathcal{P}(x,y)=\mathfrak{m}_{xy}/\mathfrak{m}(x)$,
we rewrite $\mathcal{G}_{0}f$ as
\begin{align*}
\mathcal{G}_{0}f(x)&=\frac{1}{\mathfrak{m}(x)}\sum_{y\in \mathcal{N}_{x}}  \sum_{z \in \mathcal{N}_{x} \cap\mathcal{N}_{y}}  \frac{\mathfrak{m}_{xy}}{\mathfrak{m}(y)}  \left(f(x)-2f(y)+f(z)    \right)^{2}\mathfrak{m}_{yz}\\ \notag
                               &=\frac{1}{\mathfrak{m}(x)}\sum_{y\in \mathcal{N}_{x}} \mathcal{P}(y,x) \sum_{z \in \mathcal{N}_{x} \cap\mathcal{N}_{y}}   \left(f(x)-2f(y)+f(z)    \right)^{2}\mathfrak{m}_{yz}.
\end{align*}
In particular,
\begin{equation}\label{eq:modified curvature dimension}
\mathcal{G}_{0}f(x)\geq \frac{\mathcal{K}_{0}(x)}{\mathfrak{m}(x)}\sum_{y\in \mathcal{N}_{x}}  \sum_{z \in \mathcal{N}_{x} \cap\mathcal{N}_{y}}   \left(f(x)-2f(y)+f(z)    \right)^{2}\mathfrak{m}_{yz}.
\end{equation}

We now observe that
for $y\in \mathcal{N}_{x}$ and $z\in \mathcal{N}_{x}\cap \mathcal{N}_{y}$,
\begin{align}\label{eq:triangle}
& \left(f(x)-2f(y)+f(z)    \right)^{2}\mathfrak{m}_{yz}+\left(f(x)-2f(z)+f(y)    \right)^{2}\mathfrak{m}_{zy}\\ \notag
& = \left(      \left(f(x)-2f(y)+f(z)    \right)^{2}+    \left(f(x)-2f(z)+f(y)    \right)^{2}  \right)\mathfrak{m}_{yz}\\ \notag
&=  \left(  \left( f(x)-f(y)    \right)^{2}+4\left( f(y)-f(z)    \right)^{2}+\left( f(x)-f(z)    \right)^{2}  \right)\mathfrak{m}_{yz}\\ \notag
&\geq  \left(  \left( f(x)-f(y)    \right)^{2}+\left( f(x)-f(z)    \right)^{2}  \right)\mathfrak{m}_{yz}\\ \notag
&\geq \mathcal{K}_{1}(x)\, \left(  (f(x)-f(y))^{2}\mathfrak{m}_{xy}+ (f(x)-f(z))^{2}\mathfrak{m}_{xz} \right),
\end{align}
where a function $\mathcal{K}_{1}:V\to \mathbb{R}$ is defined as
\begin{equation*}
\mathcal{K}_{1}(x):=\inf_{y\in \mathcal{N}_{x}} \inf_{z\in \mathcal{N}_{x}\cap \mathcal{N}_{y}}\, \frac{\mathfrak{m}_{yz}}{\mathfrak{m}_{yx}}=\inf_{y\in \mathcal{N}_{x}} \inf_{z\in \mathcal{N}_{x}\cap \mathcal{N}_{y}}\, \frac{\mathcal{P}(y,z)}{\mathcal{P}(y,x)}.
\end{equation*}
From (\ref{eq:triangle}),
and the \textit{triangle argument} which is the main idea of the proof of Theorem 9 in \cite{JL},
it follows that
the right hand side of (\ref{eq:modified curvature dimension}) is greater than or equal to
\begin{equation*}
\frac{\mathcal{K}_{0}(x) \,\mathcal{K}_{1}(x)}{\mathfrak{m}(x)}\sum_{y\in \mathcal{N}_{x}} \vert \mathcal{N}_{x}\cap \mathcal{N}_{y} \vert \,  \left(f(x)-f(y)    \right)^{2}\mathfrak{m}_{xy}.
\end{equation*}
Hence,
(\ref{eq:gradient}) leads us to
\begin{equation*}
\mathcal{G}_{0}f(x)\geq \mathcal{T}(x) \mathcal{K}_{0}(x) \,\mathcal{K}_{1}(x) \sum_{y\in \mathcal{N}_{x}} \left(f(x)-f(y)    \right)^{2}\mathcal{P}(x,y)=2\mathcal{T}(x)\mathcal{K}_{0}(x) \,\mathcal{K}_{1}(x)\Gamma(f,f)(x).
\end{equation*}
Therefore,
\begin{align*}
\mathcal{G}f(x)&\geq 2\mathcal{K}_{0}(x) \Gamma(f,f)(x)+\frac{1}{4}\mathcal{G}_{0}f(x)\\
                        &\geq \mathcal{K}_{0}(x)\left(2+\frac{1}{2}\mathcal{T}(x)\mathcal{K}_{1}(x)\right)\Gamma(f,f)(x)=\left(\mathcal{K}(x)+1\right)\Gamma(f,f)(x).
\end{align*}
This completes the proof.
\end{proof}

We can immediately derive the following simple one from Theorem \ref{thm:cd ineq}:
\begin{cor}\label{cor:simple curvature dimension}
For all $f:V\to \mathbb{R}$,
we have
\begin{equation*}
\Gamma_{2}(f,f)\geq \cfrac{1}{2}(\Delta f)^{2} + \widetilde{\mathcal{K}}\,\Gamma(f,f)
\end{equation*}
on $V$,
where a function $\widetilde{\mathcal{K}}:V\to \mathbb{R}$ is defined as
\begin{equation*}\label{eq:lower bound of curvature dimension}
\widetilde{\mathcal{K}}(x):=2\,\inf_{y\in \mathcal{N}_{x}}\mathcal{P}(y,x)-1.
\end{equation*}
\end{cor}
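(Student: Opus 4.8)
The plan is to observe that Corollary \ref{cor:simple curvature dimension} follows from Theorem \ref{thm:cd ineq} by a direct pointwise comparison of the two curvature functions $\mathcal{K}$ and $\widetilde{\mathcal{K}}$. First I would set $\mathcal{K}_{0}(x):=\inf_{y\in\mathcal{N}_{x}}\mathcal{P}(y,x)$ and $\mathcal{K}_{1}(x):=\inf_{y\in\mathcal{N}_{x}}\inf_{z\in\mathcal{N}_{x}\cap\mathcal{N}_{y}}\mathcal{P}(y,z)/\mathcal{P}(y,x)$, exactly as in the proof of Theorem \ref{thm:cd ineq}, and rewrite the curvature function there as
\begin{equation*}
\mathcal{K}(x)=\left(2\mathcal{K}_{0}(x)-1\right)+\frac{\mathcal{K}_{0}(x)\,\mathcal{T}(x)}{2}\,\mathcal{K}_{1}(x)=\widetilde{\mathcal{K}}(x)+\frac{\mathcal{K}_{0}(x)\,\mathcal{T}(x)}{2}\,\mathcal{K}_{1}(x),
\end{equation*}
so that the claim reduces to showing that the extra term is nonnegative and that it may be discarded against $\Gamma(f,f)$.

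The key steps are then the following. The factor $\mathcal{K}_{0}(x)$ is nonnegative because $\mathcal{P}$ takes values in $[0,1]$; the triangle function $\mathcal{T}(x)$ is nonnegative as an infimum of cardinalities; and $\mathcal{K}_{1}(x)$ is nonnegative because each ratio $\mathcal{P}(y,z)/\mathcal{P}(y,x)$ is a quotient of nonnegative quantities with strictly positive denominator, noting that $y\in\mathcal{N}_{x}$ forces $\mathcal{P}(y,x)=\mathfrak{m}_{yx}/\mathfrak{m}(y)>0$ by property (2) of $\mathfrak{m}_{xy}$. Hence $\mathcal{K}(x)\geq\widetilde{\mathcal{K}}(x)$ at every $x\in V$. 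Finally, since $\Gamma(f,f)(x)=\frac{1}{2}\sum_{y\in V}(f(x)-f(y))^{2}\mathcal{P}(x,y)\geq 0$ by (\ref{eq:gradient}), multiplying the pointwise inequality $\mathcal{K}\geq\widetilde{\mathcal{K}}$ by $\Gamma(f,f)$ preserves its direction, and substituting into the conclusion of Theorem \ref{thm:cd ineq} yields
\begin{equation*}
\Gamma_{2}(f,f)\geq\frac{1}{2}(\Delta f)^{2}+\mathcal{K}\,\Gamma(f,f)\geq\frac{1}{2}(\Delta f)^{2}+\widetilde{\mathcal{K}}\,\Gamma(f,f).
\end{equation*}

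There is essentially no genuine obstacle here: the entire content is the sign bookkeeping showing that the dropped term $\mathcal{K}_{0}\mathcal{T}\mathcal{K}_{1}/2$ is nonnegative. The only point requiring a moment of care is confirming that $\Gamma(f,f)$ is everywhere nonnegative, so that weakening the coefficient from $\mathcal{K}$ to the smaller $\widetilde{\mathcal{K}}$ genuinely weakens the inequality rather than reversing it; this is immediate from the explicit formula (\ref{eq:gradient}).
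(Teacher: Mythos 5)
Your proposal is correct and takes essentially the same route as the paper: the paper derives Corollary \ref{cor:simple curvature dimension} from Theorem \ref{thm:cd ineq} in one line, implicitly discarding the nonnegative term $\tfrac{1}{2}\mathcal{K}_{0}\,\mathcal{T}\,\mathcal{K}_{1}$ exactly as you do. Your sign bookkeeping (positivity of $\mathcal{P}(y,x)$ for $y\in\mathcal{N}_{x}$, nonnegativity of $\mathcal{T}$, and nonnegativity of $\Gamma(f,f)$ from (\ref{eq:gradient})) simply makes explicit the details behind the paper's word ``immediately.''
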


Lin-Yau \cite{LY} have established Corollary \ref{cor:simple curvature dimension} in the undirected case (see Theorems 1.2 and 1.3 in \cite{LY}, and also Theorem 8 in \cite{JL}).

\begin{rem}\label{rem:Yamada estimate}
In the unweighted case,
the third author \cite{Y2} has shown
\begin{equation*}
\Gamma_{2}(f,f)\geq \cfrac{1}{2}(\Delta f)^{2} + \mathcal{K}'\,\Gamma(f,f)
\end{equation*}
for a function $\mathcal{K}':V\to \mathbb{R}$ defined as
\begin{equation*}
\mathcal{K}'(x):=\min \left\{ \inf_{y\in N_{x}}P(y,x), \inf_{y\in \olarrow{N}_{x}} \olarrow{P}(y,x)  \right\}-1,
\end{equation*}
which is slightly weaker than Corollary \ref{cor:simple curvature dimension}.
\end{rem}

\subsection{Ricci curvature and curvature-dimension inequalities}

By Proposition \ref{prop:upper bound of Ricci curvature}
we obtain the following relation between our Ricci curvature and curvature-dimension inequality:
\begin{cor}
For $K\in \mathbb{R}$,
we assume $\inf_{x,y}\kappa(x,y)\geq K$,
where the infimum is taken over all $x,y\in V$ with $x \neq y$.
Then for every $f:V\to \mathbb{R}$
we have
\begin{equation*}
\Gamma_{2}(f,f) \geq \cfrac{1}{2}(\Delta f)^{2} + \widehat{\mathcal{K}} \Gamma(f,f)
\end{equation*}
on $V$,
where a function $\widehat{\mathcal{K}}:V\to \mathbb{R}$ is defined as
\begin{equation*}
\widehat{\mathcal{K}}(x):=2K-3+\frac{K-1}{2}\mathcal{T}(x)\left(\inf_{y\in \mathcal{N}_{x}} \inf_{z\in \mathcal{N}_{x}\cap \mathcal{N}_{y}}\, \frac{\mathcal{P}(y,z)}{\mathcal{P}(y,x)}\right).
\end{equation*}
\end{cor}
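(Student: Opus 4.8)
The plan is to deduce the statement from a pointwise comparison $\mathcal{K}(x)\geq\widehat{\mathcal{K}}(x)$ between the function $\mathcal{K}$ of Theorem \ref{thm:cd ineq} and $\widehat{\mathcal{K}}$, and then to combine this with Theorem \ref{thm:cd ineq} and the nonnegativity of $\Gamma(f,f)$. Abbreviating $\mathcal{K}_{0}(x):=\inf_{y\in\mathcal{N}_{x}}\mathcal{P}(y,x)$ and $\mathcal{K}_{1}(x):=\inf_{y\in\mathcal{N}_{x}}\inf_{z\in\mathcal{N}_{x}\cap\mathcal{N}_{y}}\mathcal{P}(y,z)/\mathcal{P}(y,x)$, we may rewrite $\mathcal{K}(x)=\mathcal{K}_{0}(x)\{2+(\mathcal{T}(x)/2)\mathcal{K}_{1}(x)\}-1$, so the whole corollary reduces to the single estimate $\mathcal{K}_{0}(x)\geq K-1$.

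First I would show that the hypothesis $\kappa\geq K$ forces $\mathcal{P}(a,b)\geq K-1$ for every edge $(a,b)\in E$. By Proposition \ref{prop:upper bound of Ricci curvature}, $\kappa(a,b)\leq\mathcal{P}(a,b)+\mathcal{P}(b,a)+\left(\sum_{z\in\mathcal{N}_{a}\cap\mathcal{N}_{b}}\mathcal{P}(a,z)\right)\wedge\left(\sum_{z\in\mathcal{N}_{a}\cap\mathcal{N}_{b}}\mathcal{P}(b,z)\right)$. Bounding the minimum by the second sum and noting that $a$ and the indices $z\in\mathcal{N}_{a}\cap\mathcal{N}_{b}$ are distinct elements of $V\setminus\{b\}$, so that $\mathcal{P}(b,a)+\sum_{z\in\mathcal{N}_{a}\cap\mathcal{N}_{b}}\mathcal{P}(b,z)\leq 1$, we obtain $\kappa(a,b)\leq 1+\mathcal{P}(a,b)$; this is the mirror image of the estimate in Remark \ref{rem:simpler estimate}, which instead bounds the minimum by the first sum and gives $\kappa(a,b)\leq 1+\mathcal{P}(b,a)$. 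Combined with $\kappa(a,b)\geq K$, these two inequalities yield both $\mathcal{P}(a,b)\geq K-1$ and $\mathcal{P}(b,a)\geq K-1$.

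Next, fix $x\in V$ and $y\in\mathcal{N}_{x}=N_{x}\cup\olarrow{N}_{x}$. Then $x$ and $y$ are joined by an edge in at least one direction, and applying the previous step to that edge gives $\mathcal{P}(y,x)\geq K-1$ in either case; taking the infimum over $y$ shows $\mathcal{K}_{0}(x)\geq K-1$. Since $\mathcal{T}(x)\geq 0$ and $\mathcal{K}_{1}(x)\geq 0$, the factor $2+(\mathcal{T}(x)/2)\mathcal{K}_{1}(x)$ is nonnegative, so multiplying $\mathcal{K}_{0}(x)\geq K-1$ by it and subtracting $1$ gives $\mathcal{K}(x)\geq 2K-3+\tfrac{K-1}{2}\mathcal{T}(x)\mathcal{K}_{1}(x)=\widehat{\mathcal{K}}(x)$. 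Finally, Theorem \ref{thm:cd ineq} yields $\Gamma_{2}(f,f)\geq\tfrac12(\Delta f)^{2}+\mathcal{K}\,\Gamma(f,f)$, and since $\Gamma(f,f)\geq 0$ pointwise by (\ref{eq:gradient}) and $\mathcal{K}\geq\widehat{\mathcal{K}}$, the asserted inequality follows.

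The main obstacle is controlling $\mathcal{K}_{0}$ at the purely inner neighbors $y\in\olarrow{N}_{x}\setminus N_{x}$: for such $y$ the only available edge is $(y,x)$, and Remark \ref{rem:simpler estimate} as stated then bounds $\mathcal{P}(x,y)$, not the quantity $\mathcal{P}(y,x)$ entering $\mathcal{K}_{0}$; passing from one to the other would introduce the Perron ratio $\mathfrak{m}(x)/\mathfrak{m}(y)$, which we cannot control. The key observation that removes this difficulty is that the upper bound of Proposition \ref{prop:upper bound of Ricci curvature} is symmetric enough to give $\mathcal{P}(a,b)\geq K-1$ directly for every edge, so no Perron ratios appear.
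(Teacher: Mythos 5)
Your proof is correct and takes essentially the same route as the paper: the paper likewise fixes $x$, uses Proposition \ref{prop:upper bound of Ricci curvature} (via Remark \ref{rem:simpler estimate}) to get $\mathcal{P}(y,x)\geq K-1$ for $y\in N_{x}$, handles $y\in \olarrow{N}_{x}$ ``similarly,'' concludes $\inf_{y\in \mathcal{N}_{x}}\mathcal{P}(y,x)\geq K-1$, and then applies Theorem \ref{thm:cd ineq}. Your explicit mirror estimate $\kappa(a,b)\leq 1+\mathcal{P}(a,b)$, obtained by bounding the minimum in Proposition \ref{prop:upper bound of Ricci curvature} by the second sum, is precisely the step the paper's ``similarly'' leaves implicit, so your write-up just makes the same argument fully explicit.
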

\begin{proof}
Let us fix $x\in V$.
For every $y\in N_{x}$,
Proposition \ref{prop:upper bound of Ricci curvature} implies $\kappa(x,y)\leq 1+\mathcal{P}(y,x)$,
and hence $\mathcal{P}(y,x)\geq K-1$ (see Remark \ref{rem:simpler estimate}).
Similarly,
in virtue of Proposition \ref{prop:upper bound of Ricci curvature},
we also see $\mathcal{P}(y,x)\geq K-1$ for all $y\in \olarrow{N}_{x}$.
Thus we obtain $\inf_{y\in \mathcal{N}_{x}}\mathcal{P}(y,x)\geq K-1$.
Due to Theorem \ref{thm:cd ineq},
we complete the proof.
\end{proof}

Also,
combining Proposition \ref{prop:upper bound of Ricci curvature} with Corollary \ref{cor:simple curvature dimension} implies:
\begin{cor}
For $K\in \mathbb{R}$,
we assume $\inf_{x,y}\kappa(x,y)\geq K$,
where the infimum is taken over all $x,y\in V$ with $x \neq y$.
Then for every $f:V\to \mathbb{R}$
we have
\begin{equation*}
\Gamma_{2}(f,f) \geq \cfrac{1}{2}(\Delta f)^{2} +  (2K-3)\Gamma(f,f).
\end{equation*}
\end{cor}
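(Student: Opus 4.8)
The plan is to reduce the statement to Corollary \ref{cor:simple curvature dimension} by showing that the hypothesis $\inf_{x,y}\kappa(x,y)\geq K$ forces the pointwise lower bound $\widetilde{\mathcal{K}}(x)\geq 2K-3$, where $\widetilde{\mathcal{K}}$ is the curvature function appearing there. Since $\widetilde{\mathcal{K}}(x)=2\inf_{y\in\mathcal{N}_x}\mathcal{P}(y,x)-1$, the entire argument comes down to establishing
\begin{equation*}
\inf_{y\in\mathcal{N}_x}\mathcal{P}(y,x)\geq K-1\qquad\text{for every }x\in V.
\end{equation*}

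First I would fix $x\in V$ and split $\mathcal{N}_x=N_x\cup\olarrow{N}_x$. For $y\in N_x$ the pair $(x,y)$ is an edge, so Remark \ref{rem:simpler estimate} yields $\kappa(x,y)\leq 1+\mathcal{P}(y,x)$; combined with $\kappa(x,y)\geq K$ this gives $\mathcal{P}(y,x)\geq K-1$ at once.

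The one point that is not completely automatic is the inner neighborhood. For $y\in\olarrow{N}_x$ the relevant edge is $(y,x)$, and applying Remark \ref{rem:simpler estimate} verbatim to $(y,x)$ only produces $\kappa(y,x)\leq 1+\mathcal{P}(x,y)$, i.e.\ information about $\mathcal{P}(x,y)$ rather than the $\mathcal{P}(y,x)$ we need. The fix is to return to Proposition \ref{prop:upper bound of Ricci curvature} for the edge $(y,x)$ and exploit that the minimum term may be estimated by \emph{either} of its two arguments. Bounding it by $\sum_{z\in\mathcal{N}_x\cap\mathcal{N}_y}\mathcal{P}(x,z)$ and using $\mathcal{P}(x,y)+\sum_{z\in\mathcal{N}_x\cap\mathcal{N}_y}\mathcal{P}(x,z)\leq 1$ (valid because $y\notin\mathcal{N}_x\cap\mathcal{N}_y$, as the graph is simple) produces the symmetric estimate $\kappa(y,x)\leq 1+\mathcal{P}(y,x)$; together with $\kappa(y,x)\geq K$ this again gives $\mathcal{P}(y,x)\geq K-1$. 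Hence $\inf_{y\in\mathcal{N}_x}\mathcal{P}(y,x)\geq K-1$ for every $x$.

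Finally I would feed this into $\widetilde{\mathcal{K}}(x)=2\inf_{y\in\mathcal{N}_x}\mathcal{P}(y,x)-1\geq 2(K-1)-1=2K-3$, and invoke Corollary \ref{cor:simple curvature dimension} together with the nonnegativity of $\Gamma(f,f)$ (immediate from (\ref{eq:gradient})) to replace $\widetilde{\mathcal{K}}$ by the constant $2K-3$ in the curvature term, yielding the claimed inequality. I expect the main obstacle to be precisely the inner-neighborhood bookkeeping above, namely recognizing that the $\wedge$ in Proposition \ref{prop:upper bound of Ricci curvature} can be discharged in the second slot to symmetrize Remark \ref{rem:simpler estimate}; everything else is a direct substitution.
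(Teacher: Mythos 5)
Your proposal is correct and follows essentially the same route as the paper: the paper's (very terse) proof likewise combines Proposition \ref{prop:upper bound of Ricci curvature} with Corollary \ref{cor:simple curvature dimension} via the pointwise bound $\inf_{y\in\mathcal{N}_x}\mathcal{P}(y,x)\geq K-1$, established in the proof of the preceding corollary. The inner-neighborhood subtlety you isolate is precisely the step the paper waves off with ``Similarly,'' and your resolution --- discharging the $\wedge$ in Proposition \ref{prop:upper bound of Ricci curvature} in its second slot for the edge $(y,x)$, using that $y\notin\mathcal{N}_x\cap\mathcal{N}_y$ by simpleness --- is the correct justification of that step.
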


\section{Comparison geometric results}\label{sec:Comparison geometric results}
In the present section,
we study various comparison geometric results.

\subsection{Eigenvalue comparisons}\label{sec:Eigenvalue comparisons}
In this first subsection,
we study an eigenvalue comparison of Lichnerowicz type.
We denote by
\begin{equation*}
0=\lambda_{0}<\lambda_{1}\leq \lambda_{2} \leq \dots \leq \lambda_{n-1}
\end{equation*}
the eigenvalues of $\mathcal{L}$.
We here notice that
for any non-zero function $f:V\to \mathbb{R}$,
its associated \textit{Rayleigh quotient} is given by
\begin{equation*}
\mathcal{R}(f):=\frac{1}{2}\frac{\sum_{x,y\in V}(f(y)-f(x))^{2}\mathfrak{m}_{xy}}{(f,f)}
\end{equation*}
in view of Proposition \ref{prop:integration by parts}.

To derive an eigenvalue comparison,
for $\epsilon>0$,
we consider the \textit{$\epsilon$-averaging operator} $\mathcal{A}^{\epsilon}:\mathcal{F}\to \mathcal{F}$ defined as 
\begin{equation*}
\mathcal{A}^{\epsilon}f(x):=\sum_{z\in V}f(z)\nu^{\epsilon}_{x}(z),
\end{equation*}
where $\nu^{\epsilon}_{x}$ is defined as (\ref{eq:random walk}).
Let us verify the following:
\begin{lem}\label{lem:Lipschitz property of averaging operator}
For $L>0$,
let $f:V\to \mathbb{R}$ be an $L$-Lipschitz function.
For $K>0$,
we assume $\inf_{x,y\in V}\kappa_{\epsilon}(x,y)\geq \epsilon K$,
where $\kappa_{\epsilon}$ is defined as $(\ref{eq:pre Ricci curvature})$,
and the infimum is taken over all $x,y\in V$ with $x \neq y$.
Then $\mathcal{A}^{\epsilon}f$ is $(1-\epsilon\,K)L$-Lipschitz.
\end{lem}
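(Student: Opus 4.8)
The plan is to control the one-sided Lipschitz difference $\mathcal{A}^{\epsilon}f(y)-\mathcal{A}^{\epsilon}f(x)$ directly via the Kantorovich-Rubinstein duality, and then to invoke the curvature hypothesis to bound the Wasserstein distance. I would fix $x,y\in V$ (the case $x=y$ being trivial) and rewrite the difference straight from the definition of the averaging operator as
\[
\mathcal{A}^{\epsilon}f(y)-\mathcal{A}^{\epsilon}f(x)=\sum_{z\in V}f(z)\left(\nu^{\epsilon}_{y}(z)-\nu^{\epsilon}_{x}(z)\right).
\]

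Since $f$ is $L$-Lipschitz with $L>0$, the rescaled function $f/L$ lies in $\Lip_{1}(V)$, so applying Proposition \ref{prop:Kantorovich duality} to the pair $(\nu^{\epsilon}_{x},\nu^{\epsilon}_{y})$ gives
\[
\frac{1}{L}\sum_{z\in V}f(z)\left(\nu^{\epsilon}_{y}(z)-\nu^{\epsilon}_{x}(z)\right)\leq \sup_{g\in \Lip_{1}(V)}\sum_{z\in V}g(z)\left(\nu^{\epsilon}_{y}(z)-\nu^{\epsilon}_{x}(z)\right)=W(\nu^{\epsilon}_{x},\nu^{\epsilon}_{y}),
\]
and hence $\mathcal{A}^{\epsilon}f(y)-\mathcal{A}^{\epsilon}f(x)\leq L\,W(\nu^{\epsilon}_{x},\nu^{\epsilon}_{y})$. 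Next I would feed in the curvature assumption: unwinding the definition (\ref{eq:pre Ricci curvature}), the bound $\kappa_{\epsilon}(x,y)\geq \epsilon K$ is exactly $W(\nu^{\epsilon}_{x},\nu^{\epsilon}_{y})\leq (1-\epsilon K)\,d(x,y)$. Substituting this yields
\[
\mathcal{A}^{\epsilon}f(y)-\mathcal{A}^{\epsilon}f(x)\leq (1-\epsilon K)L\,d(x,y),
\]
which is precisely the $(1-\epsilon K)L$-Lipschitz estimate, and since $x,y$ were arbitrary the proof concludes.

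The argument is short, and I do not anticipate a genuine obstacle; the only point demanding care is the direction of the duality together with the non-symmetry of $d$. Because the Lipschitz condition here is the one-sided inequality $f(y)-f(x)\leq L\,d(x,y)$, I must apply Proposition \ref{prop:Kantorovich duality} with the measures in the order $(\nu^{\epsilon}_{x},\nu^{\epsilon}_{y})$, so that the supremum matches $W(\nu^{\epsilon}_{x},\nu^{\epsilon}_{y})$ rather than the reverse Wasserstein distance, and I should avoid symmetrizing. This asymmetry is harmless for the final conclusion, but it is exactly the place where a careless ordering would break the estimate.
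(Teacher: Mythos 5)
Your proof is correct and is essentially identical to the paper's own argument: both express $\mathcal{A}^{\epsilon}f(y)-\mathcal{A}^{\epsilon}f(x)$ as $\sum_{z\in V}f(z)(\nu^{\epsilon}_{y}(z)-\nu^{\epsilon}_{x}(z))$, bound it by $L\,W(\nu^{\epsilon}_{x},\nu^{\epsilon}_{y})$ via Proposition \ref{prop:Kantorovich duality}, and then use the definition (\ref{eq:pre Ricci curvature}) together with the hypothesis $\kappa_{\epsilon}(x,y)\geq \epsilon K$ to conclude $W(\nu^{\epsilon}_{x},\nu^{\epsilon}_{y})\leq (1-\epsilon K)\,d(x,y)$. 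Your additional care about the ordering of the measures under the non-symmetric distance, and the remark that $x=y$ is trivial, are correct refinements of the same argument.
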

\begin{proof}
Let $x,y\in V$.
Using Proposition \ref{prop:Kantorovich duality},
we have
\begin{align*}
\mathcal{A}^{\epsilon}f(y)-\mathcal{A}^{\epsilon}f(x) &  =   \sum_{z\in V}f(z)(\nu^{\epsilon}_{y}(z)-\nu^{\epsilon}_{x}(z))\\
                                                    &\leq L\, W(\nu^{\epsilon}_{x},\nu^{\epsilon}_{y})=(1- \,\kappa_{\epsilon}(x,y))\,L\,d(x,y)\leq (1-\epsilon K)\,L\,d(x,y).
\end{align*}
This proves the lemma.
\end{proof}

From Lemma \ref{lem:Lipschitz property of averaging operator},
we conclude the following eigenvalue comparison of Lichnerowicz type:
\begin{thm}\label{thm:LLY comparison}
For $K>0$,
we assume $\inf_{x,y}\kappa(x,y)\geq K$,
where the infimum is taken over all $x,y\in V$ with $x \neq y$.
Then we have
\begin{equation*}
\lambda_{1}\geq K.
\end{equation*}
\end{thm}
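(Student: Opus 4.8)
The plan is to test the hypothesis against an eigenfunction of $\mathcal{L}$ for $\lambda_{1}$ by means of the $\epsilon$-averaging operator $\mathcal{A}^{\epsilon}$, and then to pass to the limit $\epsilon\to 0$. Since $\mathcal{L}$ is symmetric with respect to $(\cdot,\cdot)$ by Proposition \ref{prop:integration by parts}, I would first fix a real eigenfunction $f$ with $\mathcal{L}f=\lambda_{1}f$; as $\lambda_{1}>0$, this $f$ is non-constant. Let $L:=\max_{x\neq y}(f(y)-f(x))/d(x,y)>0$ be its smallest Lipschitz constant, attained at some ordered pair $(x_{0},y_{0})$ by the finiteness of $V$, so that $f(y_{0})-f(x_{0})=L\,d(x_{0},y_{0})$.

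The key identity comes from Lemma \ref{lem:integration property of random walk}: since $\Delta f=-\mathcal{L}f=-\lambda_{1}f$, we have $\mathcal{A}^{\epsilon}f=f+\epsilon\,\Delta f=(1-\epsilon\lambda_{1})f$. On the one hand, taking the difference of this at $(x_{0},y_{0})$ gives $\mathcal{A}^{\epsilon}f(y_{0})-\mathcal{A}^{\epsilon}f(x_{0})=(1-\epsilon\lambda_{1})\,L\,d(x_{0},y_{0})$. On the other hand, applying the Kantorovich--Rubinstein duality (Proposition \ref{prop:Kantorovich duality}) to $f/L\in\Lip_{1}(V)$ --- exactly the estimate in the proof of Lemma \ref{lem:Lipschitz property of averaging operator} --- yields $\mathcal{A}^{\epsilon}f(y_{0})-\mathcal{A}^{\epsilon}f(x_{0})\leq L\,W(\nu^{\epsilon}_{x_{0}},\nu^{\epsilon}_{y_{0}})=L\,(1-\kappa_{\epsilon}(x_{0},y_{0}))\,d(x_{0},y_{0})$, where the last equality is the definition (\ref{eq:pre Ricci curvature}) of $\kappa_{\epsilon}$. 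For $\epsilon$ small enough that $1-\epsilon\lambda_{1}>0$, comparing the two expressions and dividing by $L\,d(x_{0},y_{0})>0$ gives $\lambda_{1}\geq \kappa_{\epsilon}(x_{0},y_{0})/\epsilon$.

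Finally I would let $\epsilon\to 0$. By Lemmas \ref{lem:preconcavity} and \ref{lem:preboundedness} the limit exists, and $\kappa_{\epsilon}(x_{0},y_{0})/\epsilon\to\kappa(x_{0},y_{0})$ by Definition \ref{defi:Ricci curvature}; since $\kappa(x_{0},y_{0})\geq K$ by hypothesis, we conclude $\lambda_{1}\geq K$.

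The main obstacle is that one cannot feed the hypothesis into Lemma \ref{lem:Lipschitz property of averaging operator} directly: the monotonicity in Lemma \ref{lem:preconcavity} shows that $\kappa_{\epsilon}(x,y)/\epsilon$ is non-increasing, hence only bounded \emph{above} by $\kappa(x,y)$, so the lower bound $\kappa_{\epsilon}(x,y)\geq\epsilon K$ required by that lemma need not hold for any fixed $\epsilon>0$. The device that circumvents this is to localize to the single extremal pair $(x_{0},y_{0})$ determined by $f$ and to extract the curvature bound only in the limit $\epsilon\to 0$, where the correct inequality is recovered. Equivalently, one may set $K_{\epsilon}:=\inf_{x\neq y}\kappa_{\epsilon}(x,y)/\epsilon$, invoke Lemma \ref{lem:Lipschitz property of averaging operator} with this $K_{\epsilon}$, and use the finiteness of $V$ to pass the limit through the infimum, since then $K_{\epsilon}\to\inf_{x\neq y}\kappa(x,y)\geq K$.
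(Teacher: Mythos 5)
Your proof is correct, but it takes a genuinely different route from the paper. The paper follows Lin--Lu--Yau and Ollivier: it identifies $1-\epsilon\lambda_{1}$ with the spectral radius of $\mathcal{A}^{\epsilon}$ acting on the orthogonal complement $\mathcal{F}_{c}$ of the constants, characterizes that radius as $\lim_{k\to\infty}\Vert(\mathcal{A}^{\epsilon})^{k}\Vert_{\op}^{1/k}$, and bounds the iterates through the Lipschitz-contraction property of Lemma \ref{lem:Lipschitz property of averaging operator}. You instead test the eigenfunction directly: since $\mathcal{A}^{\epsilon}f=(1-\epsilon\lambda_{1})f$ by Lemma \ref{lem:integration property of random walk}, evaluating at a pair $(x_{0},y_{0})$ realizing the extremal Lipschitz ratio of $f$ and applying Kantorovich--Rubinstein duality yields $\lambda_{1}\geq\kappa_{\epsilon}(x_{0},y_{0})/\epsilon$ for every $\epsilon>0$, and letting $\epsilon\to 0$ along this \emph{fixed} pair gives $\lambda_{1}\geq\kappa(x_{0},y_{0})\geq K$. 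Your argument is more elementary and self-contained --- no spectral radius, no operator-norm limit, no identification $\SR(\mathcal{A}^{\epsilon})=1-\epsilon\lambda_{1}$ for small $\epsilon$ --- though it leans on finiteness of $V$ to attain the extremal pair, whereas the spectral-radius route is the one that generalizes to Ollivier's setting of Markov chains on general spaces and additionally yields geometric decay of $\Vert(\mathcal{A}^{\epsilon})^{k}\Vert_{\op}$, i.e., mixing information beyond the single eigenvalue bound. Your closing remark is also a genuine and valuable observation: by Lemma \ref{lem:preconcavity} the ratio $\kappa_{\epsilon}(x,y)/\epsilon$ is non-increasing in $\epsilon$, so the hypothesis $\inf\kappa\geq K$ gives only $\kappa_{\epsilon}(x,y)\leq\epsilon\,\kappa(x,y)$ and \emph{not} the bound $\kappa_{\epsilon}\geq\epsilon K$ demanded by Lemma \ref{lem:Lipschitz property of averaging operator}; the paper's outline glosses over exactly this point when it invokes that lemma, and a rigorous implementation of its route needs the same device you describe (apply the lemma with $K_{\epsilon}:=\inf_{x\neq y}\kappa_{\epsilon}(x,y)/\epsilon$ and use finiteness of $V$ to get $K_{\epsilon}\to\inf_{x\neq y}\kappa(x,y)\geq K$). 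Your fixed-pair limiting argument sidesteps this issue cleanly, since $(x_{0},y_{0})$ does not depend on $\epsilon$.
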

\begin{proof}
This estimate has been obtained by Lin-Lu-Yau \cite{LLY} in the undirected case (see Theorem 4.2 in \cite{LLY}, and cf. Proposition 30 in \cite{O} and Theorem 4 in \cite{BJL}).
One can show the desired inequality by the same argument as in the proof of Theorem 4.2 in \cite{LLY}, or Proposition 30 in \cite{O}.
We only give an outline of the proof.

Let $\mathcal{F}_{c}\subset \mathcal{F}$ denote the orthogonal complement of the set of all constant functions on $V$.
For sufficiently small $\epsilon>0$
the spectral radius $\SR(\mathcal{A}^{\epsilon})$ of $\mathcal{A}^{\epsilon}$ over $\mathcal{F}_{c}$ is equal to $1-\epsilon \lambda_{1}$.
On the other hand,
the spectral radius is known to be characterized as
\begin{equation*}
\SR(\mathcal{A}^{\epsilon})=\lim_{k\to \infty} \Vert (\mathcal{A}^{\epsilon})^{k} \Vert^{1/k}_{\op},
\end{equation*}
here $\Vert \cdot \Vert_{\op}$ is the operator norm induced from the norm
\begin{equation*}
\Vert f \Vert^{2}_{\Var}:=\frac{1}{2} \sum_{x,y\in V}(f(y)-f(x))^{2}\mathfrak{m}_{xy}
\end{equation*}
on $\mathcal{F}_{c}$.
In view of Lemma \ref{lem:Lipschitz property of averaging operator},
the same argument as in \cite{LLY}, \cite{O} leads to
\begin{equation*}
1-\epsilon \lambda_{1}=\SR(\mathcal{A}^{\epsilon})=\lim_{k\to \infty} \Vert (\mathcal{A}^{\epsilon})^{k} \Vert^{1/k}_{\op}\leq 1-\epsilon K.
\end{equation*}
This completes the proof.
\end{proof}

In the undirected case,
we have a further work on eigenvalue comparisons (see \cite{BJL}).

\subsection{Diameter comparisons}\label{sec:Diameter comparisons}
In this second subsection,
we examine a diameter comparison of Bonnet-Myers type.
Let us show the following:
\begin{thm}\label{thm:LLY diameter comparison}
Let $x,y \in V$ with $x \neq y$.
If $\kappa(x,y)>0$, then
\begin{equation*}
d(x,y) \leq \frac{\mathcal{H}(x,y)} {\kappa(x,y)}.
\end{equation*}
\end{thm}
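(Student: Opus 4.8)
The plan is to read the result off directly from Lemma \ref{lem:preboundedness}, which already contains the essential estimate. That lemma asserts that for every $\epsilon \in [0,1]$,
\begin{equation*}
\frac{\kappa_{\epsilon}(x,y)}{\epsilon} \leq \frac{\mathcal{H}(x,y)}{d(x,y)}.
\end{equation*}
The right hand side does not depend on $\epsilon$, so I would simply let $\epsilon \to 0$. By Definition \ref{defi:Ricci curvature} the left hand side converges to $\kappa(x,y)$, and this limit genuinely exists thanks to Lemmas \ref{lem:preconcavity} and \ref{lem:preboundedness}. Passing to the limit therefore yields the limit-free inequality
\begin{equation*}
\kappa(x,y) \leq \frac{\mathcal{H}(x,y)}{d(x,y)}.
\end{equation*}

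Next I would rearrange. Since $x \neq y$ we have $d(x,y) > 0$, so multiplying the previous display by $d(x,y)$ gives $d(x,y)\,\kappa(x,y) \leq \mathcal{H}(x,y)$. Under the hypothesis $\kappa(x,y) > 0$ I may divide by $\kappa(x,y)$ without reversing the inequality, obtaining
\begin{equation*}
d(x,y) \leq \frac{\mathcal{H}(x,y)}{\kappa(x,y)},
\end{equation*}
which is exactly the assertion. One may also note that the resulting bound is non-vacuous because $\mathcal{H}(x,y) \geq 2 > 0$, as recorded just after the definition of the mixed asymptotic mean curvature.

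I expect no genuine obstacle here: the whole content is already packaged in Lemma \ref{lem:preboundedness}, and the remaining steps are taking a limit and an elementary rearrangement relying only on the positivity of $\kappa(x,y)$ and of $d(x,y)$. The sole point requiring a moment's care is to confirm that the limit defining $\kappa(x,y)$ exists before passing to it in the inequality, but this is guaranteed by the monotonicity of $\kappa_{\epsilon}(x,y)/\epsilon$ from Lemma \ref{lem:preconcavity} together with the upper bound from Lemma \ref{lem:preboundedness}.
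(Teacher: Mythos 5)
Your proposal is correct and is exactly the paper's own argument: the paper proves Theorem \ref{thm:LLY diameter comparison} by letting $\epsilon \to 0$ in the inequality (\ref{eq:preboundedness}) of Lemma \ref{lem:preboundedness} and rearranging, which is precisely what you do (you merely spell out the existence of the limit via Lemmas \ref{lem:preconcavity} and \ref{lem:preboundedness}, which the paper leaves implicit from Definition \ref{defi:Ricci curvature}).
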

\begin{proof}
We complete the proof by letting $\epsilon \to 0$ in (\ref{eq:preboundedness}).
\end{proof}
Lin-Lu-Yau \cite{LLY} have proved Theorem \ref{thm:LLY diameter comparison} in the undirected case (see Theorem 4.1 in \cite{LLY}).

Theorem \ref{thm:LLY diameter comparison} yields the following inscribed radius estimate:
\begin{cor}\label{cor:inrad comparison}
Let $x\in V$.
For $K>0$
we assume $\inf_{y\in V \setminus \{x\}}\kappa(x,y) \geq K$.
For $\Lambda \in [2,\infty)$
we further assume $\sup_{y\in V\setminus \{x\}}\mathcal{H}(x,y)\leq \Lambda$.
Then
\begin{equation*}
\IR_{x}V \leq \frac{\Lambda}{K}.
\end{equation*}
\end{cor}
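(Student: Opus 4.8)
The plan is to apply the diameter comparison of Theorem \ref{thm:LLY diameter comparison} pointwise and then pass to a supremum. First I would fix an arbitrary vertex $y\in V\setminus\{x\}$. By the hypothesis $\inf_{y\in V\setminus\{x\}}\kappa(x,y)\geq K$ we have $\kappa(x,y)\geq K>0$, so Theorem \ref{thm:LLY diameter comparison} is applicable and yields
\begin{equation*}
d(x,y)\leq \frac{\mathcal{H}(x,y)}{\kappa(x,y)}.
\end{equation*}

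Next I would estimate the right-hand side using the two curvature bounds. Recall that $\mathcal{H}(x,y)\geq 2>0$ always holds and that $\kappa(x,y)\geq K>0$ by assumption, so both numerator and denominator are positive. Consequently the hypotheses $\mathcal{H}(x,y)\leq \Lambda$ and $\kappa(x,y)\geq K$ combine by monotonicity of the quotient into
\begin{equation*}
\frac{\mathcal{H}(x,y)}{\kappa(x,y)}\leq \frac{\Lambda}{K}.
\end{equation*}
Hence $d(x,y)=\rho_{x}(y)\leq \Lambda/K$ for every $y\in V\setminus\{x\}$; the case $y=x$ is trivial since $\rho_{x}(x)=0\leq \Lambda/K$.

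Finally, taking the supremum over all $y\in V$ and recalling the definition $\IR_{x}V=\sup_{y\in V}\rho_{x}(y)$ from (\ref{eq:inscribed radius}) gives the claimed bound $\IR_{x}V\leq \Lambda/K$. There is no genuine obstacle here: the argument is a direct pointwise application of Theorem \ref{thm:LLY diameter comparison} followed by a single monotonicity estimate for the ratio. The only point requiring (minor) care is to confirm the positivity of both $\mathcal{H}(x,y)$ and $\kappa(x,y)$ so that the separate bounds may legitimately be combined into one inequality for the quotient, and this is immediate from $\mathcal{H}(x,y)\geq 2$ and $K>0$.
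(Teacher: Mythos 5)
Your proof is correct and follows exactly the route the paper intends: Corollary \ref{cor:inrad comparison} is stated as an immediate consequence of Theorem \ref{thm:LLY diameter comparison}, obtained by applying it pointwise, combining the bounds $\mathcal{H}(x,y)\leq \Lambda$ and $\kappa(x,y)\geq K>0$ in the quotient, and taking the supremum over $y$. Your attention to the positivity needed to combine the two bounds, and to the trivial case $y=x$, is exactly the right (minor) care.
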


In the undirected case,
we also have a further work on diameter comparisons (see \cite{CKKLMP}).

\subsection{Volume comparisons}\label{sec:Volume comparisons}
In this third subsection,
for $x\in V$ and $R\geq 0$,
we investigate volume comparisons for the \textit{(forward) metric sphere} and \textit{metric ball} defined as
\begin{equation*}
S_{R}(x):=\{y\in V \mid \rho_{x}(y)=R\},\quad B_{R}(x):=\{y\in V \mid \rho_{x}(y)\leq R\}.
\end{equation*}

To prove our volume comparisons,
we prepare the following lemma:
\begin{lem}\label{lem:vol3}
Let $x\in V$.
For $K\in \mathbb{R}$
we assume $\inf_{y\in V \setminus \{x\}}\kappa(x,y) \geq K$.
For $\Lambda \in [2,\infty)$
we further assume $\sup_{y\in V\setminus \{x\}}\mathcal{H}(x,y)\leq \Lambda$.
Then for all $R\geq 1$ with $KR\leq \Lambda$,
and $y\in S_{R}(x)$,
\begin{equation}\label{eq:vol3}
\sum_{z\in \mathcal{N}_{y}\cap S_{R+1}(x)}\mathcal{P}(y,z)\leq \frac{\Lambda-KR}{2}.
\end{equation}
\end{lem}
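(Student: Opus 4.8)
The plan is to play the forward distance $\rho_{x}$ off against the reverse distance $\olarrow{\rho}_{y}$ at the single vertex $y$: the curvature bound controls $\mathcal{L}\rho_{x}(y)$ from below, while $\mathcal{L}\olarrow{\rho}_{y}(y)=\olarrow{\mathcal{H}}_{y}$ will exactly absorb the awkward terms. Throughout write $p:=\sum_{z\in\mathcal{N}_{y}\cap S_{R+1}(x)}\mathcal{P}(y,z)$ for the quantity to be bounded; since $R\geq1$ forces $y\neq x$, the hypothesis $\inf_{y\in V\setminus\{x\}}\kappa(x,y)\geq K$ applies to the pair $(x,y)$, and I will aim to prove the slightly cleaner statement $2p\leq\Lambda-KR$.

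First I would record the lower bound $\mathcal{L}\rho_{x}(y)\geq KR+\mathcal{H}_{x}$. Indeed $\rho_{x}\in\mathcal{F}_{xy}$ since $\nabla_{xy}\rho_{x}=1$, so Theorem \ref{thm:limit free formula} gives $K\leq\kappa(x,y)\leq\nabla_{xy}\mathcal{L}\rho_{x}$; multiplying through by $d(x,y)=R$ and using $\mathcal{L}\rho_{x}(x)=\mathcal{H}_{x}$ yields the claim.

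The heart of the argument is the complementary upper bound
\begin{equation*}
\mathcal{L}\rho_{x}(y)+2p\leq-\olarrow{\mathcal{H}}_{y}.
\end{equation*}
Using $\sum_{z}\mathcal{P}(y,z)=1$ I would rewrite its left-hand side as $\sum_{z}\mathcal{P}(y,z)\big[(R-d(x,z))+2\,\mathbf{1}_{\{z\in S_{R+1}(x)\}}\big]$ and its right-hand side as $\sum_{z}\mathcal{P}(y,z)\,d(z,y)$, so that, $\mathcal{P}(y,z)$ being nonnegative, it suffices to check the pointwise inequality
\begin{equation*}
(R-d(x,z))+2\,\mathbf{1}_{\{z\in S_{R+1}(x)\}}\leq d(z,y)
\end{equation*}
for every $z\in\mathcal{N}_{y}$. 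If $z\notin S_{R+1}(x)$ this is precisely the triangle inequality $d(x,y)\leq d(x,z)+d(z,y)$; if $z\in S_{R+1}(x)$ the left-hand side equals $R-(R+1)+2=1$, while $d(z,y)\geq1$ because $z\neq y$.

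Chaining the two bounds with the mixed-curvature hypothesis then finishes the proof: $KR+\mathcal{H}_{x}+2p\leq\mathcal{L}\rho_{x}(y)+2p\leq-\olarrow{\mathcal{H}}_{y}$, whence $KR+2p\leq-\mathcal{H}_{x}-\olarrow{\mathcal{H}}_{y}=\mathcal{H}(x,y)\leq\Lambda$, which is (\ref{eq:vol3}). The step I expect to be the main obstacle is \emph{finding} this combination rather than verifying it. Trying to extract $p$ from the lower bound on $\mathcal{L}\rho_{x}(y)$ alone fails, because a forward neighbour $z$ of $y$ with $d(x,z)$ much smaller than $R$ contributes an arbitrarily large positive term $\mathcal{P}(y,z)(R-d(x,z))$ to $\mathcal{L}\rho_{x}(y)$, so $\mathcal{L}\rho_{x}(y)$ admits no useful $O(1)$ upper bound by itself. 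The decisive observation is that exactly these terms are dominated by $d(z,y)$ via the triangle inequality, so the reverse mean curvature $\olarrow{\mathcal{H}}_{y}$ neutralizes them; this is why the proof must bring in $\olarrow{\rho}_{y}$ and why the hypothesis is phrased through $\mathcal{H}(x,y)=-(\mathcal{H}_{x}+\olarrow{\mathcal{H}}_{y})$ rather than through $\mathcal{H}_{x}$ alone.
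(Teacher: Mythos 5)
Your proof is correct, and it takes a genuinely different route from the paper's. The paper argues on the primal (coupling) side: it fixes an optimal coupling $\pi$ of $(\nu^{\epsilon}_{x},\nu^{\epsilon}_{y})$, expands $W(\nu^{\epsilon}_{x},\nu^{\epsilon}_{y})=\sum d(z,w)\pi(z,w)$, lower-bounds each $d(z,w)$ by the same two triangle inequalities you use (split according to whether $w\in S_{R+1}(x)$, with the extra $+1$, respectively the loss of $\olarrow{\rho}_{y}(w)$), marginalizes the coupling to produce $-\epsilon\,\mathcal{H}(x,y)$ and $2\epsilon\sum_{w\in\mathcal{N}_{y}\cap S_{R+1}(x)}\mathcal{P}(y,w)$, and only then divides by $\epsilon$ and lets $\epsilon\to 0$. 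You instead work entirely on the dual side and limit-free: your first inequality $\mathcal{L}\rho_{x}(y)\geq KR+\mathcal{H}_{x}$ is exactly the paper's proof of Theorem \ref{thm:Laplacian comparison} (via Theorem \ref{thm:limit free formula} applied to the test function $\rho_{x}\in\mathcal{F}_{xy}$), and your second inequality
\begin{equation*}
\mathcal{L}\rho_{x}(y)+2\sum_{z\in\mathcal{N}_{y}\cap S_{R+1}(x)}\mathcal{P}(y,z)\leq-\olarrow{\mathcal{H}}_{y}
\end{equation*}
is a purely pointwise estimate on the single probability measure $\mathcal{P}(y,\cdot)$ (using $\sum_{z}\mathcal{P}(y,z)=1$, which holds by the Perron property), with no coupling and no $\epsilon$. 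The two pointwise cases you check are sound: for $z\notin S_{R+1}(x)$ it is the directed triangle inequality $d(x,y)\leq d(x,z)+d(z,y)$, and for $z\in S_{R+1}(x)$ the left side collapses to $1\leq d(z,y)$, valid since $z\neq y$. Chaining and using $\mathcal{H}(x,y)=-(\mathcal{H}_{x}+\olarrow{\mathcal{H}}_{y})\leq\Lambda$ cancels $\mathcal{H}_{x}$, so, like the paper, you never need a separate bound on $\mathcal{H}_{x}$ alone. What your route buys is a cleaner factorization of the lemma into ``Laplacian comparison plus an elementary one-vertex estimate,'' avoiding optimal couplings and the $\epsilon\to 0$ limit (these are already absorbed into Theorem \ref{thm:limit free formula}); what the paper's route buys is self-containedness at the level of the Wasserstein definition of $\kappa_{\epsilon}$, in the spirit of Paeng's original volume-comparison argument, without invoking the M\"unch--Wojciechowski-type characterization.
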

\begin{proof}
Take an optimal coupling $\pi$ of $(\nu^{\epsilon}_{x},\nu^{\epsilon}_{y})$.
We set
\begin{equation*}
\overline{\mathcal{N}}_{x}:=\{x\}\cup \mathcal{N}_{x},\quad \overline{\mathcal{N}}_{y}:=\{y\}\cup \mathcal{N}_{y}.
\end{equation*}
Note that
$\pi$ is supported on $\overline{\mathcal{N}}_{x} \times \overline{\mathcal{N}}_{y}$.
It holds that
\begin{align*}
(1-\kappa_{\epsilon}(x,y))R&=(1-\kappa_{\epsilon}(x,y))d(x,y)=W(\nu^{\epsilon}_{x},\nu^{\epsilon}_{y})=\sum_{z\in \overline{\mathcal{N}}_{x}}\sum_{w\in \overline{\mathcal{N}}_{y}}d(z,w)\pi(z,w)\\
                                &=\sum_{z\in \overline{\mathcal{N}}_{x}}\sum_{w\in \overline{\mathcal{N}}_{y}\cap S_{R+1}(x)}d(z,w)\pi(z,w)+\sum_{z\in \overline{\mathcal{N}}_{x}}\sum_{w\in \overline{\mathcal{N}}_{y}\setminus S_{R+1}(x)}d(z,w)\pi(z,w).
\end{align*}
For all $z\in \overline{\mathcal{N}}_{x}$ and $w\in \overline{\mathcal{N}}_{y}\cap S_{R+1}(x)$ we see
\begin{equation*}
d(z,w)\geq d(x,w)-d(x,z)= (R+1)-\rho_{x}(z)=R-\rho_{x}(z)+1.
\end{equation*}
For all $z\in \overline{\mathcal{N}}_{x}$ and $w\in \overline{\mathcal{N}}_{y}\setminus S_{R+1}(x)$
we also have
\begin{equation*}
d(z,w)\geq d(x,y)-d(x,z)-d(w,y)= R-\rho_{x}(z)-\olarrow{\rho}_{y}(w).
\end{equation*}
It follows that
\begin{align}\label{eq:keyvol1}
(1-\kappa_{\epsilon}(x,y))R&\geq \sum_{z\in \overline{\mathcal{N}}_{x}}\sum_{w\in \overline{\mathcal{N}}_{y}\cap S_{R+1}(x)}(R-\rho_{x}(z)+1)\pi(z,w)\\ \notag
                                             &\quad  +   \sum_{z\in \overline{\mathcal{N}}_{x}}\sum_{w\in \overline{\mathcal{N}}_{y}\setminus S_{R+1}(x)}(R-\rho_{x}(z)-\olarrow{\rho}_{y}(w))\pi(z,w)\\ \notag
                                             &=R+\sum_{z\in \overline{\mathcal{N}}_{x}}\sum_{w\in \overline{\mathcal{N}}_{y}}(-\rho_{x}(z)-\olarrow{\rho}_{y}(w))\pi(z,w)\\ \notag
                                             &\quad  +   \sum_{z\in \overline{\mathcal{N}}_{x}}\sum_{w\in \overline{\mathcal{N}}_{y}\cap S_{R+1}(x)}(1+\olarrow{\rho}_{y}(w))\pi(z,w).              
\end{align}
For the second term of the right hand side of (\ref{eq:keyvol1}),
we deduce
\begin{align}\label{eq:keyvol2}
&\quad\,\,  \sum_{z\in \overline{\mathcal{N}}_{x}}\sum_{w\in \overline{\mathcal{N}}_{y}}(-\rho_{x}(z)-\olarrow{\rho}_{y}(w))\pi(z,w)
=-\sum_{z\in \overline{\mathcal{N}}_{x}}  \rho_{x}(z)\nu^{\epsilon}_{x}(z)-\sum_{w\in \overline{\mathcal{N}}_{y}}  \olarrow{\rho}_{y}(z)\nu^{\epsilon}_{y}(z)\\ \notag
&=-\epsilon \,\sum_{z\in \mathcal{N}_{x}}  \rho_{x}(z)\mathcal{P}(x,z)-\epsilon\,\sum_{w\in \mathcal{N}_{y}}  \olarrow{\rho}_{y}(z)\mathcal{P}(y,z)
=\epsilon (\mathcal{H}_{x}+\olarrow{\mathcal{H}}_{y})=-\epsilon\,\mathcal{H}(x,y)
\end{align}
from (\ref{eq:mean curvature formula}) and (\ref{eq:reverse mean curvature formula}).
For the third term,
we also possess
\begin{align}\label{eq:keyvol3}
&\quad \,\, \sum_{z\in \overline{\mathcal{N}}_{x}}\sum_{w\in \overline{\mathcal{N}}_{y}\cap S_{R+1}(x)}(1+\olarrow{\rho}_{y}(w))\pi(z,w)
=\sum_{w\in \overline{\mathcal{N}}_{y}\cap S_{R+1}(x)}(1+\olarrow{\rho}_{y}(w))\nu^{\epsilon}_{y}(w)\\ \notag
&=\epsilon \sum_{w\in \mathcal{N}_{y}\cap S_{R+1}(x)}(1+\olarrow{\rho}_{y}(w))\mathcal{P}(y,w)\geq 2 \epsilon \sum_{w\in \mathcal{N}_{y}\cap S_{R+1}(x)}\mathcal{P}(y,w)
\end{align}
since $y$ does not belong to $\overline{\mathcal{N}}_{y}\cap S_{R+1}(x)$,
and $\olarrow{\rho}_{y}\geq 1$ on $\mathcal{N}_{y}$.
By (\ref{eq:keyvol1}), (\ref{eq:keyvol2}) and (\ref{eq:keyvol3}),
\begin{equation*}
(1-\kappa_{\epsilon}(x,y))R\geq R-\epsilon\,\mathcal{H}(x,y)+2 \epsilon \sum_{w\in \mathcal{N}_{y}\cap S_{R+1}(x)}\mathcal{P}(y,w).
\end{equation*}
Dividing the both sides of the above inequality by $\epsilon$,
and letting $\epsilon \to 0$,
we obtain
\begin{equation*}
\sum_{w\in \mathcal{N}_{y}\cap S_{R+1}(x)}\mathcal{P}(y,w)\leq \frac{1}{2} \left(\mathcal{H}(x,y)-\kappa(x,y)R\right)\leq \frac{\Lambda-KR}{2}.
\end{equation*}
This proves (\ref{eq:vol3}).
\end{proof}

We set
\begin{equation*}
\mathcal{M}:=\inf_{y\in V} \inf_{z\in \mathcal{N}_{y}} \mathcal{P}(z,y).
\end{equation*}
We establish the following volume comparison (cf. Theorem 1 and Corollary 3 in \cite{P}):
\begin{thm}\label{thm:volume comparison}
Let $x\in V$.
For $K\in \mathbb{R}$
we assume $\inf_{y\in V \setminus \{x\}}\kappa(x,y) \geq K$.
For $\Lambda \in [2,\infty)$
we further assume $\sup_{y\in V\setminus \{x\}}\mathcal{H}(x,y)\leq \Lambda$.
Then for every $R\geq 0$ with $KR\leq \Lambda$,
we have
\begin{equation}\label{eq:volume}
\frac{\mathfrak{m}(S_{R+1}(x))}{\mathfrak{m}(S_{R}(x))}\leq \frac{\Lambda-KR}{2\,\mathcal{M}},
\end{equation}
where $\mathfrak{m}(S_{R}(x))$ is defined as $(\ref{eq:measure})$.
\end{thm}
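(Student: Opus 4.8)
The plan is to prove the equivalent cross-multiplied inequality
\begin{equation*}
\mathcal{M}\,\mathfrak{m}(S_{R+1}(x)) \leq \frac{\Lambda-KR}{2}\,\mathfrak{m}(S_{R}(x)),
\end{equation*}
which yields (\ref{eq:volume}) upon division, and which harmlessly covers the degenerate cases where $S_{R}(x)$ or $S_{R+1}(x)$ is empty. The whole argument will be a double counting of the symmetric weights $\mathfrak{m}_{yw}$ over the pairs $(y,w)$ with $y\in S_{R}(x)$, $w\in S_{R+1}(x)$ and $w\in\mathcal{N}_{y}$. The structural fact I will exploit is the symmetry $\mathfrak{m}_{yw}=\mathfrak{m}_{wy}$ together with $\mathcal{P}(y,w)=\mathfrak{m}_{yw}/\mathfrak{m}(y)$: it lets me read the same quantity $\mathfrak{m}_{yw}$ either as $\mathfrak{m}(y)\,\mathcal{P}(y,w)$, which is exactly what Lemma \ref{lem:vol3} controls, or as $\mathfrak{m}(w)\,\mathcal{P}(w,y)$, which is exactly what the definition of $\mathcal{M}$ controls.

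For the upper estimate, assume first $R\geq 1$ so that Lemma \ref{lem:vol3} applies. For each fixed $y\in S_{R}(x)$ we have
\begin{equation*}
\sum_{w\in\mathcal{N}_{y}\cap S_{R+1}(x)}\mathfrak{m}_{yw}=\mathfrak{m}(y)\sum_{w\in\mathcal{N}_{y}\cap S_{R+1}(x)}\mathcal{P}(y,w)\leq\frac{\Lambda-KR}{2}\,\mathfrak{m}(y),
\end{equation*}
and summing over $y\in S_{R}(x)$ gives
\begin{equation*}
\sum_{y\in S_{R}(x)}\;\sum_{w\in\mathcal{N}_{y}\cap S_{R+1}(x)}\mathfrak{m}_{yw}\leq\frac{\Lambda-KR}{2}\,\mathfrak{m}(S_{R}(x)).
\end{equation*}

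For the lower estimate I reverse the order of summation, using that $w\in\mathcal{N}_{y}$ is equivalent to $y\in\mathcal{N}_{w}$ and that $\mathfrak{m}_{yw}=\mathfrak{m}_{wy}$, to rewrite the left-hand side as $\sum_{w\in S_{R+1}(x)}\sum_{y\in S_{R}(x),\,y\in\mathcal{N}_{w}}\mathfrak{m}_{yw}$. For each $w\in S_{R+1}(x)$ I produce one specific in-neighbor lying on the inner sphere: taking a directed geodesic $x=v_{0}\to\cdots\to v_{R+1}=w$, the penultimate vertex $v_{R}$ satisfies $d(x,v_{R})=R$ and $v_{R}\to w$, so $v_{R}\in S_{R}(x)$ and $w\in\mathcal{N}_{v_{R}}$. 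Since every term is nonnegative, I keep only this one and bound
\begin{equation*}
\sum_{y\in S_{R}(x),\,y\in\mathcal{N}_{w}}\mathfrak{m}_{yw}\geq\mathfrak{m}_{v_{R}w}=\mathfrak{m}(w)\,\mathcal{P}(w,v_{R})\geq\mathcal{M}\,\mathfrak{m}(w),
\end{equation*}
where the last inequality uses $w\in\mathcal{N}_{v_{R}}$ and the definition of $\mathcal{M}$. Summing over $w\in S_{R+1}(x)$ and combining with the upper estimate gives the cross-multiplied inequality for $R\geq 1$.

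Finally, the case $R=0$ must be treated by hand, since Lemma \ref{lem:vol3} requires $R\geq 1$; this is the one place where a separate idea is needed. Here $S_{0}(x)=\{x\}$ and $S_{1}(x)=N_{x}$, and the identical double counting reduces matters to bounding $\sum_{w\in N_{x}}\mathcal{P}(x,w)$. Because $\mathcal{P}(x,\cdot)$ is a probability kernel, $\sum_{w\in N_{x}}\mathcal{P}(x,w)\leq\sum_{w\in V}\mathcal{P}(x,w)=1$, and since $\Lambda\geq 2$ this is at most $\Lambda/2=(\Lambda-KR)/2$, which closes the case. The main obstacle throughout is bookkeeping rather than analysis: one must orient the double sum so that the asymmetric output estimate of Lemma \ref{lem:vol3} (controlling the outgoing probabilities $\mathcal{P}(y,w)$ from the inner sphere) and the incoming infimum defining $\mathcal{M}$ (controlling $\mathcal{P}(w,y)$) act respectively on the two symmetric readings of the single weight $\mathfrak{m}_{yw}$.
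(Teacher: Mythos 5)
Your proposal is correct and is essentially the paper's own argument: both proofs estimate the same double sum $\sum_{y\in S_{R}(x)}\sum_{w\in\mathcal{N}_{y}\cap S_{R+1}(x)}\mathfrak{m}_{yw}$ from above via Lemma \ref{lem:vol3} together with $\mathfrak{m}_{yw}=\mathfrak{m}(y)\mathcal{P}(y,w)$, and from below by $\mathcal{M}\,\mathfrak{m}(S_{R+1}(x))$ using the symmetry $\mathfrak{m}_{yw}=\mathfrak{m}_{wy}$ and the fact that every vertex of $S_{R+1}(x)$ has an in-neighbor on $S_{R}(x)$ --- a fact the paper phrases as the covering $S_{R+1}(x)=\bigcup_{y\in S_{R}(x)}\left(\mathcal{N}_{y}\cap S_{R+1}(x)\right)$ and you make explicit via the penultimate vertex of a directed geodesic. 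The separate treatment of the case $R=0$ (using $S_{1}(x)=N_{x}$, the probability-kernel bound, and $\Lambda\geq 2$) likewise matches the paper's.
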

\begin{proof}
Paeng \cite{P} has obtained a similar result under a lower bound for $\kappa_{1}$ in the undirected and unweighted case (see Theorem 1 and Corollary 3 in \cite{P}).
We will prove (\ref{eq:volume}) along the line of the proof of Theorem 1 in \cite{P}.
We remark that if $K>0$, then $\IR_{x}V \leq \Lambda/K$ due to Corollary \ref{cor:inrad comparison}.

First,
we prove (\ref{eq:volume}) in the case of $R\geq 1$.
We have
\begin{equation}\label{eq:vol1}
\mathfrak{m}(S_{R+1}(x))\leq  \sum_{y\in S_{R}(x)}\mathfrak{m}( \mathcal{N}_{y}\cap S_{R+1}(x))= \sum_{y\in S_{R}(x)} \sum_{z\in \mathcal{N}_{y}\cap S_{R+1}(x)} \mathfrak{m}(z).
\end{equation}
Here we used
\begin{equation*}
S_{R+1}(x)=\bigcup_{y\in S_{R}(x)}\left( \mathcal{N}_{y}\cap S_{R+1}(x)  \right). 
\end{equation*}
On the other hand,
Lemma \ref{lem:vol3} together with $\mathcal{P}(y,z)=\mathfrak{m}_{yz}/\mathfrak{m}(y)$ leads us that
\begin{equation*}
\mathcal{M} \sum_{z\in \mathcal{N}_{y}\cap S_{R+1}(x)}\mathfrak{m}(z) \leq   \sum_{z\in \mathcal{N}_{y}\cap S_{R+1}(x)}\mathfrak{m}_{zy}=   \sum_{z\in \mathcal{N}_{y}\cap S_{R+1}(x)}\mathfrak{m}_{yz}\leq \frac{\Lambda-KR}{2}\mathfrak{m}(y),
\end{equation*}
and hence
\begin{equation}\label{eq:vol2}
\sum_{y\in S_{R}(x)} \sum_{z\in \mathcal{N}_{y}\cap S_{R+1}(x)}\mathfrak{m}(z)   \leq \frac{\Lambda-KR}{2\,\mathcal{M}}\mathfrak{m}(S_{R}(x)).
\end{equation}
Combining (\ref{eq:vol1}) and (\ref{eq:vol2}),
we arrive at the desired inequality (\ref{eq:volume}) when $R\geq 1$.

Next,
we consider the case of $R=0$.
The forward metric sphere $S_{1}(x)$ coincides with the outer neighborhood $N_{x}$,
and hence $S_{1}(x)$ is contained in $\mathcal{N}_{x}$.
It follows that
\begin{align*}
\mathcal{M}\,\mathfrak{m}(S_{1}(x))\leq \mathcal{M}\,\sum_{y\in \mathcal{N}_{x}}\mathfrak{m}(y)\leq \sum_{y\in \mathcal{N}_{x}}\mathfrak{m}_{yx}=\sum_{y\in \mathcal{N}_{x}}\mathfrak{m}_{xy}=\mathfrak{m}(x).
\end{align*}
By $\Lambda \in [2,\infty)$,
we see $\mathfrak{m}(S_{1}(x))/\mathfrak{m}(x)\leq \Lambda/(2\mathcal{M})$.
Thus we complete the proof.
\end{proof}

One can also conclude the following results by using Theorem \ref{thm:volume comparison} along the line of the proof of Theorem 1 in \cite{P}.
\begin{cor}
Let $x\in V$.
For $K\in \mathbb{R}$
we assume $\inf_{y\in V \setminus \{x\}}\kappa(x,y) \geq K$.
For $\Lambda \in [2,\infty)$
we further assume $\sup_{y\in V\setminus \{x\}}\mathcal{H}(x,y)\leq \Lambda$.
Then for every $R\geq 1$ with $(R-1)K\leq \Lambda$,
\begin{equation*}
\mathfrak{m}(S_{R}(x))\leq \mathfrak{m}(x) \prod^{R-1}_{i=0}  \left(\frac{\Lambda-i\,K}{2\,\mathcal{M}}\right).
\end{equation*}
\end{cor}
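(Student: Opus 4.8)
The plan is to obtain the stated bound by telescoping the one-step ratio estimate of Theorem~\ref{thm:volume comparison}; the entire analytic content already resides in that theorem, so what remains is a purely formal iteration together with some bookkeeping on signs.

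First I would dispose of the degenerate case. Each factor $\frac{\Lambda-iK}{2\mathcal{M}}$ appearing on the right-hand side is nonnegative: we have $\mathcal{M}>0$ because for $z\in\mathcal{N}_{y}$ one has $z\to y$ or $y\to z$, so either $P(z,y)>0$ or $\olarrow{P}(z,y)>0$, whence $\mathcal{P}(z,y)>0$, and the infimum over the finite set $V$ stays positive; moreover $\Lambda-iK\ge 0$ for $i=0,\dots,R-1$ (see below). Hence the right-hand side of the asserted inequality is nonnegative, and if $S_{R}(x)=\emptyset$ the claim is trivial since then $\mathfrak{m}(S_{R}(x))=0$. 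So I may assume $S_{R}(x)\neq\emptyset$. Choosing $y$ with $\rho_{x}(y)=R$ and a shortest directed path from $x$ to $y$, its successive vertices realize every distance $0,1,\dots,R$, so $S_{i}(x)\neq\emptyset$ and $\mathfrak{m}(S_{i}(x))>0$ for all $0\le i\le R$.

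Next I would verify that Theorem~\ref{thm:volume comparison} applies at every level $i=0,\dots,R-1$, that is, that $iK\le\Lambda$. If $K\ge 0$, then $iK\le (R-1)K\le\Lambda$, the last step being the hypothesis; if $K<0$, then $iK\le 0<\Lambda$ since $\Lambda\ge 2$. In either case the same dichotomy gives $\Lambda-iK\ge 0$, confirming the claim used above. Applying Theorem~\ref{thm:volume comparison} at each such level yields
\begin{equation*}
\frac{\mathfrak{m}(S_{i+1}(x))}{\mathfrak{m}(S_{i}(x))}\le \frac{\Lambda-iK}{2\mathcal{M}}\qquad(i=0,\dots,R-1).
\end{equation*}

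Finally, since $S_{0}(x)=\{x\}$ and hence $\mathfrak{m}(S_{0}(x))=\mathfrak{m}(x)$, I would telescope the positive ratios:
\begin{equation*}
\mathfrak{m}(S_{R}(x))=\mathfrak{m}(x)\prod_{i=0}^{R-1}\frac{\mathfrak{m}(S_{i+1}(x))}{\mathfrak{m}(S_{i}(x))}\le \mathfrak{m}(x)\prod_{i=0}^{R-1}\frac{\Lambda-iK}{2\mathcal{M}},
\end{equation*}
where the inequality is justified because each ratio is strictly positive and bounded above by the nonnegative factor $\frac{\Lambda-iK}{2\mathcal{M}}$, so the product of the ratios does not exceed the product of the factors. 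This is exactly the assertion. There is no genuinely hard step; the only points demanding attention are the level-by-level verification of $iK\le\Lambda$ (where the sign of $K$ matters) and the nonnegativity of the factors, which is precisely what legitimizes multiplying the one-step inequalities.
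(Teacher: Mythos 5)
Your proof is correct and is exactly the argument the paper intends: the paper gives no details, stating only that the corollary follows ``by using Theorem \ref{thm:volume comparison} along the line of the proof of Theorem 1 in \cite{P},'' i.e.\ by iterating the one-step sphere-ratio bound, which is precisely your telescoping. Your additional bookkeeping (positivity of $\mathcal{M}$, non-emptiness of the intermediate spheres $S_{i}(x)$ via a shortest directed path, and the sign check $iK\leq\Lambda$ for each $i$) correctly fills in the details the paper leaves implicit.
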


\begin{cor}\label{cor:Bishop}
Let $x\in V$.
For $K\in \mathbb{R}$
we assume $\inf_{y\in V \setminus \{x\}}\kappa(x,y) \geq K$.
For $\Lambda \in [2,\infty)$
we further assume $\sup_{y\in V\setminus \{x\}}\mathcal{H}(x,y)\leq \Lambda$.
Then for every $R\geq 1$ with $(R-1)K\leq \Lambda$,
\begin{equation*}
\mathfrak{m}(B_{R}(x))\leq \mathfrak{m}(x)\left(1+\sum^{R}_{j=1} \prod^{j-1}_{i=0}  \left(  \frac{\Lambda-i\,K}{2\,\mathcal{M}} \right)  \right).
\end{equation*}
\end{cor}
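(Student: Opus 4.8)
The plan is to reduce this ball estimate to the sphere estimate established in the preceding corollary by writing the metric ball as a (disjoint) union of metric spheres. Since the distance $d$ takes values in the non-negative integers, the function $\rho_{x}$ is integer-valued, so that $B_{R}(x)=\bigcup_{j=0}^{R}S_{j}(x)$ with the spheres $S_{j}(x)$ pairwise disjoint; consequently $\mathfrak{m}(B_{R}(x))=\sum_{j=0}^{R}\mathfrak{m}(S_{j}(x))$. I would begin by isolating the $j=0$ term: since $S_{0}(x)=\{x\}$, we have $\mathfrak{m}(S_{0}(x))=\mathfrak{m}(x)$, which will produce the leading $1$ inside the parentheses once $\mathfrak{m}(x)$ is factored out.

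Next, for each $j\in\{1,\dots,R\}$ I would apply the preceding corollary to the sphere $S_{j}(x)$, which yields $\mathfrak{m}(S_{j}(x))\leq \mathfrak{m}(x)\prod_{i=0}^{j-1}\left(\frac{\Lambda-i\,K}{2\,\mathcal{M}}\right)$. Before summing, one must check that the hypothesis $(j-1)K\leq \Lambda$ of that corollary holds for every such $j$, not merely for $j=R$. If $K\geq 0$, this follows from $(j-1)K\leq (R-1)K\leq \Lambda$; if $K<0$, then $(j-1)K\leq 0<2\leq \Lambda$ since $j\geq 1$ and $\Lambda\in[2,\infty)$. Thus the sphere bound is available for all $j\leq R$.

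Finally, I would sum the sphere estimates over $j=0,\dots,R$ and factor out $\mathfrak{m}(x)$ to arrive at exactly the claimed inequality. The argument is essentially bookkeeping: the only point requiring care is the verification, just described, that the range condition $(j-1)K\leq\Lambda$ propagates from $j=R$ down to all intermediate radii, which is precisely where the sign of $K$ and the standing assumption $\Lambda\geq 2$ come into play. I do not expect any genuine analytic difficulty, since all the geometric content is already contained in the preceding corollary (and ultimately in Theorem \ref{thm:volume comparison} and Lemma \ref{lem:vol3}).
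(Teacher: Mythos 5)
Your proposal is correct and follows essentially the same route as the paper, which obtains this corollary by summing the sphere estimates of the preceding corollary over the disjoint spheres $S_{j}(x)$, $0\leq j\leq R$, that partition $B_{R}(x)$ (this is what the paper means by arguing ``along the line of the proof of Theorem 1 in \cite{P}''). Your explicit check that the range condition $(j-1)K\leq \Lambda$ propagates to every $j\leq R$ (splitting on the sign of $K$ and using $\Lambda\geq 2$) is a detail the paper leaves implicit, and it is correctly handled.
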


Corollary \ref{cor:Bishop} can be viewed as an analogue of Bishop (or rather Heintze-Karcher) volume comparison theorem on Riemannian manifold under a lower Ricci curvature bound.

In the undirected case,
there is a further work on volume comparisons (see \cite{BRT}).

\subsection{Laplacian comparisons}\label{sec:Laplacian comparisons}
We are now in a position to give a proof of Theorem \ref{thm:Laplacian comparison}.
\begin{proof}[Proof of Theorem \ref{thm:Laplacian comparison}]
Let $x\in V$.
For $K\in \mathbb{R}$
we assume $\inf_{y\in V\setminus \{x\}}\kappa(x,y) \geq K$.
For $\Lambda \in (-\infty,-1]$
we further assume $\mathcal{H}_{x}\geq \Lambda$.
The distance function $\rho_{x}$ satisfies $\nabla_{xy}\rho_{x}=1$ for the gradient operator $\nabla_{xy}$ defined as (\ref{eq:gradient operator}).
Therefore,
due to Theorem \ref{thm:limit free formula},
\begin{equation*}
K \leq \kappa(x,y) \leq \nabla_{xy} \mathcal{L} \rho_{x} = \frac{\mathcal{L} \rho_{x}(y)- \mathcal{L} \rho_{x}(x)}{d(x,y)}\leq \frac{\mathcal{L} \rho_{x}(y)- \Lambda}{d(x,y)}
\end{equation*}
for every $y\in V\setminus \{x\}$,
and hence (\ref{eq:Laplacian comparison}).
Thus we complete the proof of Theorem \ref{thm:Laplacian comparison}.
\end{proof}

In the rest of this subsection,
we compare Theorem \ref{thm:Laplacian comparison} with a similar result in smooth setting.
As already mentioned in Subsection \ref{sec:Main results and organization},
on manifolds with a lower Ricci curvature bound,
it is well-known that
several comparison geometric results hold for hypersurfaces with a mean curvature bound.
We now compare Theorem \ref{thm:Laplacian comparison} with a Laplacian comparison on weighted manifolds with boundary under a lower Ricci curvature bound,
and a lower mean curvature bound for the boundary obtained in \cite{S}.
We find a similarity between them.

Let $(M,g,\phi)$ be a weighted Riemannian manifold with boundary with weighted measure
\begin{equation*}
\mathfrak{m}_{\phi}:=e^{-\phi}v_{g},
\end{equation*}
where $v_{g}$ is the Riemannian volume measure.
The \textit{weighted Laplacian} is defined as
\begin{equation*}
\mathcal{L}_{\phi}:=\mathcal{L}_{g}+g(\nabla \phi,\nabla \cdot),
\end{equation*}
here $\nabla$ is the gradient.
The \textit{weighted Ricci curvature} is defined as follows (\cite{BE}, \cite{L}):
\begin{equation*}
\ric_{\phi}:=\ric_{g}+\Hess \phi,
\end{equation*}
where $\ric_{g}$ is the Ricci curvature determined by $g$,
and $\Hess$ is the Hessian.
Let $\ric_{\phi,M}$ be its infimum over the unit tangent bundle.
Let $\inte M$ and $\bm$ stand for the interior and boundary of $M$,
respectively.
Let $\rho_{\bm}:M\to \mathbb{R}$ denote the distance function from $\bm$ defined as $\rho_{\bm}:=d_{g}(\bm,\cdot)$,
which is smooth on $\inte M \setminus \cut \bm$.
Here $\cut \bm$ is the cut locus for the boundary (for its precise definition, see e.g., Subsection 2.3 in \cite{S1}).
For $z\in \bm$,
the \textit{weighted mean curvature} of $\bm$ at $z$ is defined as
\begin{equation*}
\mathcal{H}_{\phi,z}:=\mathcal{H}_{g,z}+g(\nabla \phi,u_{z}),
\end{equation*}
where $\mathcal{H}_{g,z}$ is the (inward) mean curvature induced from $g$,
and $u_{z}$ is the unit inner normal vector on $\bm$ at $z$.
Set $\mathcal{H}_{\phi,\bm}:=\inf_{z\in \bm} \mathcal{H}_{\phi,z}$.

The second author \cite{S} has shown the following Laplacian comparison inequality under a similar lower curvature bound to that of Theorem \ref{thm:Laplacian comparison} (see Lemma 6.1 in \cite{S}):
\begin{lem}[\cite{S}]\label{lem:Laplacian comparison with boundary}
For $K\in \mathbb{R}$
we assume $\ric_{\phi,M}\geq K$.
For $\Lambda \in \mathbb{R}$
we further assume $\mathcal{H}_{\phi,\bm}\geq \Lambda$.
Then on $\inte M \setminus \cut \bm$,
we have
\begin{equation}\label{eq:Laplacian comparison with boundary}
\mathcal{L}_{\phi} \rho_{\bm}\geq K \rho_{\bm}+\Lambda.
\end{equation}
\end{lem}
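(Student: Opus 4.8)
The plan is to reduce the inequality to a one-dimensional Riccati-type monotonicity along the unit-speed geodesics issuing orthogonally from $\bm$, which is the classical route for Laplacian comparisons of the distance function from a hypersurface. First I would fix $p\in \inte M\setminus \cut\bm$. Since $p$ is not a cut point of $\bm$, there is a unique unit-speed minimizing geodesic $\gamma\colon[0,r]\to M$ with $r:=\rho_{\bm}(p)$, $\gamma(r)=p$, meeting $\bm$ orthogonally at $z:=\gamma(0)$ with initial velocity $\gamma'(0)=u_{z}$; moreover $\rho_{\bm}$ is smooth in a neighborhood of $\gamma([0,r])$, with $|\nabla \rho_{\bm}|\equiv 1$, $\nabla\rho_{\bm}(\gamma(t))=\gamma'(t)$ and $\rho_{\bm}(\gamma(t))=t$ there. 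It then suffices to prove $\mathcal{L}_{\phi}\rho_{\bm}(\gamma(t))\geq Kt+\Lambda$ for all $t\in[0,r]$, and the assertion is recovered at $t=r$.

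Next I would set $h(t):=\mathcal{L}_{\phi}\rho_{\bm}(\gamma(t))=-\Delta_{\phi}\rho_{\bm}(\gamma(t))$, where $\Delta_{\phi}:=-\mathcal{L}_{\phi}$ is the drift Laplacian associated with $\mathfrak{m}_{\phi}$, and differentiate $h$ using the weighted Bochner formula of Bakry-\'Emery type. Applying it to $u:=\rho_{\bm}$ and using $|\nabla u|\equiv 1$ gives
\begin{equation*}
0=\frac{1}{2}\Delta_{\phi}|\nabla u|^{2}=|\Hess u|^{2}+g(\nabla u,\nabla \Delta_{\phi}u)+\ric_{\phi}(\nabla u,\nabla u),
\end{equation*}
where $\ric_{\phi}=\ric_{g}+\Hess\phi$ is the $N=\infty$ curvature, so that no dimensional correction term appears. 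Along $\gamma$ one has $g(\nabla u,\nabla\Delta_{\phi}u)=\frac{d}{dt}\Delta_{\phi}u(\gamma(t))=-h'(t)$ and $\ric_{\phi}(\gamma'(t),\gamma'(t))\geq \ric_{\phi,M}\geq K$, so the identity rearranges to
\begin{equation*}
h'(t)=|\Hess u|^{2}+\ric_{\phi}(\gamma'(t),\gamma'(t))\geq K.
\end{equation*}
For the initial value I would use the boundary geometry: since $\Hess\rho_{\bm}$ vanishes in the radial direction and restricts on $T_{z}\bm$ to minus the second fundamental form of $\bm$ with respect to the inner normal, its trace at $z$ equals $-\mathcal{H}_{g,z}$, whence $\mathcal{L}_{g}\rho_{\bm}(z)=\mathcal{H}_{g,z}$. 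Adding the drift term then gives $h(0)=\mathcal{L}_{\phi}\rho_{\bm}(z)=\mathcal{H}_{g,z}+g(\nabla\phi,u_{z})=\mathcal{H}_{\phi,z}\geq \mathcal{H}_{\phi,\bm}\geq \Lambda$.

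Finally I would integrate the differential inequality: $h(t)\geq h(0)+Kt\geq \Lambda+Kt$ for $t\in[0,r]$, which at $t=r$ reads $\mathcal{L}_{\phi}\rho_{\bm}(p)\geq K\rho_{\bm}(p)+\Lambda$, proving \eqref{eq:Laplacian comparison with boundary}. The main obstacle is not the integration itself but fixing the two ingredients with the correct signs and regularity: deriving the weighted Bochner identity so that the nonnegative term $|\Hess u|^{2}$ may simply be discarded, and pinning down the boundary identity $\mathcal{L}_{\phi}\rho_{\bm}=\mathcal{H}_{\phi,\cdot}$ on $\bm$ in the paper's convention for the inward mean curvature. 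One must also confine the whole computation to $\inte M\setminus\cut\bm$, where $\rho_{\bm}$ is genuinely smooth, so that $h$ is differentiable along $\gamma$ and the foot point $z$ lies in the smooth region.
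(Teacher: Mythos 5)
Your proof is correct, and you should note that the paper itself gives no proof of Lemma \ref{lem:Laplacian comparison with boundary}: it is quoted from Lemma 6.1 in \cite{S}, where the argument is exactly the Bochner--Riccati scheme along normal geodesics that you carry out. The only delicate point is the sign bookkeeping, and yours checks out against the paper's conventions: with $\mathcal{L}_{g}=-\tr\Hess$ and the inward mean curvature normalized as in Remark \ref{rem:formulation of asymptotic mean curvature}, one indeed has $\mathcal{L}_{\phi}\rho_{\bm}=\mathcal{H}_{\phi,\cdot}$ on $\bm$ (so $h(0)\geq\Lambda$), the weighted Bochner identity with $\vert\nabla\rho_{\bm}\vert\equiv 1$ gives $h'(t)=\vert\Hess\rho_{\bm}\vert^{2}+\ric_{\phi}(\gamma',\gamma')\geq K$ after discarding the nonnegative Hessian term (legitimate here since no dimensional bound is claimed), and integrating on $[0,r]$, which is permissible because $\rho_{\bm}$ is smooth in a neighborhood of the whole minimizing segment when its endpoint avoids $\cut\bm$, yields the stated linear comparison.
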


One can observe that
the form of our Laplacian comparison inequality (\ref{eq:Laplacian comparison}) is same as that of (\ref{eq:Laplacian comparison with boundary}).
The second author \cite{S} derived a relative volume comparison of Heintze-Karcher type from Lemma \ref{lem:Laplacian comparison with boundary} (see Theorem 6.3 in \cite{S}, and cf. \cite{HK}, Theorem 2 in \cite{Mo}).

\section{Dirichlet eigenvalues of $p$-Laplacian}\label{sec:Dirichlet eigenvalues of p-Laplacian}

Let $\mathcal{V}$ denote a non-empty subset of $V$ with $\mathcal{V} \neq V$.
The purpose of this last section is to establish a lower bound of the Dirichlet eigenvalues of the $p$-Laplacian on $\mathcal{V}$ under our lower curvature bounds as an application of the study in Subsection \ref{sec:Laplacian comparisons}.

\subsection{Dirichlet $p$-Poincar\'e constants}\label{sec:Dirichlet p-Poincare constants}
Let $p\in (1,\infty)$.
For a non-zero function $f:V\to \mathbb{R}$,
its \textit{$p$-Rayleigh quotient} is defined by
\begin{equation*}
\mathcal{R}_{p}(f):=\frac{1}{2}\frac{\sum_{x,y\in V} \vert f(y)-f(x) \vert^{p}\, \mathfrak{m}_{xy}}{\sum_{x\in V}\vert f(x)\vert^{p}\mathfrak{m}(x)}.
\end{equation*}
We define the \textit{Dirichlet $p$-Poincar\'e constant over $\mathcal{V}$} by
\begin{equation*}
\lambda^{D}_{p}(\mathcal{V}):=\inf_{f\in \mathcal{F}_{\mathcal{V}}\setminus \{0\}} \mathcal{R}_{p}(f),
\end{equation*}
where $\mathcal{F}_{\mathcal{V}}$ denotes the set of all function $f:V\to \mathbb{R}$ with $f|_{V \setminus \mathcal{V}}=0$.

We briefly mention the relation between the Dirichlet $p$-Poincar\'e constant and the Dirichlet eigenvalues of $p$-Laplacian (cf. \cite{GHJ}, \cite{HW}).
The \textit{$p$-Laplacian $\mathcal{L}_{p}:\mathcal{F}\to \mathcal{F}$} is defined by
\begin{equation*}
\mathcal{L}_{p}f(x):=\sum_{y\in V} \vert f(x)-f(y) \vert^{p-2} (f(x)-f(y))\mathcal{P}(x,y).
\end{equation*}
The $2$-Laplacian $\mathcal{L}_{2}$ coincides with the Chung Laplacian $\mathcal{L}$.
A real number $\lambda$ is said to be a \textit{Dirichlet eigenvalue} of $\mathcal{L}_{p}$ on $\mathcal{V}$ if
there is a non-zero function $f\in \mathcal{F}_{\mathcal{V}}$ such that
\begin{equation*}
\mathcal{L}_{p}f=\lambda \vert f\vert^{p-2} \, f.
\end{equation*}
The smallest Dirichlet eigenvalue of the $p$-Laplacian $\mathcal{L}_{p}$ on $\mathcal{V}$ can be variationally characterized as $\lambda^{D}_{p}(\mathcal{V})$.

\subsection{Cheeger inequalities}\label{sec:Cheeger inequalities}
We first formulate an inequality of Cheeger type in our setting to derive a lower bound of the Dirichlet $p$-Poincar\'e constant.
We will refer to the argument of the proof of Theorem 4.8 in \cite{G}, and Theorem 3.5 in \cite{KM}.
We introduce the Dirichlet isoperimetric constant for $\mathcal{V}$.
For a non-empty $\Omega \subset V$,
its \textit{boundary measure} is defined as 
\begin{equation*}
\mathfrak{m}(\partial \Omega):=\sum_{y\in \Omega}\sum_{z\in V\setminus \Omega} \mathfrak{m}_{yz}.
\end{equation*}
We define the \textit{Dirichlet isoperimetric constant on $\mathcal{V}$} by
\begin{equation*}
\mathcal{I}^{D}_{\mathcal{V}}:=\inf_{\Omega}\frac{\mathfrak{m}(\partial \Omega)}{\mathfrak{m}(\Omega)},
\end{equation*}
where $\mathfrak{m}(\Omega)$ is defined as (\ref{eq:measure}),
and the infimum is taken over all non-empty subsets $\Omega \subset \mathcal{V}$.

For $f:V\to \mathbb{R}$ and $t\in \mathbb{R}$,
we set
\begin{equation*}
\Omega_{f,t}:=\{x\in V \mid f(x)> t\}.
\end{equation*}
We present the following co-area formula (cf. Lemma 3.4 in \cite{G}):
\begin{lem}\label{lem:co-area formula}
For every $f:V\to \mathbb{R}$
we have
\begin{equation*}
\int^{\infty}_{-\infty}\,\mathfrak{m}(\partial \Omega_{f,t})\,dt=\frac{1}{2}\sum_{y,z\in V} \vert f(y)-f(z) \vert\, \mathfrak{m}_{yz}.
\end{equation*}
\end{lem}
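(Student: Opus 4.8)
The plan is to prove the co-area formula by the standard ``layer-cake'' approach: express each side in terms of an integral over the threshold parameter $t$ and show they agree edge-by-edge. First I would observe that the boundary measure $\mathfrak{m}(\partial \Omega_{f,t})$ counts, with weights $\mathfrak{m}_{yz}$, exactly those edges $\{y,z\}$ that are \emph{cut} by the superlevel set $\Omega_{f,t}$, that is, pairs with $y\in \Omega_{f,t}$ and $z\notin \Omega_{f,t}$. Concretely, writing out the definition,
\begin{equation*}
\mathfrak{m}(\partial \Omega_{f,t})=\sum_{y,z\in V} \mathbf{1}_{\{f(y)>t\}}\,\mathbf{1}_{\{f(z)\leq t\}}\,\mathfrak{m}_{yz}.
\end{equation*}

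Next I would integrate this expression over $t\in(-\infty,\infty)$ and interchange the (finite) sum over $y,z$ with the integral, which is justified since $V$ is finite. For each ordered pair $(y,z)$ the integrand $\mathbf{1}_{\{f(z)\leq t<f(y)\}}$ is nonzero precisely on the interval where $f(z)\leq t<f(y)$, so
\begin{equation*}
\int^{\infty}_{-\infty}\mathbf{1}_{\{f(y)>t\}}\,\mathbf{1}_{\{f(z)\leq t\}}\,dt=\left(f(y)-f(z)\right)_{+},
\end{equation*}
the positive part. Summing against $\mathfrak{m}_{yz}$ gives $\sum_{y,z}(f(y)-f(z))_{+}\,\mathfrak{m}_{yz}$.

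The final step is a symmetrization: using the symmetry $\mathfrak{m}_{yz}=\mathfrak{m}_{zy}$ established in Subsection~\ref{sec:Laplacians}, I would swap the roles of $y$ and $z$ in half of the sum to convert $(f(y)-f(z))_{+}$ into the full absolute value. Explicitly, since $(a)_{+}+(-a)_{+}=\vert a\vert$, averaging the sum with its $(y,z)\leftrightarrow(z,y)$ reflection yields
\begin{equation*}
\sum_{y,z\in V}\left(f(y)-f(z)\right)_{+}\mathfrak{m}_{yz}=\frac{1}{2}\sum_{y,z\in V}\vert f(y)-f(z)\vert\,\mathfrak{m}_{yz},
\end{equation*}
which is exactly the right-hand side. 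This completes the identification of the two sides.

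I do not anticipate a serious obstacle here, as the statement is essentially a bookkeeping identity; the only point requiring care is the Fubini-type interchange of the integral and the sum, which is immediate because $V$ is finite and each integral $\int_{-\infty}^{\infty}\mathbf{1}_{\{f(z)\leq t<f(y)\}}\,dt$ is finite, together with the correct handling of the strict versus non-strict inequalities in the definition of $\Omega_{f,t}$ so that the indicator product integrates precisely to the positive part $(f(y)-f(z))_{+}$. The symmetry of $\mathfrak{m}_{yz}$ is the key structural input that turns the one-sided count into the symmetric absolute-value sum.
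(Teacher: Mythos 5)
Your proof is correct and follows essentially the same route as the paper: both write $\mathfrak{m}(\partial \Omega_{f,t})$ as a weighted sum of indicators of cut edges, integrate in $t$ to obtain $\sum (f(y)-f(z))_{+}\,\mathfrak{m}_{yz}$ (the paper phrases this as a sum over ordered pairs with $f(z)<f(y)$), and then use the symmetry $\mathfrak{m}_{yz}=\mathfrak{m}_{zy}$ to convert the one-sided sum into half the absolute-value sum. Your treatment of the strict/non-strict inequalities and the Fubini interchange matches the paper's argument in all essentials.
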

\begin{proof}
For an interval $I\subset \mathbb{R}$,
let $\mathbf{1}_{I}$ denote its indicator function.
For each $t\in \mathbb{R}$
we see that $\mathfrak{m}(\partial \Omega_{f,t})$ is equal to
\begin{equation*}
\sum_{y\in \Omega_{f,t}} \sum_{z\in V\setminus \Omega_{f,t}}\mathfrak{m}_{yz}=\sum_{f(z)<f(y)}\,\mathbf{1}_{[f(z),f(y))}(t)\,\mathfrak{m}_{yz},
\end{equation*}
where the summation in the right hand is taken over all ordered pairs $(y,z)\in V\times V$ with $f(z)<f(y)$.
Integrating the above equality with respect to $t$ over $(-\infty,\infty)$,
we deduce
\begin{align*}
\int^{\infty}_{-\infty}\,\mathfrak{m}(\partial \Omega_{f,t})\,dt
&=\sum_{f(z)<f(y)} \int^{\infty}_{-\infty} \mathbf{1}_{[f(z),f(y))}(t)\mathfrak{m}_{yz}\\
&=\sum_{f(z)<f(y)}( f(y)-f(z))\, \mathfrak{m}_{yz}=\frac{1}{2}\sum_{y,z\in V} \vert f(y)-f(z) \vert\,\mathfrak{m}_{yz}.
\end{align*}
We obtain the desired equality.
\end{proof}

Lemma \ref{lem:co-area formula} leads us to the following (cf. Lemma 4.9 in \cite{G}):
\begin{lem}\label{lem:implication of co-area formula}
For every non-negative function $f\in \mathcal{F}_{\mathcal{V}}$,
\begin{equation*}
\frac{1}{2}\sum_{y,z\in V} \vert f(y)-f(z) \vert \mathfrak{m}_{yz}\geq \mathcal{I}^{D}_{\mathcal{V}} \sum_{x\in V} f(x)\mathfrak{m}(x).
\end{equation*}
\end{lem}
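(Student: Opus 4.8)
The plan is to combine the co-area formula of Lemma \ref{lem:co-area formula} with the definition of the Dirichlet isoperimetric constant $\mathcal{I}^{D}_{\mathcal{V}}$, applied to the superlevel sets $\Omega_{f,t}$, and then to recover the right hand side by a layer-cake (distribution-function) identity. Throughout, let $f\in\mathcal{F}_{\mathcal{V}}$ be the given non-negative function.

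First I would rewrite the left hand side using Lemma \ref{lem:co-area formula}:
\[
\frac{1}{2}\sum_{y,z\in V}\vert f(y)-f(z)\vert\,\mathfrak{m}_{yz}=\int_{-\infty}^{\infty}\mathfrak{m}(\partial\Omega_{f,t})\,dt.
\]
Since $f\geq 0$, for every $t<0$ one has $\Omega_{f,t}=V$, so that $\mathfrak{m}(\partial\Omega_{f,t})=0$; hence the integral reduces to $\int_{0}^{\infty}\mathfrak{m}(\partial\Omega_{f,t})\,dt$.

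Next I would check that each relevant superlevel set is admissible for $\mathcal{I}^{D}_{\mathcal{V}}$. For $t\geq 0$ and $x\in V\setminus\mathcal{V}$ we have $f(x)=0$ because $f\in\mathcal{F}_{\mathcal{V}}$, so the condition $f(x)>t$ fails; thus $\Omega_{f,t}\subset\mathcal{V}$. Consequently, when $\Omega_{f,t}$ is non-empty the definition of $\mathcal{I}^{D}_{\mathcal{V}}$ gives $\mathfrak{m}(\partial\Omega_{f,t})\geq\mathcal{I}^{D}_{\mathcal{V}}\,\mathfrak{m}(\Omega_{f,t})$, while if $\Omega_{f,t}$ is empty both sides vanish; so this inequality holds for all $t\geq 0$. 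Integrating over $[0,\infty)$ yields
\[
\int_{0}^{\infty}\mathfrak{m}(\partial\Omega_{f,t})\,dt\geq\mathcal{I}^{D}_{\mathcal{V}}\int_{0}^{\infty}\mathfrak{m}(\Omega_{f,t})\,dt.
\]

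Finally I would evaluate the remaining integral by the layer-cake identity: writing $\mathfrak{m}(\Omega_{f,t})=\sum_{x\in V}\mathbf{1}_{(t,\infty)}(f(x))\,\mathfrak{m}(x)$ and using $\int_{0}^{\infty}\mathbf{1}_{(t,\infty)}(f(x))\,dt=f(x)$, valid because $f(x)\geq 0$, an interchange of sum and integral gives $\int_{0}^{\infty}\mathfrak{m}(\Omega_{f,t})\,dt=\sum_{x\in V}f(x)\mathfrak{m}(x)$. Combining the three displays proves the claim. The only genuinely delicate point is the reduction to $t\geq 0$ together with the verification $\Omega_{f,t}\subset\mathcal{V}$; this is exactly where the hypotheses $f\geq 0$ and $f|_{V\setminus\mathcal{V}}=0$ are needed, and the remainder is routine bookkeeping.
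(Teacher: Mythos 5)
Your proposal is correct and follows essentially the same route as the paper's proof: apply the co-area formula of Lemma \ref{lem:co-area formula}, use $f\geq 0$ and $f|_{V\setminus\mathcal{V}}=0$ to reduce to $t\geq 0$ and to place $\Omega_{f,t}$ inside $\mathcal{V}$, invoke the definition of $\mathcal{I}^{D}_{\mathcal{V}}$ pointwise in $t$, and finish with the layer-cake identity $\int_{0}^{\infty}\mathfrak{m}(\Omega_{f,t})\,dt=\sum_{x\in V}f(x)\mathfrak{m}(x)$. If anything, you are slightly more careful than the paper, which leaves implicit both the vanishing of $\mathfrak{m}(\partial\Omega_{f,t})$ for $t<0$ and the trivial case where $\Omega_{f,t}$ is empty.
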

\begin{proof}
Since $f\in \mathcal{F}_{\mathcal{V}}$,
the set $\Omega_{f,t}$ is contained in $\mathcal{V}$ for every $t\geq 0$,
and hence $\mathfrak{m}(\partial \Omega_{f,t})\geq \mathcal{I}^{D}_{\mathcal{V}}\,\mathfrak{m}(\Omega_{f,t})$.
Lemma \ref{lem:co-area formula} implies
\begin{align*}
\frac{1}{2}\sum_{y,z\in V} \vert f(y)-f(x) \vert \mathfrak{m}_{yz}
&= \int^{\infty}_{0}\,\mathfrak{m}(\partial \Omega_{f,t})\,dt\geq \mathcal{I}^{D}_{\mathcal{V}}\,\int^{\infty}_{0}\,\mathfrak{m}(\Omega_{f,t})\,dt\\
&=\mathcal{I}^{D}_{\mathcal{V}}\,\sum_{x\in V}\mathfrak{m}(x)\int^{\infty}_{0}\mathbf{1}_{[0,f(x))}(t)\,dt 
=\mathcal{I}^{D}_{\mathcal{V}}\,\sum_{x\in V}f(x)\mathfrak{m}(x).
\end{align*}
This proves the lemma.
\end{proof}

We recall the following inequality that has been obtained by Amghibech \cite{Am} (see \cite{Am}, and see also Lemma 3.8 in \cite{KM}):
\begin{lem}[\cite{Am}, \cite{KM}]\label{lem:p-convexity}
Let $p\in (1,\infty)$.
Then for every non-negative function $f:V\to \mathbb{R}$,
and for all $x,y\in V$
we have
\begin{equation*}
\vert f(y)^{p}-f(x)^{p} \vert \leq p\,\vert f(y)-f(x) \vert\, \left(  \frac{f(y)^{p}+f(x)^{p}}{2}  \right)^{1/q},
\end{equation*}
where $q$ is determined by $p^{-1}+q^{-1}=1$.
\end{lem}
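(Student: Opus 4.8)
The plan is to reduce the claim to a one-variable statement about the convex function $\psi(t)=t^{p}$ and then combine two classical integral inequalities. Since $f$ is non-negative, write $a:=f(x)$ and $b:=f(y)$, so $a,b\ge 0$. Both sides of the asserted inequality are symmetric in $a,b$ and vanish when $a=b$, so it suffices to treat $0\le a<b$ and prove
\[
b^{p}-a^{p}\le p(b-a)\left(\frac{a^{p}+b^{p}}{2}\right)^{(p-1)/p},
\]
recalling that $1/q=(p-1)/p$.

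The starting point is the integral representation $b^{p}-a^{p}=\int_{a}^{b} p\,t^{p-1}\,dt=p(b-a)\,\bar{h}$, where $\bar{h}$ denotes the average of $h(t)=t^{p-1}$ over $[a,b]$ against the uniform probability measure $\mu$ on $[a,b]$. Thus the task reduces to the estimate $\bar{h}\le\left(\tfrac{a^{p}+b^{p}}{2}\right)^{(p-1)/p}$. First I would apply Jensen's inequality to the concave map $r(s)=s^{(p-1)/p}$ (concave because $0<(p-1)/p<1$, using $p>1$): since $t^{p-1}=r(t^{p})$,
\[
\bar{h}=\int r(t^{p})\,d\mu\le r\!\left(\int t^{p}\,d\mu\right)=\left(\frac{1}{b-a}\int_{a}^{b} t^{p}\,dt\right)^{(p-1)/p}.
\]
Next I would bound the average of $t^{p}$ from above by the Hermite--Hadamard inequality: because $\psi(t)=t^{p}$ is convex, its trapezoidal overestimate gives $\frac{1}{b-a}\int_{a}^{b} t^{p}\,dt\le\frac{a^{p}+b^{p}}{2}$. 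Since $r$ is increasing, combining the two displays yields $\bar{h}\le\left(\tfrac{a^{p}+b^{p}}{2}\right)^{(p-1)/p}$, and multiplying through by $p(b-a)$ gives the desired bound; restoring absolute values then covers $b<a$ by symmetry.

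I do not expect a serious obstacle here, since the argument is short. The only points requiring care are the directions of the two inequalities (Jensen reverses for the concave $r$, Hermite--Hadamard supplies precisely the upper endpoint-average bound, and the outer exponent is positive so the monotone map $r$ preserves the intermediate inequality) and the trivial degenerate case $a=b$. An alternative, purely algebraic route would use the homogeneity of degree $p-1$ to set $s=a/b\in[0,1)$ and then verify $g(s):=p(1-s)\bigl((1+s^{p})/2\bigr)^{(p-1)/p}-(1-s^{p})\ge 0$ through a derivative analysis on $[0,1]$; this works but is computationally heavier, so I would prefer the integral approach above.
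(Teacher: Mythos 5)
Your argument is correct. Note first that the paper itself gives no proof of this lemma: it is recalled with citations to Amghibech and to Lemma 3.8 of Keller--Mugnolo, so there is no in-paper argument to compare against; what you have produced is a valid self-contained proof of the cited fact. Each step checks out: the reduction by symmetry to $0\le a<b$, the representation $b^{p}-a^{p}=p(b-a)\bar{h}$ with $\bar{h}$ the uniform average of $t^{p-1}$ on $[a,b]$, Jensen's inequality in the correct (reversed) direction for the concave map $s\mapsto s^{(p-1)/p}$, the Hermite--Hadamard upper bound for the convex map $t\mapsto t^{p}$, and the monotonicity of $s\mapsto s^{(p-1)/p}$ that lets you chain the two estimates. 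A sanity check at $p=2$ reduces the whole chain to $2ab\le a^{2}+b^{2}$, as expected. One small simplification you might prefer: the Hermite--Hadamard step can be replaced by the pointwise convexity bound
\begin{equation*}
\bigl((1-s)a+sb\bigr)^{p}\le (1-s)\,a^{p}+s\,b^{p},\qquad s\in[0,1],
\end{equation*}
integrated over $[0,1]$ after the substitution $t=(1-s)a+sb$; this yields exactly $\frac{1}{b-a}\int_{a}^{b}t^{p}\,dt\le\frac{a^{p}+b^{p}}{2}$ without invoking a named inequality, and it is essentially how the cited sources argue. Your alternative algebraic route via homogeneity would also work but, as you say, is heavier; there is no reason to prefer it.
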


Summarizing the above lemmas,
we conclude the following inequality of Cheeger type (cf. Theorem 4.8 in \cite{G}):
\begin{prop}\label{prop:Dirichlet Cheeger inequality}
For $p\in (1,\infty)$ we have
\begin{equation*}
\lambda^{D}_{p}(\mathcal{V})\geq \frac{2^{p-1}}{p^{p}} (\mathcal{I}^{D}_{\mathcal{V}})^{p}.
\end{equation*}
\end{prop}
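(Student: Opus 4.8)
The plan is to bound the $p$-Rayleigh quotient $\mathcal{R}_{p}(f)$ from below, uniformly over all non-zero $f\in \mathcal{F}_{\mathcal{V}}$, and then pass to the infimum. First I would reduce to non-negative functions: replacing $f$ by $|f|$ leaves the denominator $\sum_{x\in V}|f(x)|^{p}\mathfrak{m}(x)$ unchanged, while the numerator can only decrease since $\bigl||f(y)|-|f(x)|\bigr|\leq |f(y)-f(x)|$, and $|f|\in \mathcal{F}_{\mathcal{V}}$. Hence $\mathcal{R}_{p}(f)\geq \mathcal{R}_{p}(|f|)$, so it suffices to establish the stated lower bound for non-negative $f\in \mathcal{F}_{\mathcal{V}}\setminus\{0\}$.

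The heart of the argument is to apply Lemma \ref{lem:implication of co-area formula} not to $f$ but to $f^{p}$, which is again a non-negative member of $\mathcal{F}_{\mathcal{V}}$. This yields
\[
\mathcal{I}^{D}_{\mathcal{V}}\sum_{x\in V}f(x)^{p}\mathfrak{m}(x)\leq \frac{1}{2}\sum_{y,z\in V}\bigl|f(y)^{p}-f(z)^{p}\bigr|\,\mathfrak{m}_{yz}.
\]
To the right-hand side I would apply the pointwise inequality of Amghibech (Lemma \ref{lem:p-convexity}), estimating $|f(y)^{p}-f(z)^{p}|$ by $p\,|f(y)-f(z)|\,\bigl((f(y)^{p}+f(z)^{p})/2\bigr)^{1/q}$, where $p^{-1}+q^{-1}=1$. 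Then I would apply H\"older's inequality with exponents $p$ and $q$ to the resulting sum, splitting each summand as a product against the symmetric weight $\mathfrak{m}_{yz}$. The first H\"older factor becomes $\bigl(\sum_{y,z}|f(y)-f(z)|^{p}\mathfrak{m}_{yz}\bigr)^{1/p}$, and the second is $\bigl(\sum_{y,z}\tfrac{f(y)^{p}+f(z)^{p}}{2}\mathfrak{m}_{yz}\bigr)^{1/q}$. For the latter I would use the symmetry $\mathfrak{m}_{yz}=\mathfrak{m}_{zy}$ together with $\sum_{z}\mathfrak{m}_{yz}=\mathfrak{m}(y)$ (which follows from $\mathcal{P}(y,z)=\mathfrak{m}_{yz}/\mathfrak{m}(y)$ and $\sum_{z}\mathcal{P}(y,z)=1$) to evaluate it as $\bigl(\sum_{y}f(y)^{p}\mathfrak{m}(y)\bigr)^{1/q}$.

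Writing $A:=\sum_{x\in V}f(x)^{p}\mathfrak{m}(x)$ and $B:=\sum_{y,z\in V}|f(y)-f(z)|^{p}\mathfrak{m}_{yz}$, the chain of inequalities collapses to $\mathcal{I}^{D}_{\mathcal{V}}\,A\leq \tfrac{p}{2}\,B^{1/p}A^{1/q}$. Since $1-1/q=1/p$, dividing by $A^{1/q}$ and raising to the $p$-th power gives $(\mathcal{I}^{D}_{\mathcal{V}})^{p}A\leq (p/2)^{p}B$, that is $B/A\geq (2/p)^{p}(\mathcal{I}^{D}_{\mathcal{V}})^{p}$. Recalling $\mathcal{R}_{p}(f)=\tfrac{1}{2}B/A$, this is precisely $\mathcal{R}_{p}(f)\geq \tfrac{2^{p-1}}{p^{p}}(\mathcal{I}^{D}_{\mathcal{V}})^{p}$, and taking the infimum over $f$ finishes the proof. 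I expect the main technical care to lie in the bookkeeping of the exponents $p$ and $q$ in the H\"older step and in verifying that the second H\"older factor collapses to $A^{1/q}$ via the symmetry of $\mathfrak{m}_{yz}$; the reduction to non-negative $f$ and the two invocations of the preceding lemmas are routine.
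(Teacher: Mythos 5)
Your proposal is correct and follows essentially the same route as the paper's proof: apply the co-area consequence (Lemma \ref{lem:implication of co-area formula}) to $|f|^{p}$, then Amghibech's inequality (Lemma \ref{lem:p-convexity}), then H\"older with exponents $p,q$, and evaluate the second factor via the symmetry of $\mathfrak{m}_{yz}$ and $\sum_{z}\mathfrak{m}_{yz}=\mathfrak{m}(y)$. The only cosmetic difference is that you reduce to non-negative $f$ at the outset, whereas the paper keeps $|f|$ throughout and absorbs the reverse triangle inequality $\bigl||f(y)|-|f(x)|\bigr|\leq|f(y)-f(x)|$ into the chain of estimates.
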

\begin{proof}
We fix $f\in \mathcal{F}_{\mathcal{V}}\setminus \{0\}$.
We apply Lemma \ref{lem:implication of co-area formula} to a non-negative function $\vert f \vert^{p}$ which belongs to $\mathcal{F}_{\mathcal{V}}$.
Using Lemma \ref{lem:p-convexity} and the triangle inequality,
we obtain
\begin{align*}
\mathcal{I}^{D}_{\mathcal{V}} \sum_{x\in V} \vert f(x) \vert^{p} \mathfrak{m}(x) 
&\leq  \frac{1}{2}\sum_{x,y\in V} \left\vert \vert f(y)\vert^{p}-\vert f(x) \vert^{p} \right\vert\, \mathfrak{m}_{xy}\\
& \leq \frac{p}{2}\sum_{x,y\in V}  \vert \vert f(y)\vert-\vert f(x) \vert \vert \left(  \frac{\vert f(y) \vert^{p}+\vert f(x)\vert^{p}}{2}  \right)^{1/q}\mathfrak{m}_{xy} \\
& \leq  \frac{p}{2}\sum_{x,y\in V} \vert f(y)- f(x) \vert  \left(  \frac{\vert f(y)\vert^{p}+\vert f(x)\vert^{p}}{2}  \right)^{1/q} \mathfrak{m}_{xy},
\end{align*}
where $q$ is determined by $p^{-1}+q^{-1}=1$.
The H\"older inequality yields
\begin{align*}
\mathcal{I}^{D}_{\mathcal{V}} \sum_{x\in V} \vert f(x) \vert^{p} \mathfrak{m}(x)&\leq  \frac{p}{2}\sum_{x,y\in V} \vert f(y)- f(x) \vert \, \mathfrak{m}^{1/p}_{xy}\,\, \left(  \frac{\vert f(y)\vert^{p}+\vert f(x)\vert^{p}}{2}\, \mathfrak{m}_{xy} \right)^{1/q}\\
&\leq  \frac{p}{2}  \left(  \sum_{x,y\in V} \vert f(y)-f(x) \vert^{p}\, \mathfrak{m}_{xy}\right)^{1/p}  \,\, \left(\frac{1}{2} \sum_{x,y\in V}  \left(  \vert f(y)\vert^{p}+\vert f(x)\vert^{p}  \right)\,\mathfrak{m}_{xy}  \right)^{1/q}\\
&=  \frac{p}{2}  \left(  \sum_{x,y\in V} \vert f(y)-f(x) \vert^{p}\, \mathfrak{m}_{xy}\right)^{1/p}  \,\, \left(\sum_{x\in V}  \vert f(x)\vert^{p} \mathfrak{m}(x)  \right)^{1/q}.
\end{align*}
We possess
\begin{equation*}
(\mathcal{I}^{D}_{\mathcal{V}})^{p} \leq \left(\frac{p}{2}\right)^{p}  \frac{\sum_{x,y\in V} \vert f(y)-f(x) \vert^{p}\, \mathfrak{m}_{xy}}{\sum_{x\in V} \vert f(x) \vert^{p} \mathfrak{m}(x)}=\frac{p^{p}}{2^{p-1}}\mathcal{R}_{p}(f).
\end{equation*}
Thus we arrive at the desired inequality.
\end{proof}

\begin{rem}
We provide a brief historical remark on the Cheeger inequality (without boundary condition) for graphs (for more details, cf. \cite{TH} and the references therein).
Alon-Milman \cite{AM}, Alon \cite{A} established the Cheeger inequality for undirected graphs, and for the graph Laplacian.
Chung \cite{C1} extended it to the directed case.
Amghibech \cite{Am} generalized it for the graph $p$-Laplacian in the undirected case.
\end{rem}

\subsection{Dirichlet eigenvalue estimates}\label{sec:Dirichlet eigenvalue estimates}
For $x\in V$ and $R \geq 1$
we set
\begin{equation*}
E_{R}(x):=\{y\in V \mid \rho_{x}(y)\geq R\}.
\end{equation*}
We obtain the following isoperimetric inequality for $E_{R}(x)$:
\begin{prop}\label{prop:isoperimetric inequality}
Let $x\in V$.
For $K\in \mathbb{R}$
we assume $\inf_{y\in V \setminus \{x\}}\kappa(x,y) \geq K$.
For $\Lambda \in (-\infty,-1]$
we also assume $\mathcal{H}_{x}\geq \Lambda$.
For $D>0$
we further assume $\IR_{x} V\leq D$.
Then for every $R \geq 1$ with $KR+\Lambda>0$,
we have
\begin{equation*}
\mathcal{I}^{D}_{E_{R}(x)}\geq \frac{K R+\Lambda}{D}.
\end{equation*}
\end{prop}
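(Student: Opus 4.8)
The plan is to combine the Laplacian comparison of Theorem \ref{thm:Laplacian comparison} with the integration by parts formula of Proposition \ref{prop:integration by parts}, testing the latter against the indicator function of an arbitrary competitor set. First I would record a consequence of the hypotheses: since $\Lambda \leq -1 < 0$ and $KR + \Lambda > 0$, we get $KR > -\Lambda \geq 1 > 0$, and as $R \geq 1$ this forces $K > 0$. In particular $K \geq 0$, so for every $u \in E_R(x)$, where $\rho_x(u) \geq R$, Theorem \ref{thm:Laplacian comparison} gives $\mathcal{L}\rho_x(u) \geq K\rho_x(u) + \Lambda \geq KR + \Lambda$. Here $E_R(x) \subset V \setminus \{x\}$ because $R \geq 1$, so the comparison indeed applies on all of $E_R(x)$.

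Next, fix a non-empty $\Omega \subset E_R(x)$ and apply Proposition \ref{prop:integration by parts} with $f_0 = \rho_x$ and $f_1 = \mathbf{1}_\Omega$, the indicator function of $\Omega$. For $u,v \in \Omega$ one has $\mathbf{1}_\Omega(v) - \mathbf{1}_\Omega(u) = 0$, so the interior (quadratic) term of the formula vanishes, and since $\mathbf{1}_\Omega \equiv 1$ on $\Omega$ what remains is
\[
\sum_{u\in\Omega}\mathcal{L}\rho_x(u)\,\mathfrak{m}(u) = \sum_{u\in\Omega}\sum_{v\in V\setminus\Omega}\left(\rho_x(u) - \rho_x(v)\right)\mathfrak{m}_{uv}.
\]
This is the key identity: the choice of test function converts the left-hand sum of the Laplacian into exactly a sum over the edge boundary of $\Omega$, matching the definition of $\mathfrak{m}(\partial\Omega)$.

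It then remains to estimate both sides. For the left-hand side, the pointwise bound from the first step yields $\sum_{u\in\Omega}\mathcal{L}\rho_x(u)\,\mathfrak{m}(u) \geq (KR+\Lambda)\,\mathfrak{m}(\Omega)$. For the right-hand side, whenever $\mathfrak{m}_{uv} > 0$ I would use the crude bound $\rho_x(u) - \rho_x(v) \leq \rho_x(u) \leq \IR_x V \leq D$ (since $\rho_x \geq 0$ and by the inscribed radius hypothesis), which gives $\sum_{u\in\Omega}\sum_{v\in V\setminus\Omega}(\rho_x(u)-\rho_x(v))\,\mathfrak{m}_{uv} \leq D\,\mathfrak{m}(\partial\Omega)$. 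Combining the two estimates through the identity above produces $(KR+\Lambda)\,\mathfrak{m}(\Omega) \leq D\,\mathfrak{m}(\partial\Omega)$, hence $\mathfrak{m}(\partial\Omega)/\mathfrak{m}(\Omega) \geq (KR+\Lambda)/D$; taking the infimum over all non-empty $\Omega \subset E_R(x)$ yields the claimed bound $\mathcal{I}^D_{E_R(x)} \geq (KR+\Lambda)/D$.

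I expect the only genuinely delicate point to be the choice of test function in the integration by parts formula: picking $f_1 = \mathbf{1}_\Omega$ is precisely what annihilates the quadratic interior term and isolates the boundary measure $\mathfrak{m}(\partial\Omega)$. Once that is done, the two remaining estimates — positivity of $\mathcal{L}\rho_x$ on $E_R(x)$ coming from $K>0$, and the diameter bound $\rho_x \leq D$ controlling the boundary term — are routine, so the argument reads as a clean discrete analogue of a Heintze--Karcher type computation.
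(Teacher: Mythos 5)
Your proof is correct and follows essentially the same route as the paper's: apply Proposition \ref{prop:integration by parts} over $\Omega$ with $f_{0}=\rho_{x}$ (your test function $\mathbf{1}_{\Omega}$ agrees with the constant $1$ on $\Omega$, so it yields the identical boundary identity), bound the left side below by $(KR+\Lambda)\,\mathfrak{m}(\Omega)$ via Theorem \ref{thm:Laplacian comparison}, and bound the boundary sum above by $D\,\mathfrak{m}(\partial\Omega)$ using $\rho_{x}\geq 0$ and $\IR_{x}V\leq D$. Your explicit observations that $KR+\Lambda>0$ forces $K>0$ (needed for $K\rho_{x}(u)\geq KR$) and that $E_{R}(x)\subset V\setminus\{x\}$ are details the paper leaves implicit, and they are both accurate.
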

\begin{proof}
Fix a non-empty $\Omega \subset E_{R}(x)$.
From Proposition \ref{prop:integration by parts} we derive
\begin{align*}
-\sum_{y\in \Omega}\mathcal{L} \rho_{x}(y)\mathfrak{m}(y)&=\sum_{y\in \Omega} \sum_{z\in V\setminus \Omega}(\rho_{x}(z)-\rho_{x}(y))\mathfrak{m}_{yz}\\
                                                                               &\geq -\sum_{y\in \Omega} \sum_{z\in V\setminus \Omega}\rho_{x}(y)\mathfrak{m}_{yz}\geq -D\,\mathfrak{m}(\partial \Omega).
\end{align*}
On the other hand,
Theorem \ref{thm:Laplacian comparison} leads us that
for all $y\in \Omega$
\begin{equation*}
\mathcal{L} \rho_{x}(y) \geq K \rho_{x}(y)+\Lambda\geq KR+\Lambda.
\end{equation*}
Therefore,
\begin{equation*}
(K R+\Lambda)\mathfrak{m}(\Omega)\leq \sum_{y\in \Omega}\mathcal{L} \rho_{x}(y)\mathfrak{m}(y)\leq D\,\mathfrak{m}(\partial \Omega).
\end{equation*}
We complete the proof.
\end{proof}

\begin{rem}\label{rem:isop Riem graph}
On Riemannian manifolds with boundary with a lower Ricci curvature bound and a lower mean curvature bound for the boundary,
it is well-known that
one can derive a lower bound of its Dirichlet isoperimetric constant from a Laplacian comparison theorem for the distance function from the boundary, and integration by parts formula (see Proposition 4.1 in \cite{K}, Lemma 8.9 in \cite{S1}, and cf. Theorem 15.3.5 in \cite{Sh}).
Proposition \ref{prop:isoperimetric inequality} can be viewed as an analogue of such a result on manifolds with boundary (cf. Subsection \ref{sec:Laplacian comparisons}).
\end{rem}

We are now in a position to prove Theorem \ref{thm:main theorem}.
\begin{proof}[Proof of Theorem \ref{thm:main theorem}]
Let $x\in V$ and $p\in (1,\infty)$.
For $K\in \mathbb{R}$
we assume $\inf_{y\in V \setminus \{x\}}\kappa(x,y) \geq K$.
For $\Lambda \in (-\infty,-1]$
we also assume $\mathcal{H}_{x}\geq \Lambda$.
For $D>0$
we further assume $\IR_{x} V\leq D$.
Combining Propositions \ref{prop:Dirichlet Cheeger inequality} and \ref{prop:isoperimetric inequality},
we have
\begin{equation*}
\lambda^{D}_{p}(E_{R}(x))\geq \frac{2^{p-1}}{p^{p}} (\mathcal{I}^{D}_{E_{R}(x)})^{p} \geq \frac{2^{p-1}}{p^{p}} \left(\frac{K R+\Lambda}{D}\right)^{p}.
\end{equation*}
We arrive at the desired inequality (\ref{eq:main theorem}).
Thus we complete the proof of Theorem \ref{thm:main theorem}.
\end{proof}



\begin{thebibliography}{99}
\bibitem{A} N. Alon, \textit{Eigenvalues and expanders}, Theory of computing (Singer Island, Fla., 1984). Combinatorica 6 (1986), no. 2, 83--96.
\bibitem{AM} N. Alon and V. D. Milman, \textit{$\lambda_{1}$, isoperimetric inequalities for graphs, and superconcentrators}, J. Combin. Theory Ser. B 38 (1985), no. 1, 73--88.
\bibitem{Am}  S. Amghibech, \textit{Eigenvalues of the discrete $p$-Laplacian for graphs}, Ars Combin. 67 (2003), 283--302.
\bibitem{BE}  D. Bakry and M. \'Emery, \textit{Diffusions hypercontractives}, S\'eminaire de probabilit\'es, XIX, 1983/84, 177--206, Lecture Notes in Math., 1123, Springer, Berlin, 1985.
\bibitem{BCS} D. Bao, S.-S. Chern and Z. Shen, An introduction to Riemann-Finsler geometry, Graduate Texts in Mathematics, 200. Springer-Verlag, New York, 2000.
\bibitem{B} F. Bauer, \textit{Normalized graph Laplacians for directed graphs}, Linear Algebra Appl. 436 (2012), no. 11, 4193--4222.
\bibitem{BJL} F. Bauer, J. Jost and S. Liu, \textit{Ollivier-Ricci curvature and the spectrum of the normalized graph Laplace operator}, Math. Res. Lett. 19 (2012), no. 6, 1185--1205.
 \bibitem{Ba}  V. Bayle, \textit{Propri\'et\'es de concavit\'e du profil isop\'erim\'etrique et applications}, PhD Thesis, Universit\'e Joseph-Fourier-Grenoble I, 2003.
\bibitem{BRT} B. Benson, P. Ralli and P. Tetali, \textit{Volume growth, curvature, and Buser-type inequalities in graphs}, Int. Math. Res. Not. IMRN, 2019, rnz305.
\bibitem{BCLMP} D.P. Bourne, D. Cushing, S. Liu, F. M\"unch and N. Peyerimhoff, \textit{Ollivier-Ricci idleness functions of graphs}, SIAM J. Discrete Math. 32 (2018), no. 2, 1408--1424.
\bibitem{C-book} F. Chung, \textit{Spectral Graph Theory}, CBMS Regional Conference Series in Mathematics, 92. American Mathematical Society, Providence, RI, 1997.
\bibitem{C1} F. Chung, \textit{Laplacians and the Cheeger inequality for directed graphs}, Ann. Comb. 9 (2005), no. 1, 1--19.
\bibitem{C2} F. Chung, \textit{The diameter and Laplacian eigenvalues of directed graphs}, Electron. J. Combin. 13 (2006), no. 1, Note 4, 6 pp.
\bibitem{CKKLMP}  D. Cushing, S. Kamtue, J.Koolen, S. Liu, F. F. M\"unch, N. Peyerimhoff, \textit{Rigidity of the Bonnet-Myers inequality for graphs with respect to Ollivier Ricci curvature}, Adv. Math. 369 (2020), 107188.
\bibitem{EJ} M. Eidi and J. Jost, \textit{Ollivier Ricci curvature of directed hypergraphs}, preprint arXiv:1907.04727. 
\bibitem{GHJ} H. Ge, B. Hua and W. Jiang, \textit{A note on limit of first eigenfunctions of $p$-Laplacian on graphs}, preprint arXiv:1812.07915.
\bibitem{G} A. Grigor'yan, \textit{Introduction to Analysis on Graphs}, University Lecture Series, 71. American Mathematical Society, Providence, RI, 2018.
\bibitem{HK}  E. Heintze and H. Karcher, \textit{A general comparison theorem with applications to volume estimates for submanifolds}, Ann. Sci. Ecole Norm. Sup. 11 (1978), 451--470.
\bibitem{HW} B. Hua and L. Wang, \textit{Dirichlet $p$-Laplacian eigenvalues and Cheeger constants on symmetric graphs}, Adv. Math. 364 (2020), 106997.
\bibitem{JL} J. Jost, S. Liu, \textit{Ollivier's Ricci curvature, local clustering and curvature-dimension inequalities on graphs}, Discrete Comput. Geom. (2) 53 (2014), 300--322.
\bibitem{K}    A. Kasue, \textit{Applications of Laplacian and Hessian Comparison Theorems}, Geometry of geodesics and related topics (Tokyo, 1982), 333--386, Adv. Stud. Pure Math. 3, North-Holland, Amsterdam, 1984.
\bibitem{KM}  M. Keller and D. Mugnolo, \textit{General Cheeger inequalities for $p$-Laplacians on graphs}, Nonlinear Anal. 147 (2016), 80--95.
\bibitem{L}     A. Lichnerowicz, \textit{Vari\'et\'es riemanniennes \`a tenseur C non n\'egatif}, C. R. Acad. Sci. Paris S\'er. A-B 271 1970 A650--A653.
\bibitem{LLY}  Y. Lin, L. Lu and S.-T. Yau, \textit{Ricci curvature of graphs}, Tohoku Math. J. (2) 63 (2011), no. 4, 605--627.
\bibitem{LY}  Y. Lin and S.-T. Yau, \textit{Ricci curvature and eigenvalue estimate on locally finite graphs}, Math. Res. Lett. 17 (2010), no. 2, 343--356.
\bibitem{M} E. Milman, \textit{Sharp isoperimetric inequalities and model spaces for curvature-dimension-diameter condition}, J. Eur. Math. Soc. 17 (2015), 1041--1078.
\bibitem{Mo}  F. Morgan, \textit{Manifolds with density}, Notices of the AMS (2005), 853--858.
\bibitem{MW}  F. M\"unch and R. K. Wojciechowski, \textit{Ollivier Ricci curvature for general graph Laplacians: Heat equation, Laplacian comparison, non-explosion and diameter bounds}, Adv. Math. 356 (2019), 106759, 45 pp.
\bibitem{Oh1}  S. Ohta, \textit{Splitting theorems for Finsler manifolds of nonnegative Ricci curvature}, J. Reine Angew. Math. 700 (2015), 155--174.
\bibitem{Oh2}  S. Ohta, \textit{Needle decompositions and isoperimetric inequalities in Finsler geometry}, J. Math. Soc. Japan 70 (2018), no. 2, 651--693.
\bibitem{O} Y. Ollivier, \textit{Ricci curvature of Markov chains on metric spaces}, J. Funct. Anal. 256 (2009), no. 3, 810--864.
\bibitem{P} S.-H. Paeng, \textit{Volume and diameter of a graph and Ollivier's Ricci curvature}, European J. Combin. 33 (2012), no. 8, 1808--1819.
\bibitem{S1}    Y. Sakurai, \textit{Comparison geometry of manifolds with boundary under a lower weighted Ricci curvature bound}, Canad. J. Math. 72 (2020), no. 1, 243--280.
\bibitem{S}    Y. Sakurai, \textit{Concentration of $1$-Lipschitz functions on manifolds with boundary with Dirichlet boundary condition}, preprint arXiv:1712.04212v4.
\bibitem{Sh}      Z. Shen, Lectures on Finsler geometry, World Scientific Publishing Co., Singapore, 2001.
\bibitem{TH}  F. Tudisco and M. Hein, \textit{A nodal domain theorem and a higher-order Cheeger inequality for the graph $p$-Laplacian}, J. Spectr. Theory 8 (2018), no. 3, 883--908.
\bibitem{V1}   C. Villani, \textit{Topics in optimal transportation}, Graduate Studies in Mathematics, 58. American Mathematical Society, Providence, RI, 2003.
\bibitem{V2}   C. Villani, \textit{Optimal Transport: Old and New}, Springer-Verlag, Berlin, 2009.
\bibitem{Y1} T. Yamada, \textit{The Ricci curvature on directed graphs}, J. Korean Math. Soc. 56 (2019), no. 1, 113--125.
\bibitem{Y2} T. Yamada, \textit{Curvature dimension inequalities on directed graphs}, preprint arXiv:1701.01510.
\bibitem{Y}   Y. Yoshida, \textit{Cheeger inequalities for submodular transformations}, Proceedings of the Thirtieth Annual ACM-SIAM Symposium on Discrete Algorithms, 2582--2601, SIAM, Philadelphia, PA, 2019.
\end{thebibliography}
\end{document}